\documentclass[onefignum,onetabnum]{siamart190516}
\usepackage{hyperref}
\hypersetup{
  colorlinks=true,
  linkcolor=blue,
  citecolor=blue,
  urlcolor=blue
}
\usepackage{amsmath,amssymb}
\usepackage{mathrsfs}
\usepackage{graphicx}
\graphicspath{{./pic/}}
\usepackage{verbatim}
\usepackage{threeparttable,booktabs}
\usepackage{underscore}
\usepackage{diagbox}
\usepackage{multirow}
\usepackage{makecell}

\newtheorem{assumption}{Assumption}[section]

\newtheorem{example}{Example}[section]

\usepackage{dsfont}
\usepackage{epstopdf}
\usepackage{booktabs}
\usepackage{threeparttable}
\usepackage{verbatim,bm}
\usepackage{algorithm,algpseudocode}

\numberwithin{equation}{section}

\renewcommand{\bold}{\boldsymbol}
\allowdisplaybreaks

\makeatletter
\let\cref@override@label@type\@gobble   
\makeatother

\title{Dual Variational Neural Network for the $p$-Laplace Problem\thanks{The work of T. Hu is supported by National Natural Science Foundation of China (Project 125B2022). The work of G. Li is supported by Hong Kong Research Grants Council (Project 17317022). The work of Y. Xu is supported by National Natural Science Foundation of China (Projects 12250013, 12261160361 and 12271367). The work of Z. Z. is supported by by National Natural
Science Foundation of China (Project 12422117 and Project 12426312) and Hong Kong Research
Grants Council (15302323), and an internal grant of Hong Kong Polytechnic
University (Project ID: P0053938, Work Programme: 4-ZZVA)}}
\author{Tianhao Hu\thanks{Department of Mathematics, The Chinese University of Hong Kong, Shatin, N.T., Hong Kong, China. (email: \texttt{thhu@link.cuhk.edu.hk}, \texttt{fengruwang@cuhk.edu.hk})}\and Guanglian Li\thanks{Department of Mathematics, The University of Hong Kong, Pok Fu Lam Road, Hong Kong SAR, P.R. China. (\texttt{lotusli@maths.hku.hk}).} \and Fengru Wang\footnotemark[2] \and Yifeng Xu\thanks{Department of Mathematics and Scientific Computing Key Laboratory of Shanghai Universities, Shanghai Normal University, Shanghai 200234, China. (email: \texttt{yfxu@shnu.edu.cn})} \and Zhi Zhou\thanks{Department of Applied Mathematics, The Hong Kong Polytechnic University, Kowloon, Hong Kong, China. (\texttt{zhizhou@polyu.edu.hk})}}

\date{\today}

\begin{document}

\maketitle

\begin{abstract}
The reliable and accurate numerical approximation of the $p$-Laplacian is particularly challenging in the extreme regimes $p \to 1^{+}$ and $p \gg 1$, where the operator becomes either highly singular or strongly degenerate, often causing severe instability in standard numerical methods. To address these difficulties, we propose a novel deep learning based framework, termed the \emph{dual variational neural network}, for $p$-Laplace problems. The approach is based on a mixed formulation and an $\bold{L}^{q}$-based Helmholtz decomposition, which decouples the original problem into two convex subproblems: a linear Poisson problem for the irrotational component and an unconstrained minimization problem over divergence-free fields for the solenoidal component. Following the decomposition, we employ two neural networks using a gradient--curl representation to approximate the flux, and further establish an error analysis of the neural approximation. The analysis relies on fundamental vector inequalities together with tools from statistical learning theory. Numerical experiments demonstrate robust convergence of the proposed method in challenging settings, including the extreme cases $p \to 1^{+}$ and $p \gg 1$, as well as the $p(x)$-Laplace equation.
\end{abstract}
\begin{keywords}
$p$-Laplace equation, $p(x)$-Laplace equation, dual variational neural network, mixed variational formulation, divergence free, error estimate
\end{keywords}

\section{Introduction}
Let $\Omega \subset \mathbb{R}^{3}$ be a simply-connected bounded domain with a $C^{1,1}$ boundary.
For any $p\in (1,\infty)$ (with the conjugate exponent $q = \frac{p}{p-1}$), a given source $f \in L^{q}(\Omega)$ and some $g \in W^{\frac1q,p}(\partial\Omega)$, consider the following boundary value problem for the $p$-Laplacian:
\begin{align}\label{problem}
\left\{
\begin{aligned}
-\nabla \cdot \bigl( |\nabla u|^{p-2} \nabla u \bigr) &= f && \text{in } \Omega, \\
u &= g && \text{on } \partial\Omega,
\end{aligned}
\right.
\end{align}
where $|\cdot|$ denotes the Euclidean norm of vectors.
The model \eqref{problem} is fundamental for describing physical phenomena with power-law nonlinearities.
It arises naturally in shear-dependent stress fields in non-Newtonian fluids~\cite{Astarita1974, 10.1093/qjmam/27.2.193}, nonlinear dielectric response of composite materials~\cite{Otani1984, LIPTON200348}, and diffusion processes governed by constitutive relations with power-law fluxes~\cite{doi:10.1137/16M1067792}. Thus, the development of robust and efficient numerical methods for \eqref{problem} is of great practical importance.

The nonlinearity associated with the $p$-Laplacian poses a formidable challenge in the construction of accurate numerical approximations.
This is related to the pathology of the term $|\nabla u|^{p-2}$: as $p \to 1^{+}$, the model \eqref{problem} is singular since $|\nabla u|^{p-2}$ blows up near critical points of $u$, leading to the formation of sharp kinks and facets in the solution $u$, whereas as $p \to \infty$, it becomes degenerate since $|\nabla u|^{p-2}$ vanishes in regions of small gradient, leading to flat zones and steep transition layers.
These undesirable properties induce various singularities of the weak solution that adversely affect the performance of standard numerical solvers. For example, standard conforming finite element methods (FEMs) for the $p$-Laplace problem \cite{M2AN_1975__9_2_41_0,Ciarlet:1978,BarrettLiu1993} often require adaptive mesh refinement in order to effectively resolve localized solution singularities, especically for extreme $p$  \cite{LiuYan:2001,CarstensenKlose:2003,BDK:2012,LiuChen:2020}. 

In the last few years, deep learning-based approaches, e.g., physics-informed neural networks (PINNs) \cite{RAISSI2019686} and deep Ritz method (DRM) \cite{yu2018deep}, have emerged as popular meshless neural PDE solvers. However, these methods often also struggle with the $p$-Laplacian. Indeed, the strong nonlinearity of the model \eqref{problem} leads to highly complex loss landscapes, and the loss functions in PINNs and DRM lack the structural mechanisms to enforce critical physical constraints on the flux variable (the second equation of \eqref{first-order_sys}). Thus, the training process suffers from serious stagnation or divergence, especially as $p$ approaches its extreme limits.

In many applications, the primary quantity of interest is not the potential $u$, but the flux $\boldsymbol{\sigma} = -|\nabla u|^{p-2}\nabla u$.
Using the flux $\bold\sigma$, problem~\eqref{problem} can be recast as a first-order system for $(u, \boldsymbol{\sigma})$:
\begin{align}\label{first-order_sys}
\left\{
\begin{aligned}
\boldsymbol{\sigma} + |\nabla u|^{p-2}\nabla u &= \mathbf{0} && \text{in } \Omega, \\
\nabla \cdot \boldsymbol{\sigma} &= f && \text{in } \Omega, \\
u &= g && \text{on } \partial\Omega.
\end{aligned}
\right.
\end{align}
The mixed formulation facilitates the approximation of $\boldsymbol{\sigma}$ and leads to locally conservative schemes, e.g., mixed FEMs for 
the $p$-Laplacian~\cite{Farhloul:1998, Farhloul2000}. However, it involves a nonlinear saddle-point discrete system which remains numerically challenging. We employ the Helmholtz decomposition of the $\bold{L}^{q}$-based divergence space \cite{FABES1998323}, and decouple the mixed problem \eqref{first-order_sys} into two more tractable subproblems: a linear Poisson equation for the irrotational component of the flux $\boldsymbol{\sigma}$ and an unconstrained minimization over divergence-free (solenoidal) fields. These two subproblems are well-posed and admit convex formulations.
The splitting isolates the nonlinearity and provides a powerful new avenue for constructing numerical approximations. Moreover, the formulation also extends to the $p(x)$-Laplace problem under the log-H\"older continuity~\cite{diening2011lebesgue, Aramaki2022, Aramaki2022Var}. 

Building on the analytic structure, we develop a novel deep learning solver, called the Dual Variational Neural Network (DVNN), for $p$-Laplace problems. The DVNN approximates the flux $\boldsymbol{\sigma}$ by explicitly parameterizing its Helmholtz decomposition using two neural networks: one representing the potential $\phi$ to capture the irrotational part, and the other representing a vector potential $\boldsymbol{\psi}$ to capture the solenoidal part via its curl. It is a neural analogue of the gradient-curl representation~\cite{Amrouche:2011}, and preserves the divergence-free constraint on the solenoidal component. The algorithm proceeds by solving the two subproblems sequentially.
Moreover, we perform an error analysis for the DVNN approximations and establish error bounds that are explicit in terms of the architecture and the number of sampling points. The numerical experiments clearly show the robust convergence of the DVNN for extreme values of $p$ (e.g., $p=1.1$ and $p=500$) and for $p(x)$-Laplace problems. These scenarios are particularly challenging for existing numerical methods, e.g., FEM, PINNs, and DRM, but the DVNN maintains high accuracy and robust convergence. To the best of our knowledge, the DVNN is the first theoretically grounded neural solver  for $p$-Laplace problems that converges robustly for a broad range of $p$ values.

The rest of the paper is organized as follows. In Section~\ref{sec:algorithm}, we discuss the mixed variational formulation and the DVNN algorithm, and present a rigorous convergence analysis in Section~\ref{sec:error}.
In Section~\ref{sec:experiment}, we provide numerical experiments to illustrate the robustness and accuracy of the method in several challenging scenarios, including the $p(x)$-Laplace problem.
In Section~\ref{sec:conclusion}, we give a brief conclusion. In the appendix, we collect several technical and lengthy proofs of the error estimates. Throughout, the notation $c$ denotes a generic positive constant whose value may change from one occurrence to another, and $(\cdot,\cdot)$ denotes the $L^2(\Omega)$ inner product 

\section{Numerical algorithm}\label{sec:algorithm}

In this section, we develop the dual variational neural network (DVNN) based on the mixed variational formulation of problem~\eqref{first-order_sys}~\cite{Farhloul2000} and $\bold{L}^q$-based Helmholtz decomposition \cite{FABES1998323}. The approach also extends to the $p(\cdot)$-Laplace problem. 

\subsection{Mixed formulation}

Following~\cite{Farhloul2000}, we employ the two Sobolev spaces  
\begin{equation*}
\bold{X}_p\equiv\bold{W}^p({\rm div}, \Omega)=\{\boldsymbol{v}\in \bold{L}^p(\Omega), \nabla\cdot \boldsymbol{v}\in L^p(\Omega)\}, \quad  \bold{V}_p\equiv \bold{W}^p({\rm div0}, \Omega)=\{\boldsymbol{v}\in \bold{L}^p(\Omega), \nabla\cdot\boldsymbol{v}=0\},
\end{equation*}
equipped with the graph norm, and the associated trace space $W^{-\frac{1}{q},q}(\partial \Omega)$ for the normal component of functions in $\bold{X}_q$ (see, e.g.,~\cite[Theorems A.2 and A.3]{LiWangXu:2025} and~\cite[Lemma 1.2.2]{sohr2012navier}), which is also the dual space of $W^{\frac{1}{q},p}(\partial\Omega)$.
The first equation in \eqref{first-order_sys} implies $|\boldsymbol{\sigma}| = |\nabla{u}|^{p-1}$ and $ |\bold{\sigma}|^{q} = |\nabla{u}|^{pq-q} = |\nabla{u}|^p$, i.e., the formula $\nabla u = -|\bold{\sigma}|^{q-2} \bold{\sigma}$ holds.
In addition, $\|\bold{\sigma}\|_{\bold{L}^q(\Omega)}^q = \|\nabla{u}\|_{\bold{L}^p(\Omega)}^p$, i.e., $\boldsymbol{\sigma}\in \bold{L}^q(\Omega)$.
By the second equation in \eqref{first-order_sys}, $\nabla \cdot \bold{\sigma} \in L^q(\Omega)$. Hence $\bold{\sigma} \in \bold{X}_q$.
By integration by parts and Green's formula (\cite[(A.3)]{LiWangXu:2025} and~\cite[Lemma 1.2.3]{sohr2012navier}), the weak formulation of problem~\eqref{first-order_sys} is to find $(\bold{\sigma},u)\in \bold{X}_{q} \times L^p(\Omega)$ such that  
\begin{equation}\label{eqn:weakform}
\left\{
\begin{aligned}
(|\boldsymbol{\sigma}|^{q-2}\boldsymbol{\sigma},\boldsymbol{\tau})-(u,\nabla\cdot \boldsymbol{\tau})&=-\langle \boldsymbol{\tau}\cdot \boldsymbol{n}, g \rangle, \quad  \forall \bold{\tau}\in \bold{X}_{q},\\
(\nabla\cdot\boldsymbol{\sigma}, v)&=(f,v),\quad \forall v\in L^p(\Omega),
\end{aligned}\right.
\end{equation}
where $\boldsymbol{n}$ is the unit outward normal to the boundary $\partial\Omega$ and $\langle\cdot,\cdot\rangle$ denotes the duality pairing between $W^{-\frac{1}{q},q}(\partial\Omega)$ and $W^{\frac{1}{q},p}(\partial\Omega)$. The unique solvability of problem~\eqref{eqn:weakform} with $g=0$ has been proved in~\cite{Farhloul:1998, Farhloul2000}. When $g\not\equiv0$, an equivalent optimization problem reads 
\begin{equation}\label{min:origin}
\begin{aligned}
\inf_{\boldsymbol{\sigma}\in \bold{W}^q({\rm div},\Omega)} & \tfrac{1}{q}\|\boldsymbol{\sigma}\|_{L^q(\Omega)}^q + \langle \boldsymbol{\sigma}\cdot \boldsymbol{n}, g \rangle,\\
\text{s.t.}\quad &\nabla\cdot \boldsymbol{\sigma}=f,\quad\mbox{a.e. in }\Omega.
\end{aligned}
\end{equation}

\begin{theorem}\label{thm:unique-sol_min}
Problem~\eqref{min:origin} has a unique minimizer $\bold{\sigma}^\ast \in \bold{X}_q$.
\end{theorem}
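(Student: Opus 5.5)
The plan is to apply the direct method of the calculus of variations on the affine constraint set
\[
K_f := \{\boldsymbol{\sigma}\in \bold{X}_q : \nabla\cdot\boldsymbol{\sigma} = f \text{ a.e. in } \Omega\},
\]
with the functional
\[
J(\boldsymbol{\sigma}) = \tfrac1q\|\boldsymbol{\sigma}\|_{\bold{L}^q(\Omega)}^q + \langle \boldsymbol{\sigma}\cdot\boldsymbol{n}, g\rangle .
\]
Uniqueness will then come from strict convexity. The argument has four steps.

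\textbf{Step 1: the constraint set is admissible.} First I check that $K_f$ is nonempty, convex and weakly closed in $\bold{X}_q$. For nonemptiness, I solve the Dirichlet problem $\Delta w = f$ in $\Omega$, $w=0$ on $\partial\Omega$. Since $f\in L^q(\Omega)$ and $\partial\Omega$ is $C^{1,1}$, $L^q$ elliptic regularity gives $w\in W^{2,q}(\Omega)$, so $\boldsymbol{\sigma}_0:=\nabla w\in K_f$. Convexity is immediate because the constraint is affine. Weak closedness follows because $\boldsymbol{\tau}\mapsto\nabla\cdot\boldsymbol{\tau}$ is bounded linear from $\bold{X}_q$ into $L^q(\Omega)$. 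Note also that $\bold{X}_q$ is reflexive for $1<q<\infty$, since it is isometric to a closed subspace of $\bold{L}^q\times L^q$.

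\textbf{Step 2: properties of $J$ on $K_f$.} The first term of $J$ is convex, continuous and strictly convex on $\bold{L}^q$, because $|\cdot|^q$ is strictly convex for $q>1$. The boundary term is linear and continuous on $\bold{X}_q$ by the normal trace theorem, since $\boldsymbol{\tau}\mapsto\boldsymbol{\tau}\cdot\boldsymbol{n}$ is bounded into $W^{-\frac1q,q}(\partial\Omega)$. Hence $J$ is convex and continuous on $K_f$, and therefore weakly lower semicontinuous.

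\textbf{Step 3: coercivity (the main obstacle).} On $K_f$ the divergence is fixed, so
\[
\|\boldsymbol{\sigma}\|_{\bold{X}_q}\le \|\boldsymbol{\sigma}\|_{\bold{L}^q}+\|f\|_{L^q}.
\]
The trace bound gives
\[
|\langle\boldsymbol{\sigma}\cdot\boldsymbol{n},g\rangle|\le c\,\|g\|_{W^{\frac1q,p}(\partial\Omega)}\bigl(\|\boldsymbol{\sigma}\|_{\bold{L}^q}+\|f\|_{L^q}\bigr).
\]
Therefore $J(\boldsymbol{\sigma})\ge \tfrac1q\|\boldsymbol{\sigma}\|_{\bold{L}^q}^q - C\|\boldsymbol{\sigma}\|_{\bold{L}^q} - C$, which tends to $\infty$ as $\|\boldsymbol{\sigma}\|_{\bold{X}_q}\to\infty$ within $K_f$, because $q>1$. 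The delicate point is making sure the trace pairing is controlled by the full graph norm and not only by the $\bold{L}^q$ norm. This is exactly why the argument is restricted to $K_f$, where the divergence part of the norm is the constant $\|f\|_{L^q}$.

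\textbf{Step 4: existence and uniqueness.} Take a minimizing sequence in $K_f$. By Step 3 it is bounded in the reflexive space $\bold{X}_q$, so a subsequence converges weakly in $\bold{X}_q$. Its limit lies in $K_f$ by Step 1 and minimizes $J$ by the weak lower semicontinuity from Step 2, which gives existence. For uniqueness, suppose $\boldsymbol{\sigma}_1\neq\boldsymbol{\sigma}_2$ are both minimizers. Their midpoint lies in $K_f$. The linear term is affine along the segment, and $\tfrac1q\|\cdot\|_{\bold{L}^q}^q$ is strictly convex, because strict convexity of $|\cdot|^q$ gives a strict inequality on the set of positive measure where the two fields differ. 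Hence $J$ at the midpoint is strictly smaller than the common minimum, a contradiction. So the minimizer $\boldsymbol{\sigma}^\ast\in\bold{X}_q$ is unique.
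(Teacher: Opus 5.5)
Your proof is correct and follows essentially the same route as the paper: the direct method, with coercivity coming from the fixed divergence so that the normal-trace term is controlled by the $\bold{L}^q$ norm, weak lower semicontinuity from convexity and continuity, and uniqueness from strict convexity of $\tfrac1q\|\cdot\|_{\bold{L}^q(\Omega)}^q$. The differences are cosmetic. You minimize directly over the affine set $K_f$ and obtain a particular solution from the Dirichlet problem $\Delta w = f$, whereas the paper takes $\bold{\sigma}_f$ from the inf-sup condition and minimizes $\mathcal{J}(\bold{\tau}+\bold{\sigma}_f)$ over $\bold{\tau}\in\bold{V}_q$.
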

\begin{proof}
The proof is similar to~\cite[Theorem 2.1]{Farhloul:1998}. We provide a proof for completeness.
By the inf-sup condition~\cite[Proposition 2.1]{Farhloul2000}
\begin{equation*}
\inf_{v\in L^p(\Omega)}\sup_{\bold{\tau}\in \bold{X}_q}\dfrac{(\nabla \cdot \bold{\tau}, v )}{\|\bold{\tau}\|_{\bold{X}_q}\|v\|_{L^p(\Omega)}} \geq \beta>0,
\end{equation*} 
there exists a unique $\bold{\sigma}_f \in \bold{X}_q/\bold{V}_q$ satisfying $\bold{\nabla}\cdot\bold{\sigma}_f  = f$.
Thus, problem~\eqref{min:origin} can be recast into 
\begin{equation}\label{min:aux_kernel}
\inf_{\bold{\tau} \in \bold{V}_q} \left\{ \mathcal{J}(\bold{\tau} + \bold{\sigma}_{f}) := \tfrac{1}{q}\|\bold{\tau} + \boldsymbol{\sigma}_f\|_{L^q(\Omega)}^q + \langle (\bold{\tau} + \boldsymbol{\sigma}_f )\cdot \boldsymbol{n}, g \rangle\right\}.
\end{equation}
By the continuity of the normal-component trace operator $\gamma_n:\bold{X}_q\to W^{-\frac{1}{q},q}(\partial\Omega)$ (\cite[Theorem A.2]{LiWangXu:2025} and~\cite[Lemma 1.2.2]{sohr2012navier}), the triangle inequality and the fact $\nabla\cdot\bold{\tau}=0$, we have
\begin{equation*}
\mathcal{J}(\bold{\tau}+\bold{\sigma}_f) \geq \tfrac{1}{q} \left|\|\bold{\tau}\|_{\bold{L}^q(\Omega)} - \|\bold{\sigma}_f\|_{\bold{L}^q(\Omega)}\right|^q - c\|\bold{\tau}\|_{\bold{L}^q(\Omega)}\|g\|_{W^{\frac{1}{q},p}(\partial\Omega)} - c\|\bold{\sigma}_f\|_{\bold{X}_q}\|g\|_{W^{\frac{1}{q},p}(\partial\Omega)}.
\end{equation*}
Since $q>1$, $\mathcal{J}$ is coercive on $\bold{V}_q$.
Next, since $\gamma_n: \bold{X}_q\to W^{-\frac1q,q}(\partial\Omega)$ is bounded linear, it is also weakly continuous~\cite[Theorem 3.10]{Brezis:2011}.
This and the weak lower semi-continuity (w.l.s.c.) of $\|\cdot\|_{\bold{L}^q(\Omega)}$ yield the w.l.s.c. of $\mathcal{J}$.
By the direct method in calculus of variations and the strict convexity of $\mathcal{J}$, \eqref{min:aux_kernel} has a unique minimizer $\bold{\sigma}_0\in\bold{V}_q$, and it follows that $\bold{\sigma}^\ast:=\bold{\sigma}_0 + \bold{\sigma}_f$.  
\end{proof}

\subsection{Problem transformation}
In the proof of Theorem~\ref{thm:unique-sol_min}, the decomposition $\bold{X}_q=\bold{X}_q/\bold{V}_q \oplus \bold{V}_q$ plays a vital role: the unique solvability of the constraint $\nabla\cdot\bold{\sigma} + f = 0$ is posed in the quotient space $\bold{X}_q/\bold{V}_q$, but the minimizer is sought in $\bold{V}_q$. However, the constraint posed on a quotient space $ \boldsymbol{X}_q/\boldsymbol{V}_q$ is numerically impracticable, and there is no explicit formulation for functions in the space $\bold{V}_q$. Thus, we adopt an $\bold{L}^q$-based Helmholtz decomposition and split problem \eqref{min:origin} into two subproblems: a Poisson equation with $f$ being the source (see \eqref{eqn:Poisson}) and an unconstrained optimization problem posed on the kernel space of $\bold{W}^q(\mathrm{div},\Omega)$ (see \eqref{min:unconstrained}), which is further characterized by the curl of $\bold{W}^{1,q}(\Omega)$, cf. Lemma \ref{lem:represent} below. 

We use the following 
$\bold{L}^q$-based Helmholtz decomposition \cite[Theorem 11.2]{FABES1998323}.

\begin{lemma}\label{lemma:helmholtz}
$
\bold{L}^q(\Omega)=\nabla W_0^{1,q}(\Omega) \oplus \bold{W}^q({\rm div0}, \Omega).$
\end{lemma}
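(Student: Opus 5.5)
The plan is to reduce the decomposition to the well-posedness of a Dirichlet problem for the Laplacian in $W^{1,q}_0(\Omega)$ and then check that the resulting sum is direct. Given $\bold{v}\in\bold{L}^q(\Omega)$, I will look for $\phi\in W_0^{1,q}(\Omega)$ satisfying, in the weak sense,
\begin{equation*}
\Delta\phi=\nabla\cdot\bold{v}\ \text{ in } W^{-1,q}(\Omega),\qquad\text{i.e.}\qquad (\nabla\phi,\nabla w)=(\bold{v},\nabla w)\quad\forall w\in W_0^{1,p}(\Omega).
\end{equation*}
The right-hand side $w\mapsto(\bold{v},\nabla w)$ is a bounded functional on $W_0^{1,p}(\Omega)$, with norm at most $\|\bold{v}\|_{\bold{L}^q(\Omega)}$. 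Once such a $\phi$ is available, I set $\bold{w}:=\bold{v}-\nabla\phi$. The weak formulation then says exactly that $\nabla\cdot\bold{w}=0$ in the distributional sense. In particular $\nabla\cdot\bold{w}\in L^q(\Omega)$, so $\bold{w}\in\bold{W}^q({\rm div0},\Omega)$ and $\bold{v}=\nabla\phi+\bold{w}$.

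The key step, and the main obstacle, is the unique solvability of this problem together with the a priori bound
\begin{equation*}
\|\nabla\phi\|_{\bold{L}^q(\Omega)}\le c\,\|\bold{v}\|_{\bold{L}^q(\Omega)}\qquad\text{for all } q\in(1,\infty).
\end{equation*}
Equivalently, $\Delta:W_0^{1,q}(\Omega)\to W^{-1,q}(\Omega)$ must be an isomorphism. For $q=2$ this is just Lax--Milgram. For general $q$ it is the $W^{1,q}$ regularity theory for the Dirichlet Laplacian, and this is where the $C^{1,1}$ boundary assumption on $\Omega$ enters. I would not reprove it. Instead I will invoke the known result, e.g.\ \cite[Theorem 11.2]{FABES1998323}, or the classical Simader / Agmon--Douglis--Nirenberg estimates, which hold on $C^1$ domains and a fortiori on $C^{1,1}$ domains. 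I will phrase this as an inf-sup condition between $W_0^{1,q}(\Omega)$ and $W_0^{1,p}(\Omega)$ for the bilinear form $(\nabla\cdot,\nabla\cdot)$. This condition gives existence, uniqueness and stability simultaneously.

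For directness of the sum, suppose $\nabla\phi\in\bold{W}^q({\rm div0},\Omega)$ with $\phi\in W_0^{1,q}(\Omega)$. Then $\phi$ satisfies $\Delta\phi=0$ weakly with zero trace, and the uniqueness part of the isomorphism forces $\phi=0$. Hence the decomposition is unique. The bound above also shows that both projections are continuous, so the sum is a topological direct sum in $\bold{L}^q(\Omega)$.

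Finally, I will check that both summands are closed subspaces of $\bold{L}^q(\Omega)$. The gradient part is closed by the Poincaré inequality on $W_0^{1,q}(\Omega)$. The divergence-free part is closed because the distributional divergence is continuous under $\bold{L}^q$ convergence.
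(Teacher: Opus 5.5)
Your argument is correct, but it is not what the paper does. The paper gives no proof and simply quotes the decomposition from \cite[Theorem 11.2]{FABES1998323}. You instead reduce the statement to the isomorphism $\Delta: W_0^{1,q}(\Omega)\to W^{-1,q}(\Omega)$. From it you get existence by subtracting $\nabla\phi$, directness from uniqueness, and bounded projections from the a priori estimate.

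One point needs care. Your key input and the lemma are actually equivalent. By Hahn--Banach, every $F\in W^{-1,q}(\Omega)$ can be written as $\nabla\cdot\bold{v}$ with $\bold{v}\in\bold{L}^q(\Omega)$. Decomposing that $\bold{v}$ then yields surjectivity and injectivity of $\Delta$. So you should not cite \cite[Theorem 11.2]{FABES1998323} for your key step: as the paper uses it, that theorem is the decomposition itself, and the argument would become circular. The independent input you want is Simader's $W^{1,q}$ theory for the Dirichlet Laplacian on $C^1$ domains. The Agmon--Douglis--Nirenberg estimates are $W^{2,q}$ estimates needing $C^{1,1}$-type boundaries, so they are not the relevant result, and they do not ``hold on $C^1$ domains'' as you wrote.

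What your route buys is transparency. It shows exactly where boundary regularity enters and why every $q\in(1,\infty)$ is allowed. On a merely Lipschitz domain the same reduction only works for $q$ in a restricted range containing $2$. It also makes the stability of the projections explicit. The paper's one-line citation is shorter but hides all of this.
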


In view of Lemma~\ref{lemma:helmholtz}, each $\bold{\sigma} \in \bold{X}_{q}$ can be decomposed as $ \bold{\sigma} = \nabla \phi + \bold{\tau} $ with $\phi\in W_0^{1,q}(\Omega)$ and $\bold{\tau} \in \bold{V}_q$. Motivated by the constraint in \eqref{min:origin},  consider the boundary value problem  
\begin{align}\label{eqn:Poisson}
\left\{
\begin{aligned}
\Delta \phi&=f\quad \mbox{in }\Omega,\\
\phi&=0\quad \mbox{on }\partial\Omega.
\end{aligned}
\right.
\end{align}
Problem~\eqref{eqn:Poisson} has a unique solution $\phi^\ast \in W^{2,q}(\Omega)\cap W_0^{1,q}(\Omega)$ for $f\in L^q(\Omega)$~\cite[Theorem 9.15]{GT:2001}. This allows us to reformulate problem~\eqref{min:origin} as 
\begin{equation}\label{min:unconstrained}
\inf_{\bold{\tau}\in \bold{V}_q} \left\{\mathcal{J}(\bold{\tau}+\nabla \phi^\ast) = \tfrac{1}{q} \| \boldsymbol{\tau} + \nabla\phi^\ast\|_{\bold{L}^q(\Omega)}^q+\langle (\boldsymbol{\tau} + \nabla\phi^\ast) \cdot \boldsymbol{n} , g \rangle \right\}.
\end{equation}

\begin{theorem}\label{thm:equivalence}
Problem~\eqref{min:unconstrained} has a unique minimizer $\bold{\tau}^\ast \in \bold{V}_q$ with
\begin{equation}\label{est:stab}
    \|\bold{\tau}^\ast\|_{\bold{L}^q(\Omega)}\leq c(q,\Omega)\Big(\|f\|_{L^q(\Omega)}+\|g\|_{W^{\frac{1}{q},p}(\partial\Omega)}^{\frac pq}\Big).
\end{equation}
Moreover, problem~\eqref{min:unconstrained} is equivalent to problem~\eqref{min:origin} in the sense that $\bold{\sigma}^\ast = \bold{\tau}^\ast + \nabla \phi^\ast$.
\end{theorem}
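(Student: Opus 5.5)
We follow the direct-method argument from the proof of Theorem~\ref{thm:unique-sol_min}, now with the fixed shift $\nabla\phi^\ast$ in place of the abstract $\bold{\sigma}_f$.

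\textbf{Step 1: the shift is admissible and controlled by $f$.} By elliptic regularity \cite[Theorem 9.15]{GT:2001}, $\phi^\ast\in W^{2,q}(\Omega)\cap W^{1,q}_0(\Omega)$ with $\|\phi^\ast\|_{W^{2,q}(\Omega)}\le c\|f\|_{L^q(\Omega)}$. Hence $\nabla\phi^\ast\in\bold{X}_q$, $\nabla\cdot\nabla\phi^\ast=f$, and
\begin{equation*}
\|\nabla\phi^\ast\|_{\bold{X}_q}\le c\|f\|_{L^q(\Omega)} .
\end{equation*}
Consequently the map $\bold{\tau}\mapsto\bold{\tau}+\nabla\phi^\ast$ is a bijection from $\bold{V}_q$ onto the affine constraint set $\{\bold{\sigma}\in\bold{X}_q:\nabla\cdot\bold{\sigma}=f\}$. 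Indeed, if $\nabla\cdot\bold{\sigma}=f$, then $\bold{\sigma}-\nabla\phi^\ast\in\bold{V}_q$. The objective of \eqref{min:unconstrained} evaluated at $\bold{\tau}$ equals the objective of \eqref{min:origin} evaluated at $\bold{\tau}+\nabla\phi^\ast$. So the two problems are equivalent, and Theorem~\ref{thm:unique-sol_min} gives a unique minimizer $\bold{\tau}^\ast=\bold{\sigma}^\ast-\nabla\phi^\ast$. This also yields the relation $\bold{\sigma}^\ast=\bold{\tau}^\ast+\nabla\phi^\ast$. Alternatively, one could rerun the coercivity, w.l.s.c.\ and strict convexity argument directly on $\bold{V}_q$.

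\textbf{Step 2: the stability bound \eqref{est:stab}.} The plan is to compare $\mathcal{J}$ at the minimizer with $\mathcal{J}$ at $\bold{\tau}=\bold{0}$:
\begin{equation*}
\tfrac1q\|\bold{\tau}^\ast+\nabla\phi^\ast\|_{\bold{L}^q}^q+\langle(\bold{\tau}^\ast+\nabla\phi^\ast)\cdot\bold{n},g\rangle\le \tfrac1q\|\nabla\phi^\ast\|^q_{\bold{L}^q}+\langle\nabla\phi^\ast\cdot\bold{n},g\rangle .
\end{equation*}
The $\nabla\phi^\ast$ boundary terms cancel. Since $\nabla\cdot\bold{\tau}^\ast=0$, the trace bound gives $|\langle\bold{\tau}^\ast\cdot\bold{n},g\rangle|\le c\|\bold{\tau}^\ast\|_{\bold{L}^q}\|g\|$, where $\|g\|$ denotes the $W^{\frac1q,p}(\partial\Omega)$ norm. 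Setting $\bold{\sigma}^\ast=\bold{\tau}^\ast+\nabla\phi^\ast$, we obtain
\begin{equation*}
\tfrac1q\|\bold{\sigma}^\ast\|^q_{\bold{L}^q}\le \tfrac1q\|\nabla\phi^\ast\|^q_{\bold{L}^q}+c\|g\|\bigl(\|\bold{\sigma}^\ast\|_{\bold{L}^q}+\|\nabla\phi^\ast\|_{\bold{L}^q}\bigr).
\end{equation*}
Apply Young's inequality with exponents $(q,p)$ to $c\|g\|\,\|\bold{\sigma}^\ast\|_{\bold{L}^q}$, absorbing $\epsilon\|\bold{\sigma}^\ast\|^q_{\bold{L}^q}$ into the left side; this leaves a term $c\|g\|^{p}$. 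Apply Young's inequality in the same way to $c\|g\|\,\|\nabla\phi^\ast\|_{\bold{L}^q}$, producing $c\|\nabla\phi^\ast\|^q_{\bold{L}^q}+c\|g\|^p$. This gives
\begin{equation*}
\|\bold{\sigma}^\ast\|_{\bold{L}^q}^q\le c\bigl(\|\nabla\phi^\ast\|^q_{\bold{L}^q}+\|g\|^p\bigr).
\end{equation*}
Taking $q$-th roots gives $\|\bold{\sigma}^\ast\|_{\bold{L}^q}\le c\bigl(\|f\|_{L^q}+\|g\|^{p/q}\bigr)$. The triangle inequality $\|\bold{\tau}^\ast\|_{\bold{L}^q}\le\|\bold{\sigma}^\ast\|_{\bold{L}^q}+\|\nabla\phi^\ast\|_{\bold{L}^q}$ then yields \eqref{est:stab}.

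\textbf{Main obstacle.} The delicate point is the trace estimate in Step 2. The normal trace on $\bold{X}_q$ is controlled by the full graph norm, not the $\bold{L}^q$ norm alone. This is harmless for $\bold{\tau}^\ast$, since its divergence vanishes, but the $\nabla\phi^\ast$ trace term requires $\|f\|$ in the $\bold{X}_q$ norm; this is exactly why it is convenient that these terms cancel in the comparison. The remaining care is in tracking the Young exponents so that the $g$-dependence comes out exactly as $\|g\|^{p/q}$.
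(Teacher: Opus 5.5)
Your proof is correct. Step 2 is the paper's stability argument: compare with $\bold{\tau}=\bold{0}$, let the $\nabla\phi^\ast$ boundary terms cancel, bound the trace of $\bold{\tau}^\ast$ by its $\bold{L}^q$ norm using $\nabla\cdot\bold{\tau}^\ast=0$, then apply Young's inequality and $\|\nabla\phi^\ast\|_{\bold{L}^q(\Omega)}\le c\|f\|_{L^q(\Omega)}$.

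Step 1 takes a different route from the paper. You obtain existence, uniqueness and the identity $\bold{\sigma}^\ast=\bold{\tau}^\ast+\nabla\phi^\ast$ directly from Theorem~\ref{thm:unique-sol_min}, via the objective-preserving affine bijection $\bold{\tau}\mapsto\bold{\tau}+\nabla\phi^\ast$ from $\bold{V}_q$ onto the constraint set. The paper instead does two things:
\begin{enumerate}
\item it reruns the direct method on $\bold{V}_q$ to get a unique minimizer;
\item it proves the identity by Helmholtz-decomposing $\bold{\sigma}^\ast=\bold{\sigma}_0^\ast+\nabla\psi^\ast$ (Lemma~\ref{lemma:helmholtz}), identifying $\psi^\ast=\phi^\ast$ through uniqueness for \eqref{eqn:Poisson}, and comparing minimal values of $\mathcal{J}$.
\end{enumerate}
Your route is shorter. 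It only needs $\nabla\cdot(\bold{\sigma}-\nabla\phi^\ast)=0$ for every feasible $\bold{\sigma}$. It therefore avoids both the Helmholtz decomposition and the uniqueness of the Dirichlet problem in $W_0^{1,q}(\Omega)$ that the paper needs to identify the merely $W_0^{1,q}$ potential $\psi^\ast$ with $\phi^\ast$.
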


\begin{proof}
The unique solvability of problem~\eqref{min:unconstrained} can be proved similarly to Theorem~\ref{thm:unique-sol_min}. Then we prove the stability estimae \eqref{est:stab}. Since $\mathbf{0} \in \boldsymbol{V}_q$, the minimizing property gives
\begin{equation*}
\tfrac1q\|\bold{\tau}^*+\nabla \phi^\ast\|_{\bold{L}^q(\Omega)}^q+\langle (\boldsymbol{\tau}^* + \nabla\phi^\ast) \cdot \boldsymbol{n} , g \rangle\le\tfrac1q\|\nabla \phi^\ast\|_{\bold{L}^q(\Omega)}^q+\langle\nabla\phi^\ast \cdot \boldsymbol{n} , g \rangle,
\end{equation*}
By the trace theorem for $\bold{\tau}^\ast \in \bold{X}_q$ (\cite[Theorem A.2]{LiWangXu:2025}, \cite[Lemma 1.2.2]{sohr2012navier}) and $\nabla\cdot\bold{\tau}^\ast =0$, we have
\begin{equation*}
\begin{aligned}
&\tfrac1q\|\bold{\tau}^*+\nabla \phi^\ast\|_{\bold{L}^q(\Omega)}^q-c\left(\|\bold{\tau}^*+\nabla \phi^\ast\|_{\bold{L}^q(\Omega)} + \|\nabla \phi^\ast\|_{\bold{L}^q(\Omega)} \right) \|g\|_{W^{\frac{1}{q},p}(\partial\Omega)} \\
\leq &  \tfrac1q\|\bold{\tau}^*+\nabla \phi^\ast\|_{\bold{L}^q(\Omega)}^q - c \|\bold{\tau}^*\|_{\bold{L}^q(\Omega)} \leq 
\tfrac1q\|\nabla \phi^\ast\|_{\bold{L}^q(\Omega)}^q.
\end{aligned}
\end{equation*}
By Young's inequality, 
$\tfrac{1}{2q}\|\bold{\tau}^*+\nabla \phi^\ast\|_{\bold{L}^q(\Omega)}^q\le\tfrac{3}{2q}\|\nabla \phi^\ast\|_{\bold{L}^q(\Omega)}^q+c\|g\|_{W^{\frac{1}{q},p}(\partial\Omega)}^p$.
Further,  the regularity theory \cite[Lemma 9.17]{GT:2001} gives $\|\nabla\phi^*\|_{\bold{L}^{q}(\Omega)} \leq c \|f\|_{\bold{L}^q(\Omega)}$. Thus, there exists $c=c(q,\Omega)$ such that
\begin{align*}
&\|\boldsymbol{\tau}^*\|_{\bold{L}^q(\Omega)}\leq\|\bold{\tau}^*+\nabla \phi^\ast\|_{\bold{L}^q(\Omega)}+\|\nabla \phi^\ast\|_{\bold{L}^q(\Omega)}\\
\leq &c\Big(\|\nabla\phi^*\|_{\bold{L}^{q}(\Omega)}+\|g\|_{W^{\frac{1}{q},p}(\partial\Omega)}^{\frac{p}{q}}\Big)\leq c\Big(\|f\|_{L^q(\Omega)}+\|g\|_{W^{\frac{1}{q},p}(\partial\Omega)}^{\frac{p}{q}}\Big).
\end{align*}
Finally, let $\bold{\sigma}^\ast \in \bold{X}_q$ be the unique minimizer of problem \eqref{min:origin} and $\bold{\tau}^\ast\in \bold{V}_q$ be the unique minimizer of~\eqref{min:unconstrained}, respectively.
By Lemma \ref{lemma:helmholtz} again, $\bold{\sigma}^\ast$ can be expressed as $\bold{\sigma}_0^\ast + \nabla \psi^\ast$, with $\bold{\sigma}_0^\ast \in \bold{V}_q$ and $\psi^\ast \in W^{2,q}(\Omega)\cap W_0^{1,q}(\Omega)$ solving~\eqref{eqn:Poisson}.
Due to the uniqueness, $\psi^\ast = \phi^\ast$. Then by the minimizing property of $\bold{\tau}^\ast$ and $\bold{\sigma}^\ast$, we have
\begin{equation*}
\mathcal{J}(\bold{\tau}^\ast + \nabla \phi^\ast) \leq \mathcal{J}(\bold{\sigma}_0^\ast + \nabla\phi^\ast) = \mathcal{J}(\bold{\sigma}^\ast) \leq \mathcal{J}(\bold{\tau} + \nabla \phi^\ast),\quad \forall \bold{\tau}\in \bold{V}_q.
\end{equation*}
This implies  $\bold{\tau}^\ast + \nabla \phi^\ast = \bold{\sigma}^\ast$, since problems~\eqref{min:origin} and~\eqref{min:unconstrained} both have a unique minimizer.
\end{proof}

\subsection{Dual variational neural network}

By the Helmholtz decomposition in Lemma \ref{lemma:helmholtz}, problem~\eqref{min:origin} is split into a Poisson equation \eqref{eqn:Poisson} and an unconstrained minimization  problem~\eqref{min:unconstrained}, which can be solved sequentially using neural networks (NNs). Specifically, we approximate two unknowns $\phi$ and $\boldsymbol{\tau}$ by two NNs:
$\phi_{\theta}:\mathbb{R}^{3}\rightarrow\mathbb{R}$ with trainable parameters $\theta$ and $\boldsymbol{\psi}_{\eta}:\mathbb{R}^{3}\rightarrow\mathbb{R}^{3}$ with trainable parameters $\eta$, respectively. Then a neural  solution of problem \eqref{min:origin} is obtained via a gradient-curl representation:
\begin{equation}\label{eqn:flux_rep}
    \boldsymbol{\sigma}_{\theta,\eta} = \nabla\phi_{\theta} + \nabla \times \boldsymbol{\psi}_{\eta}.
\end{equation}
This construction is motivated by the Helmholtz decomposition in Lemma~\ref{lemma:helmholtz} and the following characterization of divergence-free vector fields~\cite[Lemma 4.1]{Amrouche:2011}. 
\begin{lemma}\label{lem:represent}
Let $\Omega \subset \mathbb{R}^3$ be a bounded, simply-connected domain with a $C^{1,1}$ boundary $\partial\Omega$.
A vector field $\boldsymbol{\tau} \in \bold{X}_q$ satisfies $\nabla \cdot \boldsymbol{\tau} = 0$ if and only if there exists $\boldsymbol{\psi} \in \bold{W}^{1,q}(\Omega)$ such that $\boldsymbol{\tau} = \nabla \times \boldsymbol{\psi}$.
\end{lemma}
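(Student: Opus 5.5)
The easy direction is immediate. If $\boldsymbol{\psi}\in\bold{W}^{1,q}(\Omega)$, then $\nabla\times\boldsymbol{\psi}\in\bold{L}^q(\Omega)$. For any $v\in C_c^\infty(\Omega)$,
\[
(\nabla\times\boldsymbol{\psi},\nabla v)=(\boldsymbol{\psi},\nabla\times\nabla v)=0,
\]
where the first equality follows by density of smooth fields in $\bold{W}^{1,q}(\Omega)$. Hence $\nabla\cdot(\nabla\times\boldsymbol{\psi})=0$ in the distributional sense, and $\boldsymbol{\tau}:=\nabla\times\boldsymbol{\psi}\in\bold{V}_q\subset\bold{X}_q$.

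The converse is where the work lies, and I would argue it by extension and convolution with a Newtonian potential. Let $\boldsymbol{\tau}\in\bold{X}_q$ with $\nabla\cdot\boldsymbol{\tau}=0$. Integrating by parts against constants shows that its normal trace satisfies $\langle\boldsymbol{\tau}\cdot\boldsymbol{n},1\rangle=0$ on each connected component of $\partial\Omega$. Here I would use simple connectivity, in the form that $\partial\Omega$ is connected (the relevant topological condition for the divergence problem). This zero-flux condition lets us extend $\boldsymbol{\tau}$ to a divergence-free field $\tilde{\boldsymbol{\tau}}$ on a large ball $B\supset\overline\Omega$. To do so, solve the Neumann problem
\[
\Delta\chi=0 \ \text{in } B\setminus\overline\Omega,\qquad \partial_n\chi=\boldsymbol{\tau}\cdot\boldsymbol{n} \ \text{on } \partial\Omega,\qquad \partial_n\chi=0 \ \text{on } \partial B,
\]
which is solvable because the data have zero mean. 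By $L^q$ Neumann theory on the $C^{1,1}$ annular domain, with data in $W^{-\frac1q,q}(\partial\Omega)$, we get $\chi\in W^{1,q}$. Setting $\tilde{\boldsymbol{\tau}}=\nabla\chi$ outside $\Omega$ and then extending by zero outside $B$ gives a field $\tilde{\boldsymbol{\tau}}\in\bold{L}^q(\mathbb{R}^3)$ with compact support and $\nabla\cdot\tilde{\boldsymbol{\tau}}=0$ in $\mathcal{D}'(\mathbb{R}^3)$, since the normal traces match across both interfaces.

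Next, define $\boldsymbol{w}=\Gamma*\tilde{\boldsymbol{\tau}}$, where $\Gamma$ is the fundamental solution of $-\Delta$, and set $\boldsymbol{\psi}=\nabla\times\boldsymbol{w}$. Using the vector identity
\[
\nabla\times\nabla\times\boldsymbol{w}=\nabla(\nabla\cdot\boldsymbol{w})-\Delta\boldsymbol{w},
\]
together with $\nabla\cdot\boldsymbol{w}=\Gamma*(\nabla\cdot\tilde{\boldsymbol{\tau}})=0$ and $-\Delta\boldsymbol{w}=\tilde{\boldsymbol{\tau}}$, we obtain $\nabla\times\boldsymbol{\psi}=\tilde{\boldsymbol{\tau}}$ on $\mathbb{R}^3$, and in particular $\nabla\times\boldsymbol{\psi}=\boldsymbol{\tau}$ in $\Omega$. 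For regularity, $\nabla\boldsymbol{\psi}$ consists of second derivatives of $\Gamma*\tilde{\boldsymbol{\tau}}$. By Calderón--Zygmund theory these are bounded in $\bold{L}^q(\mathbb{R}^3)$ by $c\|\tilde{\boldsymbol{\tau}}\|_{\bold{L}^q}$. Moreover, $\boldsymbol{\psi}$ is locally in $\bold{L}^q$, since $\nabla\Gamma$ is locally integrable and $\tilde{\boldsymbol{\tau}}$ has compact support. Therefore $\boldsymbol{\psi}|_\Omega\in\bold{W}^{1,q}(\Omega)$.

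The main obstacle is the extension step. It requires three things: a rigorous definition of the normal trace in $W^{-\frac1q,q}(\partial\Omega)$ (which the paper has via \cite{LiWangXu:2025,sohr2012navier}), the $L^q$ solvability of the exterior Neumann problem with rough data, and a correct use of the topological hypothesis. If $\partial\Omega$ had several components, each flux would have to vanish separately. If that route proves delicate, I would instead simply invoke the known result \cite[Lemma 4.1]{Amrouche:2011}, whose proof follows essentially this extension-plus-potential strategy.
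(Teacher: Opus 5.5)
\textbf{What is sound.} Your analytic argument is the standard proof of the result the paper does not prove but cites from \cite{Amrouche:2011}. That argument is: a zero-flux extension to a ball via a Neumann problem, then $\boldsymbol{\psi}=\nabla\times(\Gamma*\tilde{\boldsymbol{\tau}})$ with Calder\'{o}n--Zygmund bounds.

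\textbf{The gap: the topological step.} Green's formula tested with the constant $1$ only gives $\langle\boldsymbol{\tau}\cdot\boldsymbol{n},1\rangle=\int_\Omega\nabla\cdot\boldsymbol{\tau}\,{\rm d}x=0$ for the \emph{total} flux. It does not give zero flux on each component of $\partial\Omega$. Moreover, simple connectivity does not make $\partial\Omega$ connected. Take the spherical shell $\Omega=\{1<|x|<2\}$, which is simply connected with smooth boundary, together with $\boldsymbol{\tau}=x/|x|^3$. Let $\chi$ be a smooth cutoff equal to $1$ near $\{|x|=1\}$ and $0$ near $\{|x|=2\}$. Then $\nabla\cdot\boldsymbol{\tau}=0$, but $(\boldsymbol{\tau},\nabla\chi)=\int_{|x|=1}\boldsymbol{\tau}\cdot\boldsymbol{n}\,{\rm d}S=-4\pi$. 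On the other hand, $(\nabla\times\boldsymbol{\psi},\nabla\chi)=0$ for every $\boldsymbol{\psi}\in\bold{W}^{1,q}(\Omega)$: apply Stokes' theorem on the closed inner sphere for smooth $\boldsymbol{\psi}$, then pass to the limit by density. So the statement as written is false. In your construction the failure sits exactly in the Neumann step: $B\setminus\overline{\Omega}$ then contains the bounded cavity $\{|x|<1\}$, where the Neumann problem carries its own compatibility condition $\int_{|x|=1}\boldsymbol{\tau}\cdot\boldsymbol{n}\,{\rm d}S=0$, and this condition fails here.

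\textbf{What you actually prove.} Your argument establishes the corrected lemma. The equivalence holds when $\partial\Omega$ is connected, since then $B\setminus\overline{\Omega}$ is connected and the total-flux identity is the only compatibility condition for your Neumann problem. With several boundary components, it holds for divergence-free fields whose flux through each component vanishes, by solving one Neumann problem on each component of $B\setminus\overline{\Omega}$. Simple connectivity is never used; the conclusion holds on a solid torus, for instance. You should therefore replace the sentence deriving connectedness of $\partial\Omega$ from simple connectivity with an explicit connected-boundary hypothesis (or per-component zero flux). The same correction is needed in the lemma as the paper states it.
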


We employ fully connected NNs for $\phi_{\theta}$ and $\boldsymbol{\psi}_\eta$, with the tanh activation  $\rho:\mathbb{R}\to \mathbb{R}$. The NN classes to approximate $\phi$ and $\boldsymbol{\psi}$ are denoted by $\mathcal{N}_i:=\mathcal{N}_i(L,W,B)$, with $i\in\{\phi,\boldsymbol{\psi}\}$, respectively, of depth at most $L$, width at most $W$, and all parameters bounded by $B$.
First, we approximate the solution $\phi^*$ of problem~\eqref{eqn:Poisson} by minimizing the PINN loss:
\begin{equation}\label{eqn:loss1}
L_{\rm p}(\phi)=\|\Delta \phi-f\|_{L^q(\Omega)}+\lambda\left\|\phi\right\|_{W^{1,q}(\partial\Omega)},
\end{equation}
where the penalty parameter $\lambda>0$ balances the interior PDE residual and the Dirichlet boundary condition. Note that in the loss $L_{\rm p}(\phi)$, we employ the $W^{1,q}(\partial\Omega)$ norm to enforce the Dirichlet boundary condition and the $L^q(\Omega)$ for the PDE residual, which is crucial to ensure the  convergence of the NN approximations in the $W^{1,q}(\Omega)$-norm.
In practice, the integrals are approximated using the Monte Carlo method.
Let $U(\Omega)$ and $U(\partial\Omega)$ denote the uniform distributions on $\Omega$ and $\partial\Omega$, respectively.
Given independent and identically distributed (i.i.d.)\ samples $\mathbb{X}=\{X_i\}_{i=1}^{N_d}\sim U(\Omega)$ and $\mathbb{Y}=\{Y_j\}_{j=1}^{N_b}\sim U(\partial\Omega)$, we define the empirical loss
\begin{equation}\label{eqn:emploss1}
\widehat{L}_{\rm p}(\phi_\theta)=\Big(\frac{|\Omega|}{N_d}
\sum_{i=1}^{N_d}|\Delta\phi_\theta(X_i)-f(X_i)|^q\Big)^{\!\frac1q}
+\lambda\Big(\frac{|\partial\Omega|}{N_b}
\sum_{j=1}^{N_b}|\phi_\theta(Y_j)|^q+|\nabla_t\phi_\theta(Y_j)|^q\Big)^{\!\frac1q},
\end{equation}
where $\nabla_t := \nabla - \boldsymbol{n}(\boldsymbol{n} \cdot \nabla)$ denotes the tangential gradient on the boundary $\partial\Omega$.
The minimizer $\widehat{\theta}$ of the empirical loss $\widehat{L}_{\rm p}$ over $\phi_\theta\in\mathcal{N}_\phi$ yields an approximation $\phi_{\widehat{\theta}}$:
\begin{equation}\label{optimization1}
\widehat{\theta} = \operatorname*{argmin}_{\theta}\widehat{L}_{\rm p}(\phi_\theta).
\end{equation}
By replacing the solution $\phi^\ast$ by the approximation $\phi_{\widehat{\theta}}$ and substituting the representation $\boldsymbol{\tau} = \nabla \times \boldsymbol{\psi}$ into the functional $\mathcal{J}$ in~\eqref{min:unconstrained}, we obtain the loss for the solenoidal component $\boldsymbol{\psi}$:
\begin{equation}\label{eqn:loss2prac}
L_{\rm s}(\boldsymbol{\psi})=\tfrac{1}{q}\|\nabla\phi_{\widehat{\theta}}+\nabla\times\boldsymbol{\psi}\|_{\bold{L}^q(\Omega)}^q+\langle (\nabla\phi_{\widehat{\theta}}+\nabla\times\boldsymbol{\psi})\cdot \bold{n} , g \rangle.
\end{equation}
and its empirical counterpart:
\begin{equation}\label{eqn:emploss2}
\widehat{L}_{\rm s}(\boldsymbol{\psi}_\eta)=\,
\frac{|\Omega|}{qN_d}\sum_{i=1}^{N_d}\bigl|\nabla\phi_{\widehat{\theta}}(X_i)+\nabla \times \boldsymbol{\psi}_\eta(X_i)\bigr|^q +\frac{|\partial\Omega|}{N_b}\sum_{j=1}^{N_b} g(Y_j)\bigl(\nabla\phi_{\widehat{\theta}}(Y_j)+\nabla \times \boldsymbol{\psi}_\eta(Y_j)\bigr)\cdot \boldsymbol{n}.
\end{equation}
Then by minimizing the loss $\widehat{L}_{\rm s}$ over $\boldsymbol{\psi}_\eta\in\mathcal{N}_{\boldsymbol{\psi}}$, we get a NN approximation $\boldsymbol{\psi}_{\widehat{\eta}}$:
\begin{equation}\label{optimization2}
\widehat{\eta} = \operatorname*{argmin}_{\eta}\widehat{L}_{\rm s}(\boldsymbol{\psi}_\eta).
\end{equation}
The flux approximation $\boldsymbol{\sigma}_{\widehat{\theta},\widehat{\eta}}$ is given by $\boldsymbol{\sigma}_{\widehat{\theta},\widehat{\eta}} = \nabla\phi_{\widehat{\theta}} + \nabla \times \boldsymbol{\psi}_{\widehat{\eta}}.$

The overall two‑step procedure is summarized in Algorithm~\ref{alg:dualform}. By the construction, the DVNN has two advantages: (i) the Poisson solver for $\phi$ depends only on $q$; (ii) the functional $\mathcal{J}(\bold{\tau}+\nabla\phi^*)$ in the second stage remains uniformly convex and coercive on $\boldsymbol{V}_q$ for all $p\in(1,\infty)$. Moreover, the two losses in \eqref{eqn:loss1} and \eqref{eqn:loss2prac} are defined in standard Sobolev spaces and free from singular and degenerate behavior.

\begin{algorithm}[H]
\caption{The DVNN for the $p$-Laplace problem}\label{alg:dualform}
\begin{algorithmic}[1] 
\State Solve problem~\eqref{optimization1} using the empirical loss $\widehat{L}_{\rm p}$ in \eqref{eqn:emploss1}.
\State Solve problem~\eqref{optimization2} using the empirical loss $\widehat{L}_{\rm s}$ in \eqref{eqn:emploss2}.
\State Output the numerical solution $\boldsymbol{\sigma}_{\widehat{\theta},\widehat{\eta}}=\nabla\phi_{\widehat{\theta}}+\nabla\times\boldsymbol{\psi}_{\widehat{\eta}}$.
\end{algorithmic}
\end{algorithm}

\subsection{$p(\cdot)$-Laplace problem}\label{subsec:px_extension}

Heterogeneous physical systems with spatially varying nonlinear responses are naturally described by variable-exponent $p(x)$-Laplacian models, e.g., electrorheological fluids in smart materials~\cite{Rajagopal2001Mathematical} and composite materials with spatially varying microstructure~\cite{Acerbi2001Regularity}. The variable-exponent $p(x)$-Laplace problem reads
\begin{equation}\label{eq:px_problem}
\left\{\begin{aligned}
-\nabla\!\cdot\!\bigl(|\nabla u|^{p(x)-2}\nabla u\bigr)&=f\quad\text{in }\Omega,\\
u&=g\quad\text{on }\partial\Omega,
\end{aligned}\right.
\end{equation}
where the variable exponent $p:\Omega\to(1,\infty)$. To ensure the well-posedness of problem \eqref{eq:px_problem}, we assume that $p\in\mathcal{P}^{\log}_+(\Omega)$, i.e., $p$ satisfies the following log-H\"older continuity condition
\begin{equation}\label{eq:log_holder}
|p(x)-p(y)|\leq C_{\log}[{\log(e+|x-y|^{-1})}]^{-1},\quad \forall\,x,y\in\Omega
\end{equation}
with essential bounds $1<p^-\leq p(x)\leq p^+<\infty$.
Then the variable-exponent Lebesgue space $L^{p(\cdot)}(\Omega)$ equipped with the Luxemburg norm
\begin{equation*}
\|u\|_{L^{p(\cdot)}(\Omega)}:=\inf\Bigl\{\lambda>0:\int_\Omega\Bigl|\frac{u(x)}{\lambda}\Bigr|^{p(x)}\mathrm{d}x\leq 1\Bigr\}
\end{equation*}
is a separable and reflexive Banach space~\cite{diening2011lebesgue}.
The Sobolev space $W^{1,p(\cdot)}(\Omega)$, endowed with the graph norm, consists of all functions $u\in L^{p(\cdot)}(\Omega)$ with the distributional gradient $\nabla u\in \bold{L}^{p(\cdot)}(\Omega)$ and $\bold{W}^{p(\cdot)}(\mathrm{div},\Omega)$ is defined similarly.

Let $\boldsymbol{\sigma}:=-|\nabla u|^{p(x)-2}\nabla u$ be the flux and $q(x)=\frac{p(x)}{p(x)-1}$ the pointwise conjugate exponent. The mixed formulation of problem~\eqref{eq:px_problem} reads: find $(u, \boldsymbol{\sigma}) \in L^{p(\cdot)}(\Omega) \times \bold{W}^{q(\cdot)}(\text{div}, \Omega)$ such that
\begin{equation} \label{eqn:px-weakform}
\left\{
\begin{aligned}
(|\boldsymbol{\sigma}|^{q(x)-2}\boldsymbol{\sigma}, \boldsymbol{\tau} ) - (u, \nabla\cdot \boldsymbol{\tau} ) &= -\langle \boldsymbol{\tau}\cdot \boldsymbol{n}, g \rangle_{W^{-\frac{1}{q(\cdot)},q(\cdot)}(\partial\Omega), W^{\frac{1}{q(\cdot)},p(\cdot)}(\partial\Omega)}, \\
( \nabla\cdot\boldsymbol{\sigma},  v )&= (f,v)
\end{aligned}\right.
\end{equation}
for all $(v, \boldsymbol{\tau}) \in L^{p(\cdot)}(\Omega) \times \bold{W}^{q(\cdot)}(\text{div}, \Omega)$.
The constitutive relation implies $|\boldsymbol{\sigma}|^{q(x)-2}\boldsymbol{\sigma}=-\nabla u$ and $\boldsymbol{\sigma}\in\bold{L}^{q(\cdot)}(\Omega)$.
Thus the problem admits the following constrained minimization formulation
\begin{equation}\label{eq:px_constrained_min}
\inf_{\substack{\boldsymbol{\sigma}\in\bold{L}^{q(\cdot)}(\mathrm{div},\Omega)\\ \nabla\cdot\boldsymbol{\sigma}=f}}
\Bigl\{ \mathcal{J}_{p(\cdot)}(\boldsymbol{\sigma})
:=\int_\Omega \frac{|\boldsymbol{\sigma}(x)|^{q(x)}}{q(x)}\,\mathrm{d}x
+\langle \boldsymbol{\tau}\cdot \boldsymbol{n}, g \rangle_{W^{-\frac{1}{q(\cdot)},q(\cdot)}(\partial\Omega), W^{\frac{1}{q(\cdot)},p(\cdot)}(\partial\Omega)} \Bigr\}.
\end{equation}
Note that the $\bold{L}^{q(\cdot)}$-based Helmholtz decomposition remains valid under the log-H\"older continuity condition~\cite[Theorem~3.4]{Aramaki2022}: for a bounded $C^1$ domain $\Omega$, $\bold{L}^{q(\cdot)}(\Omega)=\nabla W^{1,q(\cdot)}(\Omega)\oplus \bold{W}^{q(\cdot)}_0(\mathrm{div}0,\Omega)$, where $\bold{W}^{q(\cdot)}_0(\mathrm{div}0,\Omega)$ is the $\|\cdot\|_{\bold{W}^{q(\cdot)}(\mathrm{div},\Omega)}$ closure of smooth solenoidal fields with compact support.
Thus, $\boldsymbol{\sigma}=\nabla\widetilde{\phi}+\widetilde{\boldsymbol{\tau}}$, with $\phi\in W^{1,q(\cdot)}(\Omega)$ and $\widetilde{\boldsymbol{\tau}}\in \bold{L}^{q(\cdot)}_0(\mathrm{div}0,\Omega)$.
Then $\nabla\cdot\boldsymbol{\sigma}=\Delta\widetilde{\phi}=f$. Consider the following problem
\begin{align}\label{eqn:Poissonv-variable}
\left\{\begin{aligned}
\Delta \phi&=f\quad \mbox{in }\Omega,\\
\phi&=0\quad \mbox{on }\partial\Omega.
\end{aligned}
\right.
\end{align}
Under condition \eqref{eq:log_holder}, by~\cite[Theorem 3.1]{Aramaki2022Var}, there exists a unique solution $\phi^*\in W^{2,q(\cdot)}(\Omega)\cap W_0^{1,q(\cdot)}(\Omega)$. Let $\bold{W}^{q(\cdot)}({\rm div}0,\Omega)$ be the closure of smooth solenoidal fields in $\bold{W}^{q(\cdot)}(\Omega)$. Since $\Delta(\widetilde{\phi}-\phi^*)=0$, there holds $\nabla(\widetilde{\phi}-\phi^*)\in \bold{W}^{q(\cdot)}({\rm div}0,\Omega)$. So $\boldsymbol{\sigma}=\nabla\phi^*+\nabla(\widetilde{\phi}-\phi^*)+\widetilde{\boldsymbol{\tau}}:=\nabla\phi^*+\boldsymbol{\tau}^*$ with $\boldsymbol{\tau}^*\in \bold{W}^{q(\cdot)}({\rm div}0,\Omega)$.
This decomposition enables the algorithmic decoupling: (i) solve problem \eqref{eqn:Poissonv-variable}, and (ii) minimize $\mathcal{J}_{p(\cdot)}(\nabla\phi+\boldsymbol{\tau})$ over solenoidal fields $\boldsymbol{\tau}\in\bold{W}^{q(\cdot)}(\mathrm{div}0,\Omega)$.
Moreover, any divergence‑free vector field in $\boldsymbol{W}^{q(\cdot)}(\mathrm{div},\Omega)$ can be represented as the curl of a vector potential via the Biot-Savart operator \cite{ENCISO201885}:
$\boldsymbol{\tau}(x)=\nabla\times\mathcal{BS}[\boldsymbol{\tau}](x) = \nabla\times\frac{1}{4\pi}\int_{\Omega} \frac{\boldsymbol{\tau}(y)\times (x-y)}{|x-y|^3}\,{\rm d}y$.
Since $\mathcal{BS}$ is a Calder\'{o}n–Zygmund singular integral operator, it is bounded from $\bold{W}^{q(\cdot)}(\mathrm{div},\Omega)$ to $\bold{W}^{1,q(\cdot)}(\Omega)$ \cite[Corollary 6.3.10]{diening2011lebesgue}, i.e., $\mathcal{BS}[\boldsymbol{\tau}]\in \bold{W}^{1,q(\cdot)}(\Omega)$.
This justifies the gradient‑curl representation used in the DVNN for the variable-exponent case.

The DVNN for problem \eqref{eq:px_problem} proceeds identically as Algorithm~\ref{alg:dualform}:
\begin{align*}
\widehat{L}_{\rm p}(\phi_\theta)&=\frac{|\Omega|}{N_d}\sum_{i=1}^{N_d}\!|\Delta\phi_\theta(X_i)\!-\!f(X_i)|^{q(X_i)}
+\lambda\frac{|\partial\Omega|}{N_b}\sum_{j=1}^{N_b}\left(|\phi_\theta(Y_j)|^{q(Y_j)}+|\nabla_t\phi_\theta(Y_j)|^{q(Y_j)}\right),\\
\widehat{L}_{\rm s}(\boldsymbol{\psi}_\eta)&=\frac{|\Omega|}{N_d}\sum_{i=1}^{N_d}\frac{\bigl|\nabla\phi_{\widehat{\theta}^*}(X_i)+\nabla\times\boldsymbol{\psi}_\eta(X_i)\bigr|^{q(X_i)}}{q(X_i)}
+\frac{|\partial\Omega|}{N_b}\sum_{j=1}^{N_b}g(Y_j)\bigl(\nabla\phi_{\widehat{\theta}^*}+\nabla\times\boldsymbol{\psi}_\eta\bigr)(Y_j)\!\cdot\!\boldsymbol{n}(Y_j).
\end{align*}
The structural features with the scheme for problem \eqref{min:unconstrained}, including the exact enforcement of the divergence constraint via the gradient-curl representation and robustness for extreme exponent values, carry directly over to the variable-exponent setting.
The numerical experiments in Section~\ref{sec:experiment} confirm that the method can handle sharp spatial transitions in $p(x)$.

\section{Convergence analysis}\label{sec:error}

In this section, we perform an error analysis of the DVNN for the $p$-Laplace problem following ~\cite{JiaoLai:2022cicp,doi:10.1137/23M1601195}.
The analysis deals with the two steps in Algorithm~\ref{alg:dualform} separatelyeq. Without loss of generality, we may assume $\Omega\subset(-1,1)^3$.
The first step involves the empirical PINN loss $\widehat{L}_{\rm p}$ in \eqref{eqn:emploss1}, for which we use the following error decomposition.
\begin{lemma}\label{lem:decomp}
Let $\phi^*$ be the solution of~\eqref{eqn:Poisson} and $\phi_{\widehat{\theta}}$ be the minimizer of the loss $\widehat{L}_{\rm p}$ in \eqref{eqn:emploss1} over the neural network class $\mathcal{N}_\phi$.
Then there exists $c=c(\Omega,q,\lambda)$ such that  
\begin{equation}\label{eq:error_decomp}
\|\phi^*-\phi_{\widehat{\theta}}\|_{W^{1,q}(\Omega)} \leq c\big(\inf_{\phi_{\theta} \in \mathcal{N}_\phi}\|\phi_{\theta}-\phi^*\|_{W^{2,q}(\Omega)}+\sup _{\phi_{\theta} \in \mathcal{N}_\phi}|L_{\rm p}(\phi_{\theta})-\widehat{L}_{\rm p}(\phi_{\theta})|\big).
\end{equation} 
\end{lemma}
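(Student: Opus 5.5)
The argument combines a stability (coercivity) estimate for the population loss $L_{\rm p}$ with the usual three-term splitting used in PINN analyses. The overall structure is standard. The real analytic content sits in the stability estimate, and that is where I expect the difficulty.

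\emph{Step 1: stability.} Since $\phi^\ast$ satisfies $\Delta\phi^\ast=f$ and $\phi^\ast|_{\partial\Omega}=0$, we have $L_{\rm p}(\phi^\ast)=0$. The estimate I want is
\begin{equation*}
\|\phi-\phi^\ast\|_{W^{1,q}(\Omega)}\le c\big(\|\Delta(\phi-\phi^\ast)\|_{L^q(\Omega)}+\|\phi-\phi^\ast\|_{W^{1,q}(\partial\Omega)}\big)\le c\,L_{\rm p}(\phi)\max\{1,\lambda^{-1}\}
\end{equation*}
for every smooth $\phi$, in particular for every $\phi_\theta\in\mathcal N_\phi$. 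To prove it, let $w=\phi-\phi^\ast$ and split $w=w_1+w_2$:
\begin{itemize}
\item $w_1\in W^{2,q}\cap W^{1,q}_0$ solves $\Delta w_1=\Delta w$. The $L^q$ regularity theory already used in Theorem~\ref{thm:equivalence} (\cite[Lemma 9.17]{GT:2001}) gives $\|w_1\|_{W^{1,q}}\le c\|\Delta w\|_{L^q}$.
\item $w_2$ is harmonic with trace $w|_{\partial\Omega}\in W^{1,q}(\partial\Omega)\subset W^{1-1/q,q}(\partial\Omega)$. The $L^q$ Dirichlet theory for harmonic functions on $C^{1,1}$ domains gives $\|w_2\|_{W^{1,q}(\Omega)}\le c\|w\|_{W^{1-1/q,q}(\partial\Omega)}\le c\|w\|_{W^{1,q}(\partial\Omega)}$.
\end{itemize}
This is where the $W^{1,q}(\partial\Omega)$ boundary norm in \eqref{eqn:loss1} is essential: an $L^q(\partial\Omega)$ trace alone would not control $W^{1,q}(\Omega)$.

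I expect this step to be the main obstacle, because it needs a precise reference for the $W^{1,q}$ harmonic-extension estimate with inhomogeneous Dirichlet data. My first choice is the $L^q$-theory of Fabes--Mendez--Mitrea \cite{FABES1998323}, or alternatively extension by a right inverse of the trace followed by reduction to the $W^{-1,q}$ theory for the Dirichlet Laplacian. As a fallback, I would assume $\phi_\theta|_{\partial\Omega}$ lies in $W^{2-1/q,q}(\partial\Omega)$ and use the $W^{2,q}$ theory. That suffices only if the boundary term dominates that stronger norm, so it is a genuine fallback rather than a proof of the stated estimate. The boundary part of $L_{\rm p}(\phi)$ equals $\lambda\|w\|_{W^{1,q}(\partial\Omega)}$ because $\phi^\ast=0$ there, which gives the second inequality.

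\emph{Step 2: splitting.} Fix any $\phi_\theta\in\mathcal N_\phi$. By Step 1,
\begin{equation*}
L_{\rm p}(\phi_{\widehat\theta})=[L_{\rm p}(\phi_{\widehat\theta})-\widehat L_{\rm p}(\phi_{\widehat\theta})]+[\widehat L_{\rm p}(\phi_{\widehat\theta})-\widehat L_{\rm p}(\phi_\theta)]+[\widehat L_{\rm p}(\phi_\theta)-L_{\rm p}(\phi_\theta)]+L_{\rm p}(\phi_\theta).
\end{equation*}
The second bracket is $\le0$ by the minimality \eqref{optimization1}. The first and third brackets are each bounded by $\sup_{\phi_\theta\in\mathcal N_\phi}|L_{\rm p}-\widehat L_{\rm p}|$, producing the factor $2$.

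\emph{Step 3: approximation term.} Using $L_{\rm p}(\phi^\ast)=0$ and the triangle inequality,
\begin{equation*}
L_{\rm p}(\phi_\theta)\le\|\Delta(\phi_\theta-\phi^\ast)\|_{L^q(\Omega)}+\lambda\|\phi_\theta-\phi^\ast\|_{W^{1,q}(\partial\Omega)}.
\end{equation*}
The trace theorem $W^{2,q}(\Omega)\to W^{1,q}(\partial\Omega)$ on the $C^{1,1}$ domain, applied to the first derivatives and valid since $W^{1-1/q,q}(\partial\Omega)\subset L^q(\partial\Omega)$, bounds the boundary norm by $c\|\phi_\theta-\phi^\ast\|_{W^{2,q}(\Omega)}$. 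Hence $L_{\rm p}(\phi_\theta)\le c(1+\lambda)\|\phi_\theta-\phi^\ast\|_{W^{2,q}(\Omega)}$.

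Taking the infimum over $\phi_\theta$ and combining with Step 1 yields \eqref{eq:error_decomp}, with $c$ depending on $\Omega$, $q$ and $\lambda$.
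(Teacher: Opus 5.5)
Your proposal is correct and takes essentially the same route as the paper. Both use the three-term splitting via the minimality of $\phi_{\widehat\theta}$, the trace bound $L_{\rm p}(\phi_\theta)\le c\|\phi_\theta-\phi^*\|_{W^{2,q}(\Omega)}$, and the stability estimate from splitting $\phi_{\widehat\theta}-\phi^*$ into a zero-trace Poisson part (controlled by \cite[Lemma 9.17]{GT:2001}) plus a harmonic extension. For the harmonic-extension bound you flagged as the main obstacle, the paper simply cites \cite[Theorem 5.1]{Jerison1995Inhomogeneous}, so your fallback is not needed.
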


\begin{proof}
For any $\phi_{\theta}\in \mathcal{N}_\phi$, by the minimizing property of $\phi_{\widehat{\theta}}$ to the loss $\widehat{L}_{\rm p}$ over $\mathcal{N}_\phi$, we have
\begin{align*}
L_{\rm p}(\phi_{\widehat{\theta}}) &= [L_{\rm p}(\phi_{\widehat{\theta}})-\widehat{L}_{\rm p}(\phi_{\widehat{\theta}})]+[\widehat{L}_{\rm p}(\phi_{\widehat{\theta}})-\widehat{L}_{\rm p}(\phi_{\theta})]+[\widehat{L}_{\rm p}(\phi_{\theta})-L_{\rm p}(\phi_{\theta})]+L_{\rm p}(\phi_{\theta}) \\
&\leq 2\sup_{\phi_{\theta} \in \mathcal{N}_\phi}\big|L_{\rm p}(\phi_{\theta})-\widehat{L}_{\rm p}(\phi_{\theta})\big|+L_{\rm p}(\phi_{\theta}).
\end{align*}
Since $\Delta \phi^* = f$ in $\Omega$ and $\phi^* = 0$ on $\partial\Omega$, we can rewrite the loss $L_{\rm p}$ for any $\phi_\theta\in \mathcal{N}$ as
\begin{align*}
L_{\rm p}(\phi_{\theta}) &= \|\Delta \phi_\theta-f\|_{L^q(\Omega)}+\lambda\left\|\phi_\theta\right\|_{W^{1,q}(\partial\Omega)} \\
&= \|\Delta (\phi_\theta-\phi^*)\|_{L^q(\Omega)}+\lambda\left\|\phi_\theta-\phi^*\right\|_{W^{1,q}(\partial\Omega)}.
\end{align*}
By the trace theorem, there exists $c= c(\Omega,q,\lambda) > 0$ such that
$
L_{\rm p}(\phi_{\theta}) \leq c\|\phi_\theta-\phi^*\|_{W^{2,q}(\Omega)}$.
By taking the infimum over $\phi_\theta\in\mathcal{N}_\phi $, we obtain
\begin{equation}\label{eq:loss_bound}
L_{\rm p}(\phi_{\widehat{\theta}})\leq 2\sup_{\phi_{\theta} \in \mathcal{N}_\phi}\big|L_{\rm p}(\phi_{\theta})-\widehat{L}_{\rm p}(\phi_{\theta})\big|+c\inf_{\phi_\theta\in\mathcal{N}_\phi}\|\phi_\theta-\phi^*\|_{W^{2,q}(\Omega)}.
\end{equation}
Now  consider the following two boundary value problems
\begin{align}
\left\{
\begin{aligned}
\Delta v_1&=\Delta(\phi_{\widehat{\theta}}-\phi^*),\quad \mbox{in }\Omega,\\
v_1&=0,\quad \mbox{on }\partial\Omega.
\end{aligned}
\right.\quad
\mbox{and}\quad
\left\{
\begin{aligned}
\Delta v_2&=0,\quad \mbox{in }\Omega,\\
v_2&=\phi_{\widehat{\theta}}-\phi^*,\quad \mbox{on }\partial\Omega.
\end{aligned}
\right.
\end{align}
The elliptic regularity theory~\cite[Lemma 9.17]{GT:2001} implies that there exists $c=c(\Omega)$ such that
\begin{equation*}
    \|v_1\|_{W^{2,q}(\Omega)}\leq c\|\Delta(\phi_{\widehat{\theta}}-\phi^*)\|_{L^{q}(\Omega)}.
\end{equation*}
Since $v_2$ is the harmonic extension of $\phi_{\widehat{\theta}}-\phi^*$ in $\Omega$, there exists $c=c(\Omega)$ such that \cite[Theorem 5.1]{Jerison1995Inhomogeneous}
$\|v_2\|_{W^{1,q}(\Omega)}\leq c\|\phi_{\widehat{\theta}}-\phi^*\|_{W^{1-\frac{1}{q},q}(\partial\Omega)}\leq c\|\phi_{\widehat{\theta}}-\phi^*\|_{W^{1,q}(\partial\Omega)}.$
By the triangle inequality, we deduce
\begin{equation*}
\begin{aligned}
    &\|\phi_{\widehat{\theta}}-\phi^*\|_{W^{1,q}(\Omega)}=\|v_1+v_2\|_{W^{1,q}(\Omega)}\leq\|v_1\|_{W^{2,q}(\Omega)}+\|v_2\|_{W^{1,q}(\Omega)}\\
    \leq &c\left(\|\Delta(\phi_{\widehat{\theta}}-\phi^*)\|_{L^{q}(\Omega)}+\|\phi_{\widehat{\theta}}-\phi^*\|_{W^{1,q}(\partial\Omega)}\right) = cL_{\rm p}(\phi_{\widehat{\theta}}).
\end{aligned}
\end{equation*}
Combining this with \eqref{eq:loss_bound} yields the desired assertion~\eqref{eq:error_decomp}.
\end{proof}

Next we estimate the error  $\boldsymbol{\sigma}^\ast-\bold{\sigma}_{\widehat\theta,\widehat\eta}$. In view of Theorem \ref{thm:unique-sol_min} and Lemma \ref{lem:represent}, the solutions of problems \eqref{min:origin}, \eqref{eqn:Poisson} and \eqref{min:unconstrained} satisfy $\boldsymbol{\sigma}^\ast = \nabla\phi^\ast + \boldsymbol{\tau}^\ast$ with $\boldsymbol{\tau}^\ast = \nabla\times \boldsymbol{\psi}^\ast$ and $\boldsymbol{\psi}^\ast \in \boldsymbol{W}^{1,q}(\Omega) $. Meanwhile, Algorithm \ref{alg:dualform} outputs $\boldsymbol{\sigma}_{\widehat{\theta},\widehat{\eta}}=\nabla\phi_{\widehat{\theta}}+\nabla\times\boldsymbol{\psi}_{\widehat{\eta}}$ through problems \eqref{optimization1} and \eqref{optimization2} posed over NN classes $\mathcal{N}_\phi$ and $\mathcal{N}_{\boldsymbol{\psi}}$. Thus we decompose the error $\boldsymbol{\sigma}^\ast - \boldsymbol{\sigma}_{\widehat{\theta},\widehat{\eta}}$ into
\begin{equation}\label{equ:error_decom}
    \boldsymbol{\sigma}^\ast - \boldsymbol{\sigma}_{\widehat{\theta},\widehat{\eta}} = \nabla( \phi^\ast -\phi_{\widehat{\theta}}) + \nabla \times( \boldsymbol{\psi}^\ast - \boldsymbol{\psi}_{\widehat{\eta}}). 
\end{equation}
The first term can be directly bounded using Lemma \ref{lem:decomp}. However, the loss $L_{\rm s}$ in \eqref{eqn:emploss2} uses the numerical solution $\phi_{\widehat{\theta}}$ to approximate $\phi^*$. Thus the error $\nabla \times (\boldsymbol{\psi}^\ast - \boldsymbol{\psi}_{\widehat{\eta}})$ arises not only from the NN approximation and the quadrature, but also from the loss perturbation. To deal with the latter, we employ an intermediate problem:
\begin{equation}\label{min:uncons_inter1}
\widehat{\boldsymbol{\tau}}=\mathop{{\rm argmin}}\limits_{\boldsymbol{\tau}\in \bold{W}^q({\rm div0},\Omega)}\tfrac{1}{q}\|\nabla\phi_{\widehat{\theta}}+\boldsymbol{\tau}\|_{L^q(\Omega)}^q+\langle (\nabla\phi_{\widehat{\theta}}+\boldsymbol{\tau})\cdot \boldsymbol{n} , g \rangle,
\end{equation}
with the approximation $\phi_{\widehat{\theta}}$ given by problem \eqref{optimization1}. By repeating the proof of Theorem~\ref{thm:unique-sol_min}, the unique solvability of problem \eqref{min:uncons_inter1} holds. Further, by Lemma \ref{lem:represent}, the minimizer $\widehat{\boldsymbol{\tau}}$ can be represented by $\widehat{\boldsymbol{\tau}}=\nabla\times \widehat{\boldsymbol{\psi}}$ for some $\widehat{\boldsymbol{\psi}}\in \boldsymbol{W}^{1,q} (\Omega)$ and satisfies 
\begin{equation}\label{min:loss2prac}
    L_{\rm s}(\widehat{\bold{\psi}}) \leq L_{\rm s}(\bold{\psi}),\quad \forall \bold{\psi}\in \bold{W}^{1,q}(\Omega).
\end{equation} 
Then we have the following decomposition
\begin{equation}\label{equ:error_decom_s}
        \nabla \times (\boldsymbol{\psi}^\ast -  \boldsymbol{\psi}_{\widehat{\eta}}) = \nabla \times (\boldsymbol{\psi}^\ast - \widehat{\bold{\psi}}) + \nabla\times (\widehat{\bold{\psi}} - \boldsymbol{\psi}_{\widehat{\eta}}).
\end{equation}
Using Lemma \ref{lem:represent} again, we rewrite the functional $\mathcal{J}$ on $\bold{V}_q$ in \eqref{min:unconstrained} as 
\begin{equation}\label{eqn:loss2}
L_{\rm s}^*(\boldsymbol{\psi}) = \tfrac{1}{q} \|\nabla\phi^*+\nabla\times\boldsymbol{\psi}\|_{L^q(\Omega)}^q+ \langle (\nabla\phi^*+\nabla\times\boldsymbol{\psi})\cdot \boldsymbol{n} , g \rangle,\quad \forall \bold{\psi}\in \bold{W}^{1,q}(\Omega).
\end{equation}  
Since $\bold{\sigma}^\ast = \nabla \times \bold{\psi}^\ast$ is a minimizer of problem \eqref{min:unconstrained}, $L_{\rm s}^*(\boldsymbol{\psi}^\ast)\leq L_{\rm s}^*(\widehat{\boldsymbol{\psi}})$. Actually, this inequality can be refined using sharper estimates for the $\bold{L}^q(\Omega)$-norm in Proposition \ref{prop:convexsmooth} below. The proof of Proposition \ref{prop:convexsmooth} relies on several basic inequalities for vectors \cite[Lemma 2.2]{doi:10.1137/0731022}.

\begin{lemma}\label{lemma:pinequalities}
For all \( \xi, \eta \in \mathbb{R}^d \) with \( d \geq 1 \), there exists  $c=c(p,d)$ such that
\begin{align}\label{ineq:p<2convex}
       (|\xi|^{p-2} \xi - |\eta|^{p-2} \eta) \cdot (\xi - \eta) &\geq c |\xi - \eta|^{2}(|\xi| + |\eta|)^{p-2},\quad 1<p\leq 2,\\
\label{ineq:p<2smooth}
        | |\xi|^{p-2} \xi - |\eta|^{p-2} \eta| &\leq c |\xi - \eta|^{p-1},\quad 1<p\leq 2,\\
\label{ineq:p>2convex}
        (|\xi|^{p-2} \xi - |\eta|^{p-2} \eta) \cdot (\xi - \eta) &\geq c |\xi - \eta|^{p},\quad p\geq 2,\\
\label{ineq:p>2smooth}
        | |\xi|^{p-2} \xi - |\eta|^{p-2} \eta| &\leq c |\xi - \eta|(|\xi| + |\eta|)^{p-2},\quad p\geq 2. 
\end{align}
\end{lemma}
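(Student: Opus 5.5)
The four inequalities all concern the map $F(\xi)=|\xi|^{p-2}\xi=\nabla\bigl(\tfrac1p|\xi|^p\bigr)$. I would prove them from a single integral representation along the segment joining $\eta$ and $\xi$. Set $\xi_t=\eta+t(\xi-\eta)$ for $t\in[0,1]$. For $\xi_t\neq0$,
\begin{equation*}
DF(\zeta)=|\zeta|^{p-2}\Bigl(I+(p-2)\tfrac{\zeta\otimes\zeta}{|\zeta|^2}\Bigr),
\end{equation*}
and its eigenvalues lie between $\min(1,p-1)|\zeta|^{p-2}$ and $\max(1,p-1)|\zeta|^{p-2}$. The segment meets the origin in at most one point, so $t\mapsto F(\xi_t)$ is absolutely continuous, since $|\xi_t|^{p-2}$ is integrable in $t$ for $p>1$. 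This gives
\begin{equation*}
F(\xi)-F(\eta)=\int_0^1 DF(\xi_t)(\xi-\eta)\,dt.
\end{equation*}
Consequently
\begin{equation*}
(F(\xi)-F(\eta))\cdot(\xi-\eta)\ge \min(1,p-1)\,|\xi-\eta|^2 I,\qquad |F(\xi)-F(\eta)|\le \max(1,p-1)\,|\xi-\eta|\, I,
\end{equation*}
where $I:=\int_0^1|\xi_t|^{p-2}\,dt$. The whole proof therefore reduces to the two-sided bound
\begin{equation*}
c_1(|\xi|+|\eta|)^{p-2}\le I\le c_2(|\xi|+|\eta|)^{p-2},
\end{equation*}
valid for all $p>1$ and all $\xi,\eta$ not both zero; the case $\xi=\eta=0$ is trivial.

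Establishing this two-sided bound on $I$ is the main obstacle. For $p\ge2$ the upper bound is immediate from $|\xi_t|\le|\xi|+|\eta|$. For the lower bound, assume without loss of generality $|\xi|\ge|\eta|$. For $t\in[3/4,1]$ we have $|\xi_t|\ge|\xi|-\tfrac14|\xi-\eta|\ge\tfrac12|\xi|\ge\tfrac14(|\xi|+|\eta|)$. For $1<p<2$ the roles reverse: the lower bound is immediate from the same inequality $|\xi_t|\le|\xi|+|\eta|$, since now the exponent is negative. The upper bound is the delicate part, because $|\xi_t|^{p-2}$ is singular. Here I would use that $|\xi_t|\ge\operatorname{dist}$ of the point $\eta+t(\xi-\eta)$ to $0$, which is at least $|t-t_0|\,|\xi-\eta|$, where $t_0$ is the parameter of the closest point. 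This gives
\begin{equation*}
I\le\int_0^1|t-t_0|^{p-2}\,dt\;|\xi-\eta|^{p-2}\le \tfrac{2}{p-1}\,|\xi-\eta|^{p-2}.
\end{equation*}
This bound suffices when $|\xi-\eta|\ge\tfrac12|\xi|$, which is comparable to $|\xi|+|\eta|$. Otherwise $|\xi_t|\ge|\xi|-|\xi-\eta|\ge\tfrac12|\xi|$ for every $t$, and the bound follows directly.

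With the two-sided bound on $I$ in hand, the four inequalities follow in turn.
\begin{itemize}
\item Inequality \eqref{ineq:p<2convex} follows from the lower bound on $I$ for $1<p\le2$.
\item Inequality \eqref{ineq:p>2smooth} follows from the upper bound on $I$ for $p\ge2$.
\item Inequality \eqref{ineq:p<2smooth} uses the upper bound $I\le c(|\xi|+|\eta|)^{p-2}$ together with $|\xi-\eta|\le|\xi|+|\eta|$; since $p-2\le0$, these combine to $(|\xi|+|\eta|)^{p-2}\le|\xi-\eta|^{p-2}$, which gives $|F(\xi)-F(\eta)|\le c|\xi-\eta|^{p-1}$.
\item Inequality \eqref{ineq:p>2convex} uses the lower bound on $I$ combined with $(|\xi|+|\eta|)^{p-2}\ge|\xi-\eta|^{p-2}$, now valid because $p-2\ge0$.
\end{itemize}
The constants obtained depend only on $p$, and in fact not on $d$.
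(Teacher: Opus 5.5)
Your proof is correct. The paper does not actually prove this lemma. It imports all four inequalities from \cite[Lemma 2.2]{doi:10.1137/0731022} and uses them only as a black box in Proposition~\ref{prop:convexsmooth}, with $p$ replaced by $q$. So your argument is a genuinely different, self-contained route.

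The representation $F(\xi)-F(\eta)=\int_0^1 DF(\xi_t)(\xi-\eta)\,{\rm d}t$, together with the spectral bounds on the symmetric matrix $DF$, reduces everything to the single equivalence $I\simeq(|\xi|+|\eta|)^{p-2}$. Your case analysis for that equivalence is sound: for $p\ge 2$ you restrict to $t\in[3/4,1]$, and for $1<p<2$ you compare $|\xi_t|\ge|t-t_0|\,|\xi-\eta|$ with $|\xi_t|\ge\frac12|\xi|$. Your justification of absolute continuity across the possible zero of $\xi_t$ is also sound. What this buys over the citation is explicit constants that depend on $p$ alone: $p-1$ in \eqref{ineq:p<2convex} and \eqref{ineq:p>2smooth}, and $4^{1-p}$ in \eqref{ineq:p>2convex}. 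In particular, the dependence on $d$ allowed in the statement is superfluous.

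One refinement is worth knowing, because the paper applies \eqref{ineq:p<2smooth} with exponent $q=p/(p-1)\to1^+$ in its headline regime $p\gg1$. Routing \eqref{ineq:p<2smooth} through $I$ gives a constant of order $(p-1)^{-1}$. That blow-up is an artifact of the route, even though $I$ itself really does behave like $(p-1)^{-1}(|\xi|+|\eta|)^{p-2}$, as the choice $\eta=-\xi$ shows. In your far case $|\xi-\eta|\ge\frac12|\xi|$, you can bound directly $|F(\xi)-F(\eta)|\le|\xi|^{p-1}+|\eta|^{p-1}\le 2^{p}|\xi-\eta|^{p-1}$. Your near case already gives $|F(\xi)-F(\eta)|\le|\xi-\eta|^{p-1}$. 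Together these yield a constant bounded uniformly in $p\in(1,2]$. By contrast, the factor $p-1$ in \eqref{ineq:p<2convex} cannot be removed: taking $\eta=(1+\delta)\xi$ with $\delta\to0$ forces $c\lesssim(p-1)2^{2-p}$.
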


\begin{proposition}\label{prop:convexsmooth}
let
$S(\bold{u},\bold{v}):=\int_{0}^1 (\|\bold{v}\|_{\bold{L}^q(\Omega)}^q + \|\bold{v} + t(\bold{u}-\bold{v})\|_{\bold{L}^q(\Omega)}^q )^{1-\frac{2}{q}} t {\rm d}t$ for any $ \bold{u},\bold{v}\in \bold{L}^q(\Omega)$.
Then the following statements hold.  
\begin{itemize}
    \item when $1<q<2$, 
\begin{subequations}
    \begin{equation}\label{est:q<2convex}
    \tfrac{1}{q}\|\bold{u}\|_{\bold{L}^q(\Omega)}^q \ge \tfrac{1}{q}\|\bold{v}\|_{\bold{L}^q(\Omega)}^q + (|\bold{v}|^{q-2}\bold{v}, \bold{u}-\bold{v})  + c(q)  S(\bold{u},\bold{v}) \|\bold{u}-\bold{v}\|_{\bold{L}^q(\Omega)}^2, 
        \end{equation}
    \begin{equation}\label{est:q<2smooth}
            \tfrac{1}{q}\|\bold{u}\|_{\bold{L}^q(\Omega)}^q \le \tfrac{1}{q}\|\bold{v}\|_{\bold{L}^q(\Omega)}^q + ( |\bold{v}|^{q-2}\bold{v} ,\bold{u}-\bold{v} ) + c(q) \|\bold{u}-\bold{v}\|_{\bold{L}^q(\Omega)}^q;
        \end{equation}
    \end{subequations}  
    \item when $q\geq 2$, 
\begin{subequations}
\begin{equation}\label{est:q>=2convex}
    \tfrac{1}{q}\|\bold{u}\|_{\bold{L}^q(\Omega)}^q \ge \tfrac{1}{q}\|\bold{v}\|_{\bold{L}^q(\Omega)}^q + (|\bold{v}|^{q-2}\bold{v},\bold{u}-\bold{v} )  + c(q) \|\bold{u}-\bold{v}\|_{\bold{L}^q(\Omega)}^q,
        \end{equation}
\begin{equation}\label{est:q>=2smooth}
             \tfrac{1}{q}\|\bold{u}\|_{\bold{L}^q(\Omega)}^q \le \tfrac{1}{q}\|\bold{v}\|_{\bold{L}^q(\Omega)}^q + ( |\bold{v}|^{q-2}\bold{v}, \bold{u}-\bold{v} )+ c(q)S(\bold{u},\bold{v})\|\bold{u}-\bold{v}\|_{\bold{L}^q(\Omega)}^2.
        \end{equation}
    \end{subequations}
\end{itemize}
\end{proposition}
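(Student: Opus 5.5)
The plan is a Taylor expansion with integral remainder along the segment $\bold{w}(t)=\bold{v}+t(\bold{u}-\bold{v})$, $t\in[0,1]$, combined with Lemma~\ref{lemma:pinequalities} applied pointwise with $p$ replaced by $q$. Set $F(\bold{w})=\tfrac1q\|\bold{w}\|_{\bold{L}^q(\Omega)}^q$ and $\bold{e}=\bold{u}-\bold{v}$. Since $F$ is $C^1$ on $\bold{L}^q(\Omega)$ with derivative $\bold{w}\mapsto(|\bold{w}|^{q-2}\bold{w},\cdot)$, we have
\begin{equation*}
F(\bold{u})-F(\bold{v})-(|\bold{v}|^{q-2}\bold{v},\bold{e})=\int_0^1\big(|\bold{w}(t)|^{q-2}\bold{w}(t)-|\bold{v}|^{q-2}\bold{v},\,\bold{e}\big)\,{\rm d}t .
\end{equation*}
Pointwise, $\bold{w}(t)-\bold{v}=t\bold{e}$, so the integrand equals $t^{-1}\int_\Omega(|\bold{w}|^{q-2}\bold{w}-|\bold{v}|^{q-2}\bold{v})\cdot(\bold{w}-\bold{v})\,{\rm d}x$. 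This is exactly the form to which \eqref{ineq:p<2convex} and \eqref{ineq:p>2convex} apply. For the upper bounds, apply \eqref{ineq:p<2smooth} and \eqref{ineq:p>2smooth} with Hölder's inequality instead.

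\textbf{The two "easy" estimates.}
\begin{itemize}
\item For \eqref{est:q>=2convex}, use \eqref{ineq:p>2convex}: the integrand is at least $c\,t^{-1}\|t\bold{e}\|^q_{\bold{L}^q}=c\,t^{q-1}\|\bold{e}\|^q_{\bold{L}^q}$. Integrating in $t$ gives $c(q)\|\bold{e}\|^q$.
\item For \eqref{est:q<2smooth}, use \eqref{ineq:p<2smooth}: the integrand is at most $\int_\Omega c|t\bold{e}|^{q-1}|\bold{e}|\,{\rm d}x=c\,t^{q-1}\|\bold{e}\|_{\bold{L}^q}^q$. Integrating again gives $c(q)\|\bold{e}\|^q$.
\end{itemize}

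\textbf{The two estimates involving $S$.} For \eqref{est:q<2convex}, \eqref{ineq:p<2convex} yields the integrand lower bound
\begin{equation*}
c\,t\int_\Omega|\bold{e}|^2(|\bold{w}|+|\bold{v}|)^{q-2}\,{\rm d}x ,
\end{equation*}
with $q-2<0$. Convert it to a norm via the reverse Hölder trick. Write $|\bold{e}|^q=\big(|\bold{e}|^2(|\bold{w}|+|\bold{v}|)^{q-2}\big)^{q/2}(|\bold{w}|+|\bold{v}|)^{(2-q)q/2}$ and apply Hölder with exponents $2/q$ and $2/(2-q)$:
\begin{equation*}
\|\bold{e}\|_{\bold{L}^q}^{2}\le\Big(\int_\Omega|\bold{e}|^2(|\bold{w}|+|\bold{v}|)^{q-2}\Big)\,\Big(\int_\Omega(|\bold{w}|+|\bold{v}|)^q\Big)^{(2-q)/q}.
\end{equation*}
Next use $\int(|\bold{w}|+|\bold{v}|)^q\le 2^{q-1}(\|\bold{w}\|^q+\|\bold{v}\|^q)$. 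Since $(2-q)/q>0$, this gives
\begin{equation*}
\int_\Omega|\bold{e}|^2(|\bold{w}|+|\bold{v}|)^{q-2}\ge c\,(\|\bold{v}\|^q+\|\bold{w}(t)\|^q)^{1-\frac2q}\|\bold{e}\|^2 .
\end{equation*}
Multiplying by $t$ and integrating over $t$ produces exactly $c(q)S(\bold{u},\bold{v})\|\bold{e}\|^2$.

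For \eqref{est:q>=2smooth}, \eqref{ineq:p>2smooth} bounds the integrand above by
\begin{equation*}
c\,t\int_\Omega|\bold{e}|^2(|\bold{w}|+|\bold{v}|)^{q-2}\,{\rm d}x .
\end{equation*}
Now $q-2\ge0$, so ordinary Hölder with exponents $q/2$ and $q/(q-2)$ gives $\le\|\bold{e}\|^2\|(|\bold{w}|+|\bold{v}|)\|^{q-2}_{\bold{L}^q}\le c(\|\bold{w}\|^q+\|\bold{v}\|^q)^{1-\frac2q}\|\bold{e}\|^2$. Integrating in $t$ yields $S$. When $q=2$ the exponent vanishes and everything is trivial.

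\textbf{Main obstacle.} The delicate step is the reverse Hölder argument for \eqref{est:q<2convex}. The weight $(|\bold{w}|+|\bold{v}|)^{q-2}$ is singular where both fields vanish, so the Hölder step should be carried out with the convention that the integrand is $0$ where $\bold{e}=0$. The degenerate case $\|\bold{w}\|+\|\bold{v}\|=0$ is trivial. A second point to check is the justification of the Gateaux differentiability and the chain rule for $F$ along the segment; this is standard via dominated convergence, since $|\bold{w}|^{q-1}\in L^{q/(q-1)}$.
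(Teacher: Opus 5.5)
Your proposal is correct and follows essentially the same route as the paper: the fundamental theorem of calculus along $\bold{v}+t(\bold{u}-\bold{v})$, the pointwise inequalities of Lemma~\ref{lemma:pinequalities} with $p$ replaced by $q$, and H\"older's inequality with exponents $\frac{2}{q},\frac{2}{2-q}$ (for $1<q<2$) and $\frac{q}{2},\frac{q}{q-2}$ (for $q\geq 2$) to produce the factor $S(\bold{u},\bold{v})$. One small difference is in your favor: your bound $(a+b)^q\le 2^{q-1}(a^q+b^q)$ is valid, whereas the paper's stated constant $\sqrt{2}$ in the $1<q<2$ case is not quite right for $q>1$, although this only changes $c(q)$.
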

\begin{proof}
Let \(\bold{w} = \bold{u} - \bold{v}\) and $\phi(t)=\tfrac{1}{q}\|\bold{v}+t\bold{w}\|^q_{\bold{L}^q(\Omega)}$ for $t\in[0,1]$. By the chain rule, there holds $\phi'(t)=(|\bold{v}+t\bold{w}|^{q-2} (\bold{v}+t\bold{w}), \bold{w})$.  The fundamental theorem of calculus implies  
\begin{align}
    &\tfrac{1}{q}\|\bold{u}\|_{\bold{L}^q(\Omega)}^q - \tfrac{1}{q}\|\bold{v}\|_{\bold{L}^q(\Omega)}^q  =  \phi'(0) + \int_{0}^1 (\phi'(t) - \phi'(0)){\rm d}t
   =(|\bold{v}|^{q-2} \bold{v}, \bold{w}) + \int_0^1 {\rm I}(t)\,{\rm d}t,\label{lem:convexsmooth_pf01}
    \end{align}
with ${\rm I}(t)= (|\bold{v}+t\bold{w}|^{q-2} (\bold{v}+t\bold{w})-|\bold{v}|^{q-2} \bold{v}, \bold{w})$. It remains to suitably bound the term ${\rm I}(t)$.
In the case $1<q<2$, by the estimate \eqref{ineq:p<2convex}, we have
\[
    {\rm I}(t)  \geq c(q)t(|\bold{w}|^2,|\bold{v}| + |\bold{v}+t\bold{w}|)^{q-2}.
\]
By the triangle inequality, $|\bold{w}|^2(|\bold{v}|+|\bold{v}+t\bold{w}|)^{-(2-q)} \leq t^{-(2-q)}|\bold{w}|^q$. Hence $\int_{0}^1\int_\Omega |\bold{w}|^2(|\bold{v}|+|\bold{v}+t\bold{w}|)^{-(2-q)}   {\rm d}x {\rm d}t  < \infty$. By H\"older's inequality (with exponents $\frac{2}{q}$ and $\frac{2}{2-q}$), there holds
\begin{align*}
\|\bold{w}\|_{\bold{L}^q(\Omega)}^q=&( |\bold{w}|^q(|\bold{v}|+|\bold{v}+t\bold{w}|)^{-(2-q)\frac{q}{2}},(|\bold{v}|+|\bold{v}+t\bold{w}|)^{(2-q)\frac{q}{2}})\\
\leq &\||\bold{w}|^2(|\bold{v}|+|\bold{v}+t\bold{w}|)^{-(2-q)}\|_{L^1(\Omega)}^{\frac{q}{2}}\||\bold{v}|+|\bold{v}+t\bold{w}|\|_{L^q(\Omega)}^{\frac{q(2-q)}{2}}.
\end{align*}
Collecting the last two estimates and using the inequalities $(a+b)^{q}\leq\sqrt{2}(a^2+b^2)^{\frac{q}{2}}\leq \sqrt{2}(a^{q}+b^{q})$ for any $a,b\geq 0$ ($\frac{q}{2}\in (0,1)$), we obtain
\begin{align*}
\|\bold{w}\|_{\bold{L}^q(\Omega)}^q\leq& \big(\tfrac{1}{c(q)t}{\rm I}(t) \big)^\frac{q}{2}\big(\|\sqrt{2}(|\bold{v}|^q+|\bold{v}+t\bold{w}|^q)\|_{L^1(\Omega)})^{\frac{2-q}{2}},
\end{align*}
i.e., 
$c(q) t( \|\bold{v}\|_{\bold{L}^q(\Omega)}^q + \|\bold{v} + t\bold{w}\|_{\bold{L}^q(\Omega)}^q )^{1-\frac{2}{q}}  \|\bold{w}\|^2_{\bold{L}^q(\Omega)} \leq {\rm I}(t).$ Now integrating in \(t\) over $(0,1)$ in \eqref{lem:convexsmooth_pf01} yields \eqref{est:q<2convex}.
Next, by the estimate \eqref{ineq:p<2smooth}, we have
\[
   {\rm I}(t) \le \|c(q)|t\bold{w}|^{q-1}|\bold{w}|\|_{L^1(\Omega)}= c(q)t^{q-1}\|\bold{w}\|_{\bold{L}^q(\Omega)}^q,
\]
which implies \eqref{est:q<2smooth} after integrating in $t$ over $(0,1)$ in \eqref{lem:convexsmooth_pf01}. When $q\geq 2$,  \eqref{ineq:p>2convex} implies
\[
   {\rm I}(t) \ge \|c(q)\tfrac{1}{t}|t\bold{w}|^{q}\|_{L^1(\Omega)}= c(q)t^{q-1}\|\bold{w}\|_{\bold{L}^q(\Omega)}^q.
\] 
This directly leads to \eqref{est:q>=2convex}. Finally, by the estimate \eqref{ineq:p>2smooth}, we get
\[
    {\rm I}(t)  \le c(q) t\|\bigl(|\bold{v}+t\bold{w}|+|\bold{v}|\bigr)^{q-2}|\bold{w}|^2\|_{L^1(\Omega)}.
\]
Hölder's inequality (with exponents \(\frac{q}{q-2}\) and \(\frac{q}{2}\)) and  $(a+b)^q\leq 2^{q-1}(a^q+b^q)$ with $q>1$ leads to
\begin{align*}
    &\| \bigl(|\bold{v}+t\bold{w}|+|\bold{v}|\bigr)^{q-2}|\bold{w}|^2\|_{L^1(\Omega)}
\le \big(\| |\bold{v}+t\bold{w}|+|\bold{v}|\|_{L^q(\Omega)}\big)^{{q-2}}\|\bold{w}\|_{\bold{L}^q(\Omega)}^{2}\\
\leq& 2^{(q-1)(1-\frac{2}{q})}(\|\bold{v}+t\bold{w}\|_{\bold{L}^q(\Omega)}^{q}+\|\bold{v}\|_{\bold{L}^q(\Omega)}^{q})^{1-\frac{2}{q}}\|\bold{w}\|_{\bold{L}^q(\Omega)}^2. 
\end{align*}
The last two estimates and integrating in $t$ over $(0,1)$ complete the proof of the proposition.
\end{proof}

Let $\nabla\times\boldsymbol{\psi}_{\eta^*}$ denote the best approximation of $\boldsymbol{\tau}^*$ over $\mathcal{N}_{\boldsymbol{\psi}}$ in the  $\bold{L}^q(\Omega)$-norm:
\begin{equation}\label{min:best-approximation}
\boldsymbol{\psi}_{\eta^*}=\operatorname*{argmin}_{\boldsymbol{\psi}_\eta\in\mathcal{N}_{\boldsymbol{\psi}}}\|\nabla\times\boldsymbol{\psi}_{\eta}-\boldsymbol{\tau}^*\|_{\bold{L}^q(\Omega)}.
\end{equation}
Note that problem \eqref{min:best-approximation} attains its minimum since $\|\nabla\times\boldsymbol{\psi}_{\eta}-\boldsymbol{\tau}^*\|_{\bold{L}^q(\Omega)}$ is continuous in $\eta$, whose components are bounded by $B$.
In the analysis below, we impose the following condition.
\begin{assumption}\label{assumption1}
$\|\phi_{\widehat{\theta}}-\phi^*\|_{W^{1,q}(\Omega)} \leq 1$, $ \|\nabla\times\boldsymbol{\psi}_{\eta^*}-\boldsymbol{\tau}^*\|_{\bold{L}^q(\Omega)}\leq 1$ and $ \sup_{\boldsymbol{\psi}_{\eta} \in \mathcal{N}_{\boldsymbol{\psi}}}\big|L_{\rm s}(\boldsymbol{\psi}_{\eta})-\widehat{L}_{\rm s}(\boldsymbol{\psi}_{\eta})\big|\leq 1$.
\end{assumption}
\begin{proposition}\label{prop:bound}
Under Assumption \ref{assumption1}, with $c=c(q,f,g,\Omega)$, there holds
\begin{equation*}
    \max\{\|\nabla\phi_{\widehat{\theta}}\|_{\bold{L}^q(\Omega)},\|\nabla\times\widehat{\boldsymbol{\psi}}\|_{\bold{L}^q(\Omega)}, \|\nabla\times\boldsymbol{\psi}_{\eta^*}\|_{\bold{L}^q(\Omega)},\|\nabla\times\bold{\psi}_{\widehat{\eta}}\|_{\bold{L}^q(\Omega)},\|\nabla\phi^*\|_{\bold{L}^q(\Omega)},\|\boldsymbol{\tau}^*\|_{\bold{L}^q(\Omega)}\} \leq c.
\end{equation*}
\end{proposition}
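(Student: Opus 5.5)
The six quantities split into \emph{exact} ones ($\nabla\phi^*$, $\boldsymbol{\tau}^*$), which are bounded by the stability theory, and \emph{approximate} ones, which are bounded by comparing minimizers with explicit competitors and using the coercivity of the $q$-th power against the linear boundary term.

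\emph{Exact quantities.} By \cite[Lemma 9.17]{GT:2001}, $\|\nabla\phi^*\|_{\bold{L}^q(\Omega)}\le c\|f\|_{L^q(\Omega)}$. The bound on $\|\boldsymbol{\tau}^*\|_{\bold{L}^q(\Omega)}$ is \eqref{est:stab} in Theorem~\ref{thm:equivalence}.

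\emph{The quantities $\nabla\phi_{\widehat\theta}$ and $\nabla\times\boldsymbol{\psi}_{\eta^*}$.} The first assumption in Assumption~\ref{assumption1} gives
\[
\|\nabla\phi_{\widehat\theta}\|_{\bold{L}^q(\Omega)}\le \|\nabla\phi^*\|_{\bold{L}^q(\Omega)}+1 .
\]
The second assumption gives
\[
\|\nabla\times\boldsymbol{\psi}_{\eta^*}\|_{\bold{L}^q(\Omega)}\le\|\boldsymbol{\tau}^*\|_{\bold{L}^q(\Omega)}+1 .
\]

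\emph{The quantity $\nabla\times\widehat{\boldsymbol{\psi}}$.} I repeat the argument of Theorem~\ref{thm:equivalence} with $\nabla\phi^*$ replaced by $\nabla\phi_{\widehat\theta}$. Comparing $\widehat{\boldsymbol{\tau}}$ with the admissible competitor $\boldsymbol{\tau}=\mathbf{0}$ in \eqref{min:uncons_inter1} gives
\[
\tfrac1q\|\nabla\phi_{\widehat\theta}+\widehat{\boldsymbol{\tau}}\|_{\bold{L}^q(\Omega)}^q+\langle\widehat{\boldsymbol{\tau}}\cdot\boldsymbol{n},g\rangle\le\tfrac1q\|\nabla\phi_{\widehat\theta}\|_{\bold{L}^q(\Omega)}^q .
\]
Since $\nabla\cdot\widehat{\boldsymbol{\tau}}=0$, the normal trace theorem gives
\[
|\langle\widehat{\boldsymbol{\tau}}\cdot\boldsymbol{n},g\rangle|\le c\|\widehat{\boldsymbol{\tau}}\|_{\bold{L}^q(\Omega)}\|g\|_{W^{\frac1q,p}(\partial\Omega)}\le c\big(\|\nabla\phi_{\widehat\theta}+\widehat{\boldsymbol{\tau}}\|_{\bold{L}^q(\Omega)}+\|\nabla\phi_{\widehat\theta}\|_{\bold{L}^q(\Omega)}\big)\|g\|_{W^{\frac1q,p}(\partial\Omega)} .
\]
Young's inequality absorbs the first term on the right into the left-hand side, since $q>1$. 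This bounds $\|\nabla\phi_{\widehat\theta}+\widehat{\boldsymbol{\tau}}\|_{\bold{L}^q(\Omega)}$, and hence $\|\nabla\times\widehat{\boldsymbol{\psi}}\|_{\bold{L}^q(\Omega)}$, in terms of $\|\nabla\phi_{\widehat\theta}\|_{\bold{L}^q(\Omega)}$ and $g$, both already controlled.

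\emph{The quantity $\nabla\times\boldsymbol{\psi}_{\widehat\eta}$ (main obstacle).} Here the minimization property holds only for the empirical loss, so I transfer it to the population loss. Let $\delta:=\sup_{\mathcal{N}_{\boldsymbol{\psi}}}|L_{\rm s}-\widehat L_{\rm s}|$, which is at most $1$ by the third assumption. Since $\widehat\eta$ minimizes $\widehat L_{\rm s}$,
\[
L_{\rm s}(\boldsymbol{\psi}_{\widehat\eta})\le \widehat L_{\rm s}(\boldsymbol{\psi}_{\widehat\eta})+\delta\le \widehat L_{\rm s}(\boldsymbol{\psi}_{\eta^*})+\delta\le L_{\rm s}(\boldsymbol{\psi}_{\eta^*})+2\delta\le L_{\rm s}(\boldsymbol{\psi}_{\eta^*})+2 .
\]

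\emph{Upper bound on $L_{\rm s}(\boldsymbol{\psi}_{\eta^*})$.} The $q$-th power term is bounded by the bounds on $\nabla\phi_{\widehat\theta}$ and $\nabla\times\boldsymbol{\psi}_{\eta^*}$. For the boundary term, the curl part satisfies
\[
|\langle(\nabla\times\boldsymbol{\psi}_{\eta^*})\cdot\boldsymbol{n},g\rangle|\le c\|\nabla\times\boldsymbol{\psi}_{\eta^*}\|_{\bold{L}^q(\Omega)}\|g\|_{W^{\frac1q,p}(\partial\Omega)}
\]
by the trace theorem, since the curl is divergence free. The gradient part needs $\|\nabla\phi_{\widehat\theta}\|_{\bold{X}_q}$, which requires control of $\Delta\phi_{\widehat\theta}$ and not only $\nabla\phi_{\widehat\theta}$. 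The key observation is that this term is the same constant $\langle\nabla\phi_{\widehat\theta}\cdot\boldsymbol{n},g\rangle$ in every value of $L_{\rm s}$ being compared. I therefore subtract it from both sides of the chain above, so that it never needs to be bounded.

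\emph{Lower bound and conclusion.} After subtracting that constant, the left-hand side of the chain is
\[
\tfrac1q\|\nabla\phi_{\widehat\theta}+\nabla\times\boldsymbol{\psi}_{\widehat\eta}\|_{\bold{L}^q(\Omega)}^q+\langle(\nabla\times\boldsymbol{\psi}_{\widehat\eta})\cdot\boldsymbol{n},g\rangle .
\]
Its boundary term is at least $-c\|\nabla\times\boldsymbol{\psi}_{\widehat\eta}\|_{\bold{L}^q(\Omega)}\|g\|_{W^{\frac1q,p}(\partial\Omega)}$. Write $x=\|\nabla\times\boldsymbol{\psi}_{\widehat\eta}\|_{\bold{L}^q(\Omega)}$. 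The chain then gives
\[
\tfrac1q(x-a)_+^q\le C+cx
\]
with $a=\|\nabla\phi_{\widehat\theta}\|_{\bold{L}^q(\Omega)}$ and $C$ depending only on the already-bounded quantities. Because $q>1$, the left side grows superlinearly in $x$, so this forces $x\le c(q,f,g,\Omega)$. All constants depend only on $q,f,g,\Omega$, which completes the plan.
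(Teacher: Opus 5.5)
Your proof is correct, and for $\nabla\times\boldsymbol{\psi}_{\widehat{\eta}}$ it takes a genuinely different and more elementary route than the paper. Your bounds on the other five quantities coincide with the paper's. For $\nabla\times\boldsymbol{\psi}_{\widehat{\eta}}$ the paper starts from the same transfer inequality \eqref{eqn:thm1errordecom}, but then runs a four-step argument. It combines the convexity/smoothness estimates of Proposition~\ref{prop:convexsmooth} with the Euler--Lagrange equation \eqref{eqn:deri0} to obtain \eqref{eqn:psietaq>2}--\eqref{eqn:psietaq<2}. It bounds $\|\nabla\times\widehat{\boldsymbol{\psi}}-\boldsymbol{\tau}^*\|_{\bold{L}^q(\Omega)}$ by comparing $L_{\rm s}^*$ with $L_{\rm s}$, which gives \eqref{prop:bound_pf05}. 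Finally, for $1<q<2$, it inverts an increasing function $h$ to handle the degenerate weight $S$. You use only the coercivity of $\tfrac1q\|\cdot\|_{\bold{L}^q(\Omega)}^q$ against a linear boundary term, after cancelling the common constant $\langle\nabla\phi_{\widehat{\theta}}\cdot\boldsymbol{n},g\rangle$. That cancellation is legitimate because $\phi_{\widehat{\theta}}$ is a smooth network, so the pairing is finite. Your argument needs no case split in $q$, no Proposition~\ref{prop:convexsmooth} and no Euler--Lagrange equation. It also avoids a delicate point in the paper's Step~3. There the two compared functionals $L_{\rm s}^*$ and $L_{\rm s}$ differ by $\langle\nabla(\phi^*-\phi_{\widehat{\theta}})\cdot\boldsymbol{n},g\rangle$. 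Bounding this by the normal-trace theorem requires $\|\Delta(\phi^*-\phi_{\widehat{\theta}})\|_{L^q(\Omega)}$, not only the $W^{1,q}(\Omega)$ bound supplied by Assumption~\ref{assumption1}. Because you only ever compare values of the single functional $L_{\rm s}$, as you yourself point out, that term never arises. What the paper's longer route buys is the quantitative estimates \eqref{eqn:psietaq>2}, \eqref{eqn:psietaq<2} and \eqref{prop:bound_pf05}, which Theorem~\ref{thm:errordecom} reuses to obtain rates. Your argument yields boundedness only, so those estimates would have to be derived separately there. Your bound on $\nabla\times\boldsymbol{\psi}_{\widehat{\eta}}$ would then supply the lower bound on $S$ that the case $1<q<2$ needs.
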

\begin{proof}
Since $\|\phi_{\widehat{\theta}}-\phi^*\|_{W^{1,q}(\Omega)}\leq1$, the elliptic regularity theory \cite[Lemma 9.17]{GT:2001} yields
\begin{equation}\label{prop:bound_pf01}
            \|\nabla\phi_{\widehat{\theta}}\|_{\bold{L}^q(\Omega)}\leq \|\phi^*\|_{W^{1,q}(\Omega)}+1\leq c\|f\|_{L^{q}(\Omega)}+1.    
    \end{equation}
Using the estimate \eqref{est:stab}, we also get 
\begin{equation}\label{prop:bound_pf02}
\|\nabla\times\boldsymbol{\psi}_{\eta^*}\|_{\bold{L}^q(\Omega)}\leq\|\boldsymbol{\tau}^*\|_{\bold{L}^q(\Omega)}+1\leq c(q,\Omega)(\|f\|_{L^q(\Omega)}+\|g\|_{W^{\frac1q,p}(\partial\Omega)}^{\frac pq})+1.
    \end{equation}
By repeating the argument of Theorem \ref{thm:equivalence} and noting \eqref{prop:bound_pf01}, we obtain 
\begin{equation}\label{prop:bound_pf03}
\|\nabla\times\widehat{\boldsymbol{\psi}}\|_{\bold{L}^q(\Omega)}\leq c(q,\Omega)(\|\nabla\phi_{\widehat{\theta}}\|_{\bold{L}^q(\Omega)}+\|g\|_{W^{\frac{1}{q},p}(\partial\Omega)}^{\frac{p}{q}}) \leq c(q,\Omega)( \|f\|_{L^q(\Omega)} + \|g\|_{W^{\frac{1}{q},p}(\partial\Omega)}^{\frac{p}{q}} + 1).
\end{equation}
It remains to prove that $\|\nabla\times\bold{\psi}_{\widehat{\eta}}\|_{\bold{L}^q(\Omega)}$ is bounded. By the minimizing property of $\boldsymbol{\psi}_{\widehat{\eta}}$ to $\widehat{L}_{\rm s}(\boldsymbol{\psi}_\eta)$ over the set  $\mathcal{N}_{\boldsymbol{\psi}}$, we have 
\begin{align}
L_{\rm s}(\boldsymbol{\psi}_{\widehat{\eta}})-L_{\rm s}(\widehat{\boldsymbol{\psi}})
=&[L_{\rm s}(\boldsymbol{\psi}_{\widehat{\eta}})-\widehat{L}_{\rm s}(\boldsymbol{\psi}_{\widehat{\eta}})]+[\widehat{L}_{\rm s}(\boldsymbol{\psi}_{\widehat{\eta}})-\widehat{L}_{\rm s}(\boldsymbol{\psi}_{\eta^*})]+[\widehat{L}_{\rm s}(\boldsymbol{\psi}_{\eta^*})-L_{\rm s}(\boldsymbol{\psi}_{\eta^*})]
\nonumber\\
& +[L_{\rm s}(\boldsymbol{\psi}_{\eta^*})-L_{\rm s}(\widehat{\boldsymbol{\psi}})]\leq 2\sup_{\psi_{\eta} \in \mathcal{N}_{\boldsymbol{\psi}}}\big|L_{\rm s}(\boldsymbol{\psi}_{\eta})-\widehat{L}_{\rm s}(\boldsymbol{\psi}_{\eta})\big|+L_{\rm s}(\boldsymbol{\psi}_{\eta^*})-L_{\rm s}(\widehat{\boldsymbol{\psi}}),\label{eqn:thm1errordecom}
\end{align}
where $\bold{\psi}_{\eta^\ast}$ is the minimizer to problem \eqref{min:best-approximation}. 
The proof extensively uses the duality map $D_q: \bold{L}^q(\Omega) \to \bold{L}^p(\Omega)$, defined by $
D_q(\boldsymbol{v}) = |\boldsymbol{v}|^{q-2}\boldsymbol{v}$, and consists of four steps. First, we bound $L_{\rm s}(\boldsymbol{\psi}_{\widehat{\eta}})-L_{\rm s}(\widehat{\boldsymbol{\psi}})$ from below using $S(\nabla \phi_{\widehat{\theta}} + \nabla\times\boldsymbol{\psi}_{\widehat{\eta}}, \nabla \phi_{\widehat{\theta}} + \nabla\times\widehat{\boldsymbol{\psi}})\|\nabla\times(\boldsymbol{\psi}_{\widehat{\eta}}-\widehat{\boldsymbol{\psi}})\|_{\bold{L}^q(\Omega)}$ when $1<q<2$ and $\|\nabla\times(\boldsymbol{\psi}_{\widehat{\eta}}-\widehat{\boldsymbol{\psi}})\|_{\bold{L}^q(\Omega)}$ when $q\geq2$. Then we bound \eqref{eqn:thm1errordecom} from above by $c(q,f,g,\Omega)$, which implies the boundedness of $\|\nabla\times\boldsymbol{\psi}_{\widehat{\eta}}\|_{\bold{L}^q(\Omega)}$ when $q\geq2$. Last, we show that $S(\nabla \phi_{\widehat{\theta}} + \nabla\times\boldsymbol{\psi}_{\widehat{\eta}}, \nabla \phi_{\widehat{\theta}} + \nabla\times\widehat{\boldsymbol{\psi}})\|\nabla\times(\boldsymbol{\psi}_{\widehat{\eta}}-\widehat{\boldsymbol{\psi}})\|_{\bold{L}^q(\Omega)}$ is larger than a strictly increasing function of $\|\nabla\times(\boldsymbol{\psi}_{\widehat{\eta}}-\widehat{\boldsymbol{\psi}})\|_{\bold{L}^q(\Omega)}$, from which we deduce the boundedness when $1<q<2$.

\noindent {\it Step 1.} Lower bound on $L_{\rm s}(\boldsymbol{\psi}_{\widehat{\eta}})-L_{\rm s}(\widehat{\boldsymbol{\psi}})$. Inequalities \eqref{est:q<2convex} and \eqref{est:q>=2convex} imply
\begin{equation}\label{thm:errordecomp_pf02}
\tfrac{1}{q}\|\nabla\phi_{\widehat{\theta}}+\nabla\times\boldsymbol{\psi}_{\widehat{\eta}}\|_{\bold{L}^q(\Omega)}^q  \geq \tfrac{1}{q}\|\nabla\phi_{\widehat{\theta}}+\nabla\times\widehat{\boldsymbol{\psi}}\|_{\bold{L}^q(\Omega)}^q+( \widehat{d}, \widehat{e}_{\bold\psi}) +c_1\|\widehat{e}_{\bold\psi}\|_{\bold{L}^q(\Omega)}^{\max\{q,2\}},
\end{equation}
with $\widehat{e}_{\bold\psi} = \nabla\times(\boldsymbol{\psi}_{\widehat{\eta}}-\widehat{\boldsymbol{\psi}})$, $$
\widehat{d} = D_q(\nabla\phi_{\widehat{\theta}} + \nabla\times\widehat{\boldsymbol{\psi}}) \quad \mbox{and}\quad
c_1=\left\{\begin{aligned}
    &c(q)S( \nabla\phi_{\widehat{\theta}} + \nabla\times\boldsymbol{\psi}_{\widehat{\eta}}, \nabla\phi_{\widehat{\theta}} + \nabla\times\widehat{\boldsymbol{\psi}}), &&1<q<2,\\
    &c(q), &&q\geq2.
\end{aligned}\right.$$
Since $\widehat{\boldsymbol{\psi}}$ minimizes $L_{\rm s}(\boldsymbol{\psi})$ over $\bold{W}^{1,q}(\Omega)$ (cf. \eqref{min:loss2prac}),  it satisfies the Euler-Lagrange equation
\begin{equation}\label{eqn:deri0}
(\widehat{d}, \nabla \times \boldsymbol{\psi})+\langle \nabla \times \bold{\psi}  \cdot\bold{n} , g \rangle=0,\quad \forall \bold{\psi} \in \bold{W}^{1,q}(\Omega).
\end{equation}
Then adding the term $\langle(\nabla\phi_{\widehat{\theta}}+\nabla\times\boldsymbol{\psi}_{\widehat{\eta}})\cdot\bold{n},g\rangle$ to \eqref{thm:errordecomp_pf02} and using \eqref{eqn:deri0} with $\bold{\psi}=  \boldsymbol{\psi}_{\widehat{\eta}}-\widehat{\boldsymbol{\psi}}$ yield
\begin{align*}
L_{\rm s}(\boldsymbol{\psi}_{\widehat{\eta}})
\geq& \tfrac{1}{q}\|\nabla\phi_{\widehat{\theta}}+\nabla\times\widehat{\boldsymbol{\psi}}\|_{\bold{L}^q(\Omega)}^q+\langle(\nabla\phi_{\widehat{\theta}}+\nabla\times\widehat{\boldsymbol{\psi}})\cdot\bold{n},g\rangle+c_1\|\widehat{e}_{\bold\psi}\|_{\bold{L}^q(\Omega)}^{\max\{q,2\}}\\
&+ [(\widehat{d}, \widehat{e}_{\bold\psi})+\langle(\widehat{e}_{\bold\psi}\cdot\bold{n},g\rangle]
=L_{\rm s}(\widehat{\boldsymbol{\psi}})+c_1\| \widehat{e}_{\bold\psi}\|_{\bold{L}^q(\Omega)}^{\max\{q,2\}}.
\end{align*}
Thus the following estimate holds
\begin{equation}\label{eqn:thm1convex}
c_1\|\widehat{e}_{\bold\psi}\|_{\bold{L}^q(\Omega)}^{\max\{q,2\}}\leq L_{\rm s}(\boldsymbol{\psi}_{\widehat{\eta}})-L_{\rm s}(\widehat{\boldsymbol{\psi}}).
\end{equation}
{\it Step 2.} Upper bound on $L_{\rm s}(\boldsymbol{\psi}_{\eta^\ast})-L_{\rm s}(\widehat{\boldsymbol{\psi}})$. By \eqref{est:q<2smooth} and \eqref{est:q>=2smooth} , we deduce
\begin{equation}\label{eqn:qqqqq}
\tfrac{1}{q}\|\nabla\phi_{\widehat{\theta}}+\nabla\times\boldsymbol{\psi}_{\eta^*}\|_{\bold{L}^q(\Omega)}^q \leq \tfrac{1}{q}\|\nabla\phi_{\widehat{\theta}}+\nabla\times\widehat{\boldsymbol{\psi}}\|_{\bold{L}^q(\Omega)}^q+(\widehat{d}, e_{\bold\psi}^*)+c_2\| e_{\bold\psi}^*\|_{\bold{L}^q(\Omega)}^{\min\{q,2\}},
\end{equation}
with $$
e_{\bold\psi}^*=\nabla\times(\boldsymbol{\psi}_{\eta^*}-\widehat{\boldsymbol{\psi}}) \quad\mbox{and}\quad c_2=\left\{\begin{aligned}
    &c(q), &&1<q<2,\\
    &c(q)S( \nabla \phi_{\widehat{\theta}} + \nabla\times\boldsymbol{\psi}_{\eta^*}, \nabla \phi_{\widehat{\theta}} + \nabla\times\widehat{\boldsymbol{\psi}}), &&q\geq2.
\end{aligned}\right.$$
By the estimates \eqref{prop:bound_pf01}, \eqref{prop:bound_pf02} and \eqref{prop:bound_pf03}, $\|\nabla\phi_{\widehat{\theta}}\|_{\bold{L}^q(\Omega)}$, $\|\nabla\times\boldsymbol{\psi}_{\eta^*}\|_{\bold{L}^q(\Omega)}$ and $\|\nabla\times\widehat{\boldsymbol{\psi}}\|_{\bold{L}^q(\Omega)}$ are uniformly bounded. Thus we can bound the factor $S(\nabla\phi_{\widehat{\theta}} + \nabla\times\boldsymbol{\psi}_{\eta^*},\nabla\phi_{\widehat{\theta}} + \nabla\times\widehat{\boldsymbol{\psi}})$ for $q\geq 2$ by
\[
    S(\nabla\phi_{\widehat{\theta}} +\nabla\times\boldsymbol{\psi}_{\eta^*},\nabla\phi_{\widehat{\theta}} +\nabla\times\widehat{\boldsymbol{\psi}}) = \int_{0}^1 (\|\nabla\phi_{\widehat{\theta}} + \nabla\times\widehat{\boldsymbol{\psi}}\|_{\bold{L}^q(\Omega)}^q + \|\nabla\phi_{\widehat{\theta}} + \nabla\times\widehat{\boldsymbol{\psi}} + t e_{\bold\psi}^*\|_{\bold{L}^q(\Omega)}^q )^{1-\frac{2}{q}} t {\rm d}t    \leq c, 
\]
with $c=c(q,f,g,\Omega)$. Consequently, from \eqref{eqn:qqqqq}, we deduce
\begin{equation*}
\begin{aligned}
\tfrac{1}{q}\|\nabla\phi_{\widehat{\theta}}+\nabla\times\boldsymbol{\psi}_{\eta^*}\|_{\bold{L}^q(\Omega)}^q & \leq \tfrac{1}{q}\|\nabla\phi_{\widehat{\theta}}+\nabla\times\widehat{\boldsymbol{\psi}}\|_{\bold{L}^q(\Omega)}^q+( \widehat{d},  e_{\bold\psi}^*) + c \| e_{\bold\psi}^*\|_{\bold{L}^q(\Omega)}^{\min\{q,2\}},
\end{aligned}
\end{equation*}
with $c=c(q,f,g,\Omega)$ for $q\in (1,\infty)$. Then adding the term $\langle(\nabla\phi_{\widehat{\theta}}+\nabla\times\boldsymbol{\psi}_{\eta^*})\cdot\bold{n},g\rangle$ to the inequality and using the Euler-Lagrange equation \eqref{eqn:deri0} (with $\bold\psi=\bold{\psi}_{\eta^*} - \widehat{\bold{\psi}}$) give
\begin{align*}
L_{\rm s}(\boldsymbol{\psi}_{\eta^*})
\leq& \tfrac{1}{q}\|\nabla\phi_{\widehat{\theta}}+\nabla\times\widehat{\boldsymbol{\psi}}\|_{\bold{L}^q(\Omega)}^q+(\widehat{d}, e_{\bold\psi}^*) + c \| e_{\bold\psi}^*\|_{\bold{L}^q(\Omega)}^{\min\{q,2\}}
+\langle(\nabla\phi_{\widehat{\theta}}+\nabla\times\boldsymbol{\psi}_{\eta^*})\cdot\bold{n},g\rangle\\
=&L_{\rm s}(\widehat{\boldsymbol{\psi}})+c\| e_{\bold\psi}^*\|_{\bold{L}^q(\Omega)}^{\min\{q,2\}}.
\end{align*}
This directly implies 
\begin{equation}\label{eqn:thm1conti}
c\|\nabla\times(\boldsymbol{\psi}_{\eta^*}-\widehat{\boldsymbol{\psi}})\|_{\bold{L}^q(\Omega)}^{\min\{q,2\}}\geq L_{\rm s}(\boldsymbol{\psi}_{\eta^*})-L_{\rm s}(\widehat{\boldsymbol{\psi}}).
\end{equation}
Since $\bold{\psi}_{\eta^\ast}$ is the minimizer to problem \eqref{min:best-approximation}, using \eqref{eqn:thm1errordecom}, \eqref{eqn:thm1convex}, \eqref{eqn:thm1conti} and the triangle inequality, we obtain with $c=c(q,f,g,\Omega)$, when $q\geq2$,  
\begin{align}\label{eqn:psietaq>2}
\|\widehat{e}_{\bold\psi}\|_{\bold{L}^q(\Omega)}
    \leq c\big(\|\boldsymbol{\tau}^*-\nabla\times\widehat{\boldsymbol{\psi}}\|_{\bold{L}^q(\Omega)}^{\frac{2}{q}}+\inf_{\bold{\psi}_{\eta} \in \mathcal{N}_{\bold{\psi}}}\|\nabla\times\boldsymbol{\psi}_{\eta}-\boldsymbol{\tau}^*\|_{\bold{L}^q(\Omega)}^{\frac{2}{q}}+\sup_{\bold{\psi}_{\eta} \in \mathcal{N}_{\boldsymbol{\psi}}}\big|L_{\rm s}(\boldsymbol{\psi}_{\eta})-\widehat{L}_{\rm s}(\boldsymbol{\psi}_{\eta})\big|^{\frac{1}{q}}\big),
    \end{align}
and when $1<q<2$,
\begin{align}\label{eqn:psietaq<2}
&S(\nabla \phi_{\widehat{\theta}} + \nabla\times\boldsymbol{\psi}_{\widehat{\eta}}, \nabla \phi_{\widehat{\theta}} + \nabla\times\widehat{\boldsymbol{\psi}})^{\frac{1}{2}}\|\widehat{e}_{\bold\psi}\|_{\bold{L}^q(\Omega)}\\
\leq&c\big(\|\boldsymbol{\tau}^*-\nabla\times\widehat{\boldsymbol{\psi}}\|_{\bold{L}^q(\Omega)}^{\frac{q}{2}}+\inf_{\bold{\psi}_\eta\in \mathcal{N}_{\bold{\psi}}}\|\nabla\times\boldsymbol{\psi}_{\eta}-\boldsymbol{\tau}^*\|_{\bold{L}^q(\Omega)}^{\frac{q}{2}}+\sup_{\bold{\psi}_{\eta} \in \mathcal{N}_{\boldsymbol{\psi}}}\big|L_{\rm s}(\boldsymbol{\psi}_{\eta})-\widehat{L}_{\rm s}(\boldsymbol{\psi}_{\eta})\big|^{\frac{1}{2}}\big).\nonumber
\end{align}
{\it Step 3.} Boundedness of  $\|\bold{\psi}_{\widehat{\eta}}\|_{\bold{L}^q(\Omega)}$ for $q\geq 2$. 
Note $\bold{\tau}^\ast = \nabla \times \bold{\psi}^\ast$. By the minimizing property of $\widehat{\boldsymbol{\psi}}$ to the loss $L_{\rm s}(\boldsymbol{\psi})$, there holds
\begin{align}
L_{\rm s}^*(\widehat{\boldsymbol{\psi}})-L_{\rm s}^*(\boldsymbol{\psi}^*)= &[L_{\rm s}^*(\widehat{\boldsymbol{\psi}})-L_{\rm s}(\widehat{\boldsymbol{\psi}})]+[L_{\rm s}(\widehat{\boldsymbol{\psi}})-L_{\rm s}(\boldsymbol{\psi}^*)]+[L_{\rm s}(\boldsymbol{\psi}^*)-L_{\rm s}^*(\boldsymbol{\psi}^*)]\nonumber\\
\leq &[L_{\rm s}^*(\widehat{\boldsymbol{\psi}})-L_{\rm s}(\widehat{\boldsymbol{\psi}})] +[L_{\rm s}(\boldsymbol{\psi}^*)-L_{\rm s}^*(\boldsymbol{\psi}^*)].\label{eqn:errordecom2}
\end{align}
Repeating the argument for \eqref{eqn:thm1convex} leads to
\begin{equation}\label{est:loss-ast_convex}
c\|\nabla\times\widehat{\boldsymbol{\psi}}-\boldsymbol{\tau}^*\|_{\bold{L}^{q}(\Omega)}^{\max\{q,2\}}\leq L_{\rm s}^*(\widehat{\boldsymbol{\psi}})-L_{\rm s}^*(\boldsymbol{\psi}^*).
\end{equation}
Next, by the definitions of $L_{\rm s}^\ast(\widehat{\boldsymbol{\psi}})$ and $L_{\rm s}(\widehat{\boldsymbol{\psi}})$, \eqref{prop:bound_pf01}, \eqref{prop:bound_pf03}, the trace theorem for  $\bold{X}_{q}$ and the assumption $\|\phi_{\widehat{\theta}}-\phi^*\|_{W^{1,q}(\Omega)}\leq1$, with $\widehat{e}_\phi=\phi^*-\phi_{\widehat\theta}$, by \eqref{est:q<2smooth} and \eqref{est:q>=2smooth}, we find 
\begin{align*}
L_{\rm s}^*(\widehat{\boldsymbol{\psi}})-L_{\rm s}(\widehat{\boldsymbol{\psi}})
= &\tfrac{1}{q}\|\nabla\phi^*+\nabla\times\widehat{\boldsymbol{\psi}}\|_{\bold{L}^q(\Omega)}^q-\tfrac{1}{q}\|\nabla\phi_{\widehat{\theta}} + \nabla\times\widehat{\boldsymbol{\psi}}\|_{\bold{L}^q(\Omega)}^q+\langle \nabla \widehat{e}_\phi\cdot\bold{n},g\rangle \\
\leq & (\nabla \widehat{e}_\phi, \widehat{d})+c\|\nabla \widehat{e}_\phi\|_{\bold{L}^q(\Omega)}^{\min\{q,2\}}+\langle \nabla \widehat{e}_\phi\cdot\bold{n},g\rangle\\
=&\|\nabla \widehat{e}_\phi\|_{\bold{L}^q( \Omega)}\|\nabla\phi_{\widehat{\theta}} + \nabla\times\widehat{\boldsymbol{\psi}}\|_{\bold{L}^q( \Omega)}^{q-1}+c\|\nabla \widehat{e}_\phi\|_{\bold{L}^q(\Omega)}^{\min\{q,2\}}+\langle \nabla \widehat{e}_\phi\cdot\bold{n},g\rangle\\
\leq& c\|\widehat{e}_\phi\|_{W^{1,q}(\Omega)} \leq c(q,f,g).
\end{align*}
Similarly,
$L_{\rm s}(\boldsymbol{\psi}^*)-L_{\rm s}^*(\boldsymbol{\psi}^*) \leq  c (q,f,g)$. These two estimates, together with \eqref{eqn:errordecom2} and \eqref{est:loss-ast_convex}, yield
\begin{equation}\label{prop:bound_pf05}
\|\nabla\times\widehat{\boldsymbol{\psi}}-\boldsymbol{\tau}^*\|_{\bold{L}^{q}(\Omega)}^{\max\{q,2\}}\leq c(q,f,g).
\end{equation}
By \eqref{prop:bound_pf05}, \eqref{eqn:psietaq>2} with $\bold{\psi}_{\eta}=\bold{0}$ in $\|\nabla\times\boldsymbol{\psi}_{\eta}-\boldsymbol{\tau}^*\|_{\bold{L}^q(\Omega)}^{\frac{2}{q}}$, \eqref{est:stab} and Assumption \ref{assumption1}, we get $\|\widehat{e}_{\bold\psi}\|_{\bold{L}^q(\Omega)}\leq c(q,f,g)$ (recall $\widehat{e}_{\bold\psi} = \nabla\times(\boldsymbol{\psi}_{\widehat{\eta}}-\widehat{\boldsymbol{\psi}})$) for $q\geq2$, which, along with \eqref{prop:bound_pf03}, implies  
\begin{equation}\label{prop:bound_pf06}
    \|\nabla\times\boldsymbol{\psi}_{\widehat{\eta}}\|_{\bold{L}^q(\Omega)} \leq c(q,f,g).
\end{equation}
{\it Step 4.} Boundedness of $\|\bold{\psi}_{\widehat{\eta}}\|_{\bold{L}^q(\Omega)}$ for $1<q<2$. It follows from the estimate \eqref{eqn:psietaq<2} with $\bold{\psi}_{\eta}=\bold{0}$ in $\|\nabla\times\boldsymbol{\psi}_{\eta}-\boldsymbol{\tau}^*\|_{\bold{L}^q(\Omega)}^{\frac{q}{2}}$, \eqref{est:stab}, \eqref{prop:bound_pf05} and Assumption \ref{assumption1} that
$S(\nabla \phi_{\widehat{\theta}} + \nabla\times\boldsymbol{\psi}_{\widehat{\eta}},\nabla \phi_{\widehat{\theta}} + \nabla\times\widehat{\boldsymbol{\psi}})\|\widehat{e}_{\bold\psi}\|_{\bold{L}^q(\Omega)}^2\leq c(q,f,g).$
Next, by the elementary inequality $(a+b)^t\leq a^t+b^t$ when $a,b\geq0$ and $0<t<1$, we get
 \begin{align*}
& ( \|\nabla \phi_{\widehat{\theta}} + \nabla\times\widehat{\boldsymbol{\psi}}\|_{\bold{L}^q(\Omega)}^q + \|\nabla \phi_{\widehat{\theta}} + \nabla\times\widehat{\boldsymbol{\psi}} + t\widehat{e}_{\bold\psi}\|_{\bold{L}^q(\Omega)}^q )^{1-\frac{2}{q}} \\
    \geq & ( \|\nabla \phi_{\widehat{\theta}} + \nabla\times\widehat{\boldsymbol{\psi}}\|_{\bold{L}^q(\Omega)}^{2-q} + \|\nabla \phi_{\widehat{\theta}} + \nabla\times \widehat{\boldsymbol{\psi}} + t\widehat{e}_{\bold\psi}\|_{\bold{L}^q(\Omega)}^{2-q} )^{-1} \\
     \geq &  ( 2\|\nabla \phi_{\widehat{\theta}} + \nabla\times\widehat{\boldsymbol{\psi}}\|_{\bold{L}^q(\Omega)}^{2-q} + t^{2-q}\|\widehat{e}_{\bold\psi}\|_{\bold{L}^q(\Omega)}^{2-q} )^{-1} 
    \geq 
    (\|\widehat{e}_{\bold\psi}\|_{\bold{L}^q(\Omega)}^{2-q} + 2\|\nabla \phi_{\widehat{\theta}} + \nabla\times\widehat{\boldsymbol{\psi}}\|_{\bold{L}^q(\Omega)}^{2-q})^{-1}.
\end{align*}
This, the definition of $S(\nabla \phi_{\widehat{\theta}} + \nabla\times\boldsymbol{\psi}_{\widehat{\eta}},\nabla \phi_{\widehat{\theta}} +  \nabla\times\widehat{\boldsymbol{\psi}})$, \eqref{prop:bound_pf01} and \eqref{prop:bound_pf03} imply with $c=c(q,f,g,\Omega)$ 
\begin{align*}
S(\nabla \phi_{\widehat{\theta}} + \nabla\times\boldsymbol{\psi}_{\widehat{\eta}}, \nabla \phi_{\widehat{\theta}} + \nabla\times\widehat{\boldsymbol{\psi}})\|\widehat{e}_{\bold\psi}\|_{\bold{L}^q(\Omega)}^2    \geq (2\|\widehat{e}_{\bold\psi}\|_{\bold{L}^q(\Omega)}^{2-q} + c)^{-1}\|\widehat{e}_{\bold\psi}\|_{\bold{L}^q(\Omega)}^2:=h(\|\widehat{e}_{\bold\psi}\|_{\bold{L}^q(\Omega)}).
\end{align*}
Since $h(s)=\frac{s^2}{2s^{2-q}+c}$ with $c>0$ and $s\geq0$ is continuous and strictly increasing, there exists an increasing inverse $h^{-1}$. Therefore, from the inequalities
\[
h\bigl(\|\widehat{e}_{\bold\psi}\|_{\bold{L}^q(\Omega)}\bigr) \leq S(\nabla \phi_{\widehat{\theta}} + \nabla\times\boldsymbol{\psi}_{\widehat{\eta}}, \nabla \phi_{\widehat{\theta}} + \nabla\times\widehat{\boldsymbol{\psi}})\|\widehat{e}_{\bold\psi}\|_{\bold{L}^q(\Omega)}^2 \leq c,
\]
we know
$\|\widehat{e}_{\bold\psi}\|_{\bold{L}^q(\Omega)} \leq h^{-1}(c)$,
which, together with \eqref{prop:bound_pf03} again, yields the desired assertion.
\end{proof}

Now we bound the error of $\boldsymbol{\sigma}_{\widehat{\theta},\widehat{\eta}}$ in terms of the approximation error $\mathcal{E}_{\rm app}$ due to DNNs and the statistical error $\mathcal{E}_{\rm stat}$ arising from Monte Carlo quadrature.

\begin{theorem}\label{thm:errordecom}
Let $\boldsymbol{\sigma}^*$ be the solution of problem \eqref{min:origin} and $\boldsymbol{\sigma}_{\widehat{\theta},\widehat{\eta}}$ be the approximation given by Algorithm~\ref{alg:dualform}. Under Assumption \ref{assumption1}, there exists $c=c(q,f,g,\Omega,\lambda)$ such that
\begin{equation}
\|\boldsymbol{\sigma}^*-\boldsymbol{\sigma}_{\widehat{\theta},\widehat{\eta}}\|_{\boldsymbol{L}^q(\Omega)}\leq c(\mathcal{E}_{\rm app}+\mathcal{E}_{\rm stat}),
\end{equation}
where the approximation error $\mathcal{E}_{\rm app}$ and the statistical error $\mathcal{E}_{\rm stat}$ are, respectively, given by
\begin{align*}
\mathcal{E}_{\rm app}&=\left\{
\begin{aligned}
    &\inf_{\phi_{\theta} \in \mathcal{N}_{\phi}}\|\phi_{\theta}-\phi^*\|_{W^{2,q}(\Omega)}^{\frac{q}{4}}+\inf_{\boldsymbol{\psi}_\eta\in\mathcal{N}_{\boldsymbol{\psi}}}\|\nabla\times\boldsymbol{\psi}_{\eta}-\boldsymbol{\tau}^*\|_{\bold{L}^q(\Omega)}^{\frac{q}{2}},\quad 1<q<2,\\
    &\inf_{\phi_{\theta} \in \mathcal{N}_{\phi}}\|\phi_{\theta}-\phi^*\|_{W^{2,q}(\Omega)}^{\frac{2}{q^2}}+\inf_{\boldsymbol{\psi}_\eta\in\mathcal{N}_{\boldsymbol{\psi}}}\|\nabla\times\boldsymbol{\psi}_{\eta}-\boldsymbol{\tau}^*\|_{\bold{L}^q(\Omega)}^{\frac{2}{q}},\quad q \geq 2,
\end{aligned}\right.\\
\mathcal{E}_{\rm stat}&=\left\{
\begin{aligned}
&\sup _{\phi_{\theta} \in \mathcal{N}_{\phi}}\left|L_{\rm p}(\phi_{\theta})-\widehat{L}_{\rm p}(\phi_{\theta})\right|^{\frac{q}{4}}+\sup_{\boldsymbol{\psi}_{\eta} \in \mathcal{N}_{\boldsymbol{\psi}}}\big|L_{\rm s}(\boldsymbol{\psi}_{\eta})-\widehat{L}_{\rm s}(\boldsymbol{\psi}_{\eta})\big|^{\frac{1}{2}},\quad 1<q<2,\\
&\sup _{\phi_{\theta} \in \mathcal{N}_{\phi}}\left|L_{\rm p}(\phi_{\theta})-\widehat{L}_{\rm p}(\phi_{\theta})\right|^{\frac{2}{q^2}}+\sup_{\boldsymbol{\psi}_{\eta} \in \mathcal{N}_{\boldsymbol{\psi}}}\big|L_{\rm s}(\boldsymbol{\psi}_{\eta})-\widehat{L}_{\rm s}(\boldsymbol{\psi}_{\eta})\big|^{\frac{1}{q}},\quad q\geq 2.
\end{aligned}\right.
\end{align*}
\end{theorem}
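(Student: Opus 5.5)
Starting from the splitting \eqref{equ:error_decom} and \eqref{equ:error_decom_s}, I write
\[
\boldsymbol{\sigma}^*-\boldsymbol{\sigma}_{\widehat{\theta},\widehat{\eta}}=\nabla(\phi^*-\phi_{\widehat{\theta}})+(\boldsymbol{\tau}^*-\nabla\times\widehat{\boldsymbol{\psi}})-\widehat{e}_{\boldsymbol\psi}
\]
and bound the three pieces one at a time. The first piece is controlled by Lemma~\ref{lem:decomp}, which gives $\|\nabla\widehat e_\phi\|_{\bold L^q(\Omega)}\le c(A_\phi+S_\phi)$. Here $A_\phi$ and $S_\phi$ denote the $\phi$-approximation and $\phi$-statistical terms. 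By Assumption~\ref{assumption1} this quantity is at most $1$. So any smaller power of it dominates it, and in particular it is dominated by the powers $q/4$ and $2/q^2$ that appear in the theorem.

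The second piece is the perturbation error $\boldsymbol{\tau}^*-\nabla\times\widehat{\boldsymbol{\psi}}$, and I would follow Step~3 of the proof of Proposition~\ref{prop:bound} more carefully. Combining \eqref{eqn:errordecom2} with \eqref{est:loss-ast_convex} gives
\[
c_1\|\boldsymbol{\tau}^*-\nabla\times\widehat{\boldsymbol{\psi}}\|^{\max\{q,2\}}\le [L_{\rm s}^*(\widehat{\boldsymbol{\psi}})-L_{\rm s}(\widehat{\boldsymbol{\psi}})]+[L_{\rm s}(\boldsymbol{\psi}^*)-L_{\rm s}^*(\boldsymbol{\psi}^*)].
\]
Two remarks on the constant $c_1$. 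For $1<q<2$ it contains the factor $S$, but Proposition~\ref{prop:bound} bounds all the relevant norms, so $S$ is bounded below by a positive constant. For $q\ge2$ no such issue arises.

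Each bracket on the right is a difference of the functional evaluated at two shifts $\nabla\phi^*$ and $\nabla\phi_{\widehat\theta}$. Expanding with Proposition~\ref{prop:convexsmooth}, the linear term $(D_q(\cdot),\nabla\widehat e_\phi)$ and the trace pairing $\langle\nabla\widehat e_\phi\cdot\boldsymbol n,g\rangle$ are each bounded by $c\|\widehat e_\phi\|_{W^{1,q}}$. For the trace pairing I use that $\Delta\widehat e_\phi$ is controlled in the $\bold X_q$ norm through $L_{\rm p}$, which is the point where the $W^{2,q}$-based estimate \eqref{eq:loss_bound} enters. The higher-order terms are absorbed, again because everything is at most $1$. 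The result is
\[
\|\boldsymbol{\tau}^*-\nabla\times\widehat{\boldsymbol{\psi}}\|\le c(A_\phi+S_\phi)^{1/\max\{q,2\}}.
\]
This gives exponent $1/2$ for $q<2$ and $1/q$ for $q\ge2$.

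The third piece is $\widehat e_{\boldsymbol\psi}$, handled by \eqref{eqn:psietaq>2} and \eqref{eqn:psietaq<2}. For $q<2$ I first bound $S^{1/2}$ from below by a positive constant, which follows from the boundedness in Proposition~\ref{prop:bound}. Then I substitute the second-piece bound into the term $\|\boldsymbol{\tau}^*-\nabla\times\widehat{\boldsymbol{\psi}}\|$. Composing the exponents gives:
\begin{itemize}
\item for $q<2$, $\bigl((A_\phi+S_\phi)^{1/2}\bigr)^{q/2}=(A_\phi+S_\phi)^{q/4}$;
\item for $q\ge2$, $\bigl((A_\phi+S_\phi)^{1/q}\bigr)^{2/q}=(A_\phi+S_\phi)^{2/q^2}$.
\end{itemize}
These match $\mathcal E_{\rm app}$ and $\mathcal E_{\rm stat}$ exactly. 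The $\boldsymbol\psi$-terms of the theorem appear directly in \eqref{eqn:psietaq>2} and \eqref{eqn:psietaq<2}. Summing the three pieces, and using $x\le x^{\alpha}$ for $x\le1$ and $\alpha\le1$ so that everything is expressed in the weakest exponent, finishes the proof. Finally, $(a+b)^\alpha\le a^\alpha+b^\alpha$ splits each combined power into separate approximation and statistical contributions.

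I expect the second piece to be the main obstacle. It needs a sharp, linear-in-$\|\widehat e_\phi\|$ bound on the perturbation brackets, and the boundary pairing $\langle\nabla\widehat e_\phi\cdot\boldsymbol n,g\rangle$ requires control of the normal trace. Lemma~\ref{lem:decomp} only gives $W^{1,q}$ control, so I would go back to \eqref{eq:loss_bound} to bound $\|\Delta\widehat e_\phi\|_{L^q}$ as well. If that fails, I would use the Euler--Lagrange identity \eqref{eqn:deri0} to convert the pairing into a volume term.
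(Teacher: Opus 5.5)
Your proposal is essentially the paper's proof. It uses the same three-term splitting, the same perturbation bound $\|\boldsymbol\tau^*-\nabla\times\widehat{\boldsymbol\psi}\|_{\bold L^q(\Omega)}\le c(A_\phi+S_\phi)^{1/\max\{q,2\}}$ taken from Step~3 of Proposition~\ref{prop:bound}, the same lower bound on $S$ from uniform boundedness, and the same composition of exponents through \eqref{eqn:psietaq>2}--\eqref{eqn:psietaq<2} followed by Lemma~\ref{lem:decomp}. Your control of $\|\Delta\widehat e_\phi\|_{L^q(\Omega)}$ via \eqref{eq:loss_bound} for the pairing $\langle\nabla\widehat e_\phi\cdot\boldsymbol n,g\rangle$ fixes a step the paper skips, since it bounds that pairing by $\|\widehat e_\phi\|_{W^{1,q}(\Omega)}$ alone. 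That base is then no longer $\le 1$ by Assumption~\ref{assumption1}, so lower the second piece's exponent using the uniform bound on $\|\boldsymbol\tau^*-\nabla\times\widehat{\boldsymbol\psi}\|_{\bold L^q(\Omega)}$ from Proposition~\ref{prop:bound}, i.e.\ $x\le C^{1-\alpha}x^\alpha$ for $x\le C$, rather than $x\le x^\alpha$.
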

\begin{proof}
In view of Theorem \ref{thm:equivalence}, we have $\boldsymbol{\sigma}^*=\nabla\phi^*+\boldsymbol{\tau}^*$ with $\phi^\ast$ and $\bold{\tau}^\ast = \nabla \times \bold{\psi}^\ast$ solving \eqref{eqn:Poisson} and \eqref{min:unconstrained}, respectively. Using the splittings \eqref{equ:error_decom} and \eqref{equ:error_decom_s}, we have $$\boldsymbol{\sigma}^*-\boldsymbol{\sigma}_{\widehat{\theta},\widehat{\eta}} = \nabla ( \phi^\ast -  \phi_{\widehat{\theta}} ) + \nabla \times (\boldsymbol{\psi}^\ast - \widehat{\bold{\psi}} ) -\widehat{e}_{\bold\psi},\quad \mbox{with }\widehat{e}_{\bold\psi} = \nabla\times(\boldsymbol{\psi}_{\widehat{\eta}}-\widehat{\boldsymbol{\psi}}).$$  
Consequently,
\begin{equation}\label{thm:errordecomp_pf01}
\|\boldsymbol{\sigma}^*-\boldsymbol{\sigma}_{\widehat{\theta},\widehat{\eta}}\|_{\bold{L}^{q}(\Omega)}
\leq\|\phi_{\widehat{\theta}}-\phi^*\|_{W^{1,q}(\Omega)}+\|\widehat{e}_{\bold\psi}\|_{\bold{L}^{q}(\Omega)}+\|\nabla\times(\widehat{\bold{\psi}}-\boldsymbol{\psi}^*)\|_{\bold{L}^{q}(\Omega)}.
\end{equation}
By Proposition \ref{prop:bound}, $\|\nabla \phi_{\widehat{\theta}}\|_{\bold{L}^q(\Omega)}$, $\|\nabla\times\boldsymbol{\psi}_{\widehat{\eta}}\|_{\bold{L}^q(\Omega)}$ and $\|\nabla\times\widehat{\boldsymbol{\psi}}\|_{\bold{L}^q(\Omega)}$ are all bounded. Then it follows from \eqref{eqn:psietaq<2} that for $1<q<2$, there holds with $c=c(q,f,g,\Omega)$,
\begin{align*}
   &c\Big(\|\boldsymbol{\tau}^*-\nabla\times\widehat{\boldsymbol{\psi}}\|_{\bold{L}^q(\Omega)}^{\frac{q}{2}}+\inf_{\boldsymbol{\psi}_\eta\in\mathcal{N}_{\boldsymbol{\psi}}}\|\nabla\times\boldsymbol{\psi}_{\eta}-\boldsymbol{\tau}^*\|_{\bold{L}^q(\Omega)}^{\frac{q}{2}}+\sup_{\psi_{\eta} \in \mathcal{N}_{\boldsymbol{\psi}}}\big|L_{\rm s}(\boldsymbol{\psi}_{\eta})-\widehat{L}_{\rm s}(\boldsymbol{\psi}_{\eta})\big|^{\frac{1}{2}}\Big)\\
   \geq &cS( \nabla \phi_{\widehat{\theta}} + \nabla\times\boldsymbol{\psi}_{\widehat{\eta}}, \nabla \phi_{\widehat{\theta}} + \nabla\times\widehat{\boldsymbol{\psi}})^{\frac{1}{2}}\|\widehat{e}_{\bold\psi}\|_{\bold{L}^q(\Omega)} \\
= & c\left(\int_{0}^1 ( \|\nabla \phi_{\widehat{\theta}} + \nabla \times \widehat{\bold\psi}\|_{\bold{L}^q(\Omega)}^q + \|\nabla \phi_{\widehat{\theta}} + \nabla\times\widehat{\boldsymbol{\psi}} + t\widehat{e}_{\bold\psi}\|_{\bold{L}^q(\Omega)}^q )^{1-\frac{2}{q}} t {\rm d}t\right)^{\frac{1}{2}}\|\widehat{e}_{\bold\psi}\|_{\bold{L}^q(\Omega)}
\geq  c\|\widehat{e}_{\bold\psi}\|_{\bold{L}^q(\Omega)}.
\end{align*}
By combining this with \eqref{eqn:psietaq>2} for the case $q\geq2$, we obtain with $c=c(q,f,g,\Omega)$
\begin{align*}
&\|\widehat{e}_{\bold\psi}\|_{\bold{L}^q(\Omega)}\\
   \leq &c\left\{\begin{aligned}
    &\|\boldsymbol{\tau}^*-\nabla\times\widehat{\boldsymbol{\psi}}\|_{\bold{L}^q(\Omega)}^{\frac{q}{2}}+\inf_{\boldsymbol{\psi}_\eta\in\mathcal{N}_{\boldsymbol{\psi}}}\|\nabla\times\boldsymbol{\psi}_{\eta}-\boldsymbol{\tau}^*\|_{\bold{L}^q(\Omega)}^{\frac{q}{2}}+\sup_{\psi_{\eta} \in \mathcal{N}_{\boldsymbol{\psi}}}\big|L_{\rm s}(\boldsymbol{\psi}_{\eta})-\widehat{L}_{\rm s}(\boldsymbol{\psi}_{\eta})\big|^{\frac{1}{2}}, &&1<q<2,\\
    &\|\boldsymbol{\tau}^*-\nabla\times\widehat{\boldsymbol{\psi}}\|_{\bold{L}^q(\Omega)}^{\frac{2}{q}}+\inf_{\boldsymbol{\psi}_\eta\in\mathcal{N}_{\boldsymbol{\psi}}}\|\nabla\times\boldsymbol{\psi}_{\eta}-\boldsymbol{\tau}^*\|_{\bold{L}^q(\Omega)}^{\frac{2}{q}}+\sup_{\psi_{\eta} \in \mathcal{N}_{\boldsymbol{\psi}}}\big|L_{\rm s}(\boldsymbol{\psi}_{\eta})-\widehat{L}_{\rm s}(\boldsymbol{\psi}_{\eta})\big|^{\frac{1}{q}}, &&q\geq2.
\end{aligned}\right.
\end{align*}
The estimate \eqref{prop:bound_pf05} implies 
\begin{equation*}
\|\nabla\times\widehat{\boldsymbol{\psi}}-\boldsymbol{\tau}^*\|_{\bold{L}^{q}(\Omega)} = \|\nabla\times\widehat{\boldsymbol{\psi}}-\nabla\times\boldsymbol{\psi}^*\|_{\bold{L}^{q}(\Omega)} \leq c\|\phi_{\widehat{\theta}}-\phi^*\|_{W^{1,q}(\Omega)}^{\min\{\frac{1}{q},\frac{1}{2}\}}.
\end{equation*}
The proof is completed by collecting the preceding three estimates and Lemma \ref{lem:decomp}.
\end{proof}

By appropriately estimating the approximation error $\mathcal{E}_{\rm app}$ and the statistical error $\mathcal{E}_{\rm stat}$ (under the following a priori regularity assumption and the boundedness condition), we can derive an error bound on the DVNN approximation $\boldsymbol{\sigma}_{\widehat{\theta},\widehat{\eta}}$. The proof is lengthy and is given in Appendix \ref{append}.
\begin{assumption}\label{assump:regularity}
 {\rm(i)} $\phi^*\in W^{3,q}(\Omega)$, and $\boldsymbol{\psi}^*\in\bold{W}^{2,q}(\Omega)$; {\rm(ii)}
$f\in L^\infty(\Omega)$, and $g\in L^\infty(\partial\Omega)$.
\end{assumption}
\begin{theorem}\label{thm:finalthm}
Under Assumptions \ref{assumption1}--\ref{assump:regularity}, for any $\epsilon>0$, small $\zeta>0$ and $\mu>0$, if the numbers $N_d$ and $N_b$ of the sampling points are chosen such that
{\small
\begin{equation}\label{eqn:ndnb}
    N_d=\left\{\begin{aligned}
    &\mathcal{O}\left(\epsilon^{-\frac{72q^2L+36qL-12q^2+16}{q(1-\mu)}}\right),1<q<2,\\
    &\mathcal{O}\left(\epsilon^{-\frac{(18qL+6q)\max\{q,4\}+4q}{1-\mu}}\right),q\geq2,
\end{aligned}\right.\,\, \mbox{and}\,\, 
N_b=\left\{\begin{aligned}
    &\mathcal{O}\left(\epsilon^{-\frac{(36L+18L/q-6)\max\{q^2,2\}+16}{q(1-\mu)}}\right),1<q<2,\\
    &\mathcal{O}\left(\epsilon^{-\frac{36qL+18L-2q}{1-\mu}}\right),q\geq2,
\end{aligned}\right.
\end{equation}}
then with probability at least $1- 4\zeta$ and $c=c(q,f,g,L,\Omega,\lambda)$, there holds
\begin{equation}
\|\boldsymbol{\sigma}^*-\boldsymbol{\sigma}_{\widehat{\theta},\widehat{\eta}}\|_{\boldsymbol{L}^q(\Omega)}\leq c\left\{
\begin{aligned}
    &\epsilon^{\frac{q}{4}}+\epsilon^{\frac{1}{4(1-\mu)}}\log^{\frac{1}{4}}\tfrac{1}{\epsilon}+\epsilon^{\frac{2}{1-\mu}}\log^{\frac{1}{4}}\tfrac{1}{\zeta},\quad1<q<2,\\
    &\epsilon^{\frac{2}{q^2}}+\epsilon^{\frac{1}{2q(1-\mu)}}\log^{\frac{1}{2q}}\tfrac{1}{\epsilon}+\epsilon^{\frac{4}{q(1-\mu)}}\log^{\frac{1}{2q}}\tfrac{1}{\zeta},\quad q\geq2.
\end{aligned}\right.
\end{equation}
\end{theorem}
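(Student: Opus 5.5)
The plan is to start from Theorem~\ref{thm:errordecom}, which bounds the error by $c(\mathcal{E}_{\rm app}+\mathcal{E}_{\rm stat})$. It then suffices to bound four quantities in terms of the network size and the sample sizes $N_d,N_b$:
\begin{itemize}
\item the $W^{2,q}$ approximation error of $\phi^*$ over $\mathcal{N}_\phi$;
\item the $\bold{L}^q$ approximation error of $\boldsymbol{\tau}^*=\nabla\times\boldsymbol{\psi}^*$ over $\mathcal{N}_{\boldsymbol{\psi}}$;
\item the two uniform deviations $\sup|L_{\rm p}-\widehat{L}_{\rm p}|$ and $\sup|L_{\rm s}-\widehat{L}_{\rm s}|$.
\end{itemize}
Once these are in hand, the last step is to choose the architecture parameters (width $W$, bound $B$) as functions of $\epsilon$ and then pick $N_d,N_b$ as in \eqref{eqn:ndnb}, so that every contribution is $O(\epsilon)$ up to logarithms.

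\emph{Approximation error.} Under Assumption~\ref{assump:regularity}(i) I will use a known tanh-network approximation result in Sobolev norms, e.g.\ De Ryck--Lanthaler--Mishra type estimates. For $\phi^*\in W^{3,q}$ and error $\epsilon$ in $W^{2,q}$, it gives a network of fixed depth with width $W=\mathcal{O}(\epsilon^{-3})$ (in $d=3$) and weights bounded by $B=\mathcal{O}(\epsilon^{-\kappa})$ for some explicit $\kappa$. Likewise, $\boldsymbol{\psi}^*\in \bold{W}^{2,q}$ is approximated in $\bold{W}^{1,q}$, and hence $\nabla\times\boldsymbol{\psi}_\eta$ approximates $\boldsymbol{\tau}^*$ in $\bold{L}^q$. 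Inserting these into $\mathcal{E}_{\rm app}$ produces the terms $\epsilon^{q/4}$ and $\epsilon^{2/q^2}$.

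\emph{Statistical error.} This is the main obstacle. First, Assumption~\ref{assump:regularity}(ii), together with the parameter bound $B$, gives uniform $L^\infty$ bounds on $\phi_\theta$, on its derivatives up to second order, and on $\nabla\times\boldsymbol{\psi}_\eta$. These bounds grow polynomially in $B$ and $W$, with exponent proportional to $L$; this is the source of the $L$-dependence in \eqref{eqn:ndnb}. The losses $L_{\rm p}$ are $q$-th roots of expectations. Since $|a^{1/q}-b^{1/q}|\le|a-b|^{1/q}$ for $q\ge1$, the deviation reduces to the deviation of the sample means of the integrands $|\Delta\phi_\theta-f|^q$, $|\phi_\theta|^q$ and $|\nabla_t\phi_\theta|^q$, at the cost of a $1/q$ power. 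I will bound these uniform deviations by
\begin{itemize}
\item symmetrization, giving a bound by Rademacher complexity;
\item Dudley's entropy integral, with covering numbers controlled through Lipschitz continuity of the integrands in the parameters (a Lipschitz constant of order $(WB)^{cL}$ for tanh networks);
\item McDiarmid's inequality, to pass to high probability with confidence $\zeta$ for each of the four empirical terms, which accounts for the probability $1-4\zeta$.
\end{itemize}
This yields deviations of order $N^{-1/2}\sqrt{\log N}$ times polynomial factors in $W,B$, plus $N^{-1/2}\sqrt{\log(1/\zeta)}$. The $\log(1/\epsilon)$ and $\log(1/\zeta)$ factors in the final bound come from these terms after taking the powers in $\mathcal{E}_{\rm stat}$. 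The term $L_{\rm s}$ is handled the same way: its boundary part is linear in $\nabla\times\boldsymbol{\psi}_\eta\cdot\boldsymbol{n}$ with bounded $g$, and $\nabla\phi_{\widehat\theta}$ is a fixed element of a class that is already controlled.

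\emph{Balancing.} I then substitute $W,B$ as powers of $\epsilon$ and require each deviation to be at most $\epsilon^{1/(1-\mu)}$ times the logarithmic factors. The small $\mu>0$ absorbs the $\sqrt{\log N}$ losses, which is what fixes the exponents of $N_d$ and $N_b$. Finally, raising to the powers $q/4, 1/2$ (for $1<q<2$) or $2/q^2, 1/q$ (for $q\ge2$) from Theorem~\ref{thm:errordecom} yields the stated rates. The hardest bookkeeping will be tracking the $L$-dependent Lipschitz exponent through the second-derivative terms $\Delta\phi_\theta$; I would do this by induction over the layers, using bounded derivatives of tanh.
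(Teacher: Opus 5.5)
Your plan is essentially the paper's own proof. It uses Theorem~\ref{thm:errordecom}, a tanh Sobolev approximation lemma for $\mathcal{E}_{\rm app}$, and Rademacher complexity via Dudley's integral with covering numbers from the parameter-Lipschitz bounds of Lemma~\ref{lem:NN-Lip}, closed by a PAC/concentration bound on each of the four empirical terms (whence $1-4\zeta$); your reduction $|a^{1/q}-b^{1/q}|\le|a-b|^{1/q}$ for the $q$-th roots in $L_{\rm p}$ is a step the paper's splitting of $\mathcal{E}_{\rm stat}$ skips, since it raises the sample-mean deviations directly to $s_d$. One correction: $\mu$ does not absorb the $\sqrt{\log N}$ factors (those survive as the $\log\frac{1}{\epsilon}$ terms in the final bound) but is the loss in the G\"uhring--Raslan rate of Lemma~\ref{lem:tanh-approx}, which, unlike the De Ryck--Lanthaler--Mishra estimates (stated for $W^{s,\infty}$ targets), applies directly to the $W^{3,q}$ and $\bold{W}^{2,q}$ regularity of Assumption~\ref{assump:regularity} and gives $\mathcal{O}(\epsilon^{-\frac{3}{1-\mu}})$ nonzero weights with weight bound $\mathcal{O}(\epsilon^{-\frac{15+9/q}{1-\mu}})$ --- precisely the inputs that produce the exponents in \eqref{eqn:ndnb}.
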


\section{Numerical experiments and discussions }\label{sec:experiment}

In this section, we present numerical results to illustrate the accuracy and robustness of the DVNN. $N_r=10,000$ points in $\Omega$ and $N_b=2,000$ points on $\partial\Omega$ are selected uniformly at random in the empirical losses $\widehat{L}_{\rm p}(\phi_\theta)$ and $\widehat{L}_{\rm s}(\boldsymbol{\psi}_\eta)$. The loss $\widehat{L}_{\rm p}(\phi_\theta)$ is minimized using a two-stage optimization strategy~\cite{wang2025highprecisionpinnsunbounded}: ADAM~\cite{KingmaBa:2015} (with default parameters), followed by self-scaled BFGS (SSBFGS) \cite{OrenLuenberger:1974};  the loss $\widehat{L}_{\rm s}(\psi_\eta)$ is minimized using  ADAM only. We measure the accuracy of an NN approximation $\boldsymbol{\sigma}_{\widehat{\theta},\widehat{\eta}}$ of the exact flux $\boldsymbol{\sigma}^*$ by the relative $\bold{L}^q(\Omega)$-error  $e_{\bold{\sigma}}=\|\boldsymbol{\sigma}^*-\boldsymbol{\sigma}_{\widehat{\theta},\widehat{\eta}}\|_{\bold{L}^q(\Omega)}/\|\boldsymbol{\sigma}^*\|_{\bold{L}^q(\Omega)}$, and the relative $L^p$ error  $e_u=\|\hat{u}-u^*\|_{L^p(\Omega)}/\|u^*\|_{L^p(\Omega)}$ for the approximate potential $\hat{u}$.  The Python source codes for reproducing all numerical experiments are available in the github repository \url{https://github.com/hhjc-web/plaplace}. First we give one example adapted from~\cite[Example 5.1]{BarrettLiu1993}.

\begin{example}\label{exam1}
 $\Omega=\{x\in\mathbb{R}^3:|x|<1\}$.
The solution is $u(x)=1-|x|^{\frac{p}{p-1}}$ with the source $f(x)=3(\frac{p}{p-1})^{p-1}$ and $g=0$. Consider the following two cases: {\rm(i)} $p=1.1$ and {\rm(ii)} $p=500$.
\end{example}

In case (i), $p$ is close to 1, and 
the $p$-Laplace problem exhibits strong nonlinearity.
Fig.~\ref{fig:exam1.1} shows the neural network approximation $\sigma_{\widehat{\theta},\widehat{\eta},1}$ of the flux component $\sigma_1^\ast$, with the architecture given in Table \ref{table:exam1.1}.
The DVNN clearly yields an accurate approximation  with a relative error $e_{\bold{\sigma}}=2.40\times10^{-3}$.

\begin{figure}[hbt!]
\centering\setlength{\tabcolsep}{2pt}
\begin{tabular}{cccc}
\includegraphics[width=.32\textwidth]{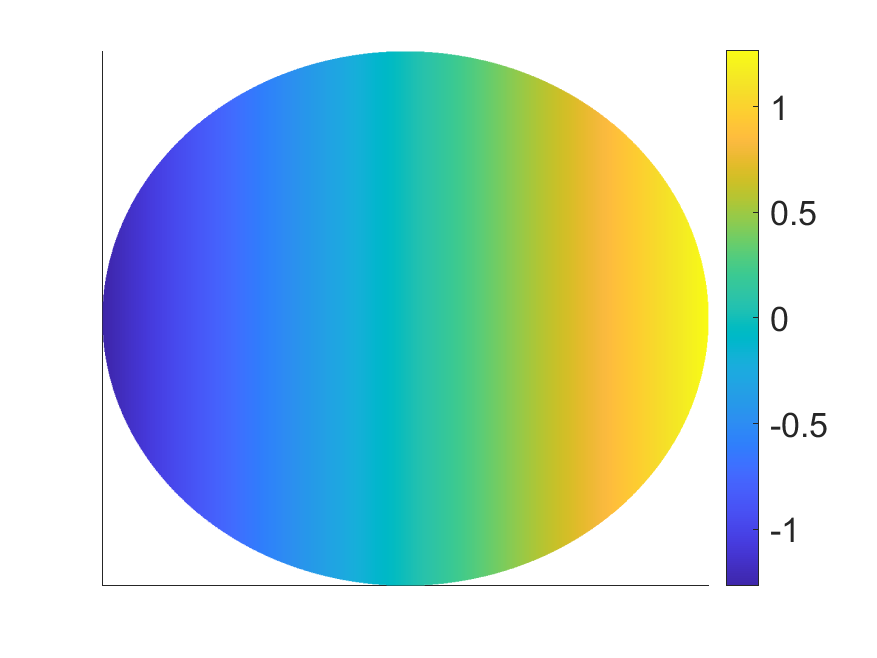} & \includegraphics[width=.32\textwidth]{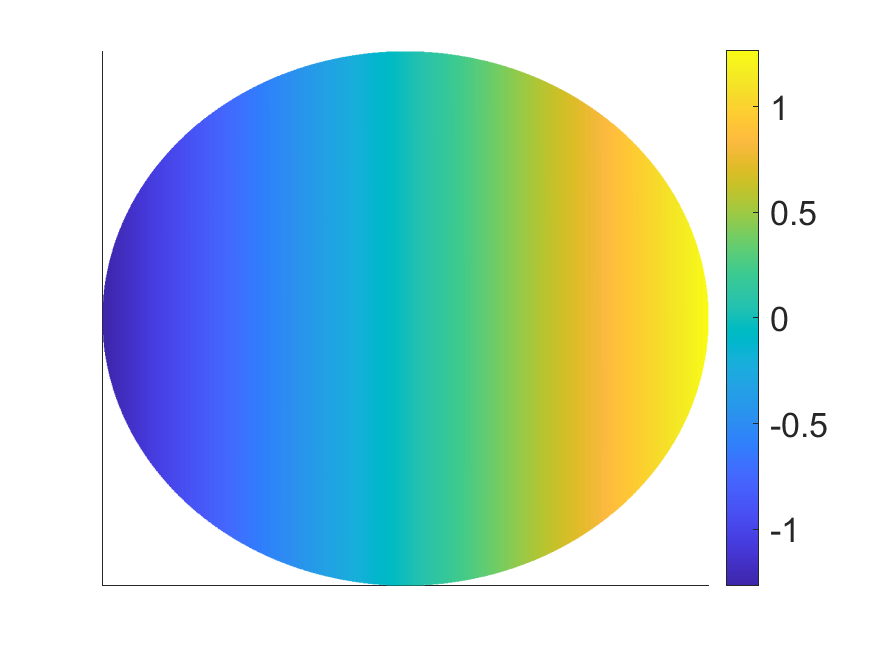} & \includegraphics[width=.32\textwidth]{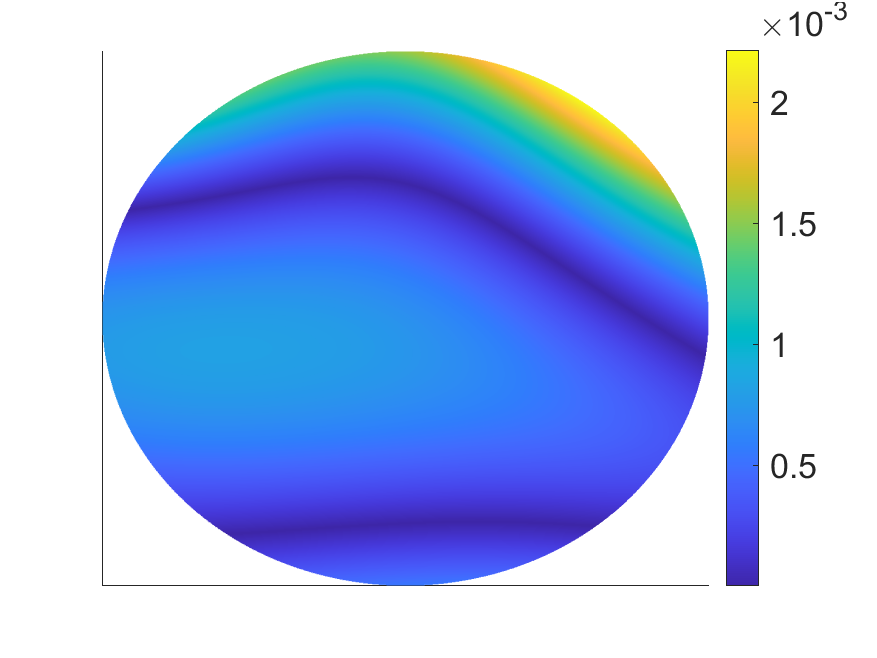}\\
(a) $\sigma_1^*$ & (b) $\sigma_{\widehat{\theta},\widehat{\eta},1}$  & (c) $|\sigma_1^*-\sigma_{\widehat{\theta},\widehat{\eta},1}|$
\end{tabular}
\caption{\label{fig:exam1.1} The DNN approximation of the flux component $\sigma_1^*$ for case (i) in Example~\ref{exam1} (slices at $x_3=0$).}
\end{figure}

With the approximation $\boldsymbol{\sigma}_{\widehat{\theta},\widehat{\eta}}$, one may  approximate the gradient by $\nabla \widehat u = -|\boldsymbol{\sigma}_{\widehat{\theta},\widehat{\eta}}|^{q-2}\boldsymbol{\sigma}_{\widehat{\theta},\widehat{\eta}}$ (cf. \eqref{first-order_sys}) and then the exact solution $u^\ast$ of problem \eqref{problem} at $x_3=0$, using the formula 
$\widehat{u}(x_1,x_2,0)=\widehat{u}(x_1,-\sqrt{1-x_1^2},0)+\int_{-\sqrt{1-x_1^2}}^{x_2}\partial_{x_2}\widehat{u}(x_1,t,0){\rm d}t$
and the trapezoidal rule for evaluating the integral.
Fig.~\ref{fig:exam1.1com} displays the approximations of $u^\ast$ at the slice $x_3=0$ by the DVNN, PINN, DRM and PINN with the mixed formulation (PINN-M), whose loss functions are given, respectively, by
\begin{align*}
L_{{\rm PINN}}(u)&=\|-\nabla \cdot \bigl( |\nabla u|^{p-2} \nabla u\bigl) - f\|_{L^q(\Omega)}+\lambda\|u-g\|_{L^p(\partial\Omega)},\\
L_{{\rm DRM}}(u)&=\tfrac{1}{p}\|\nabla u\|_{\bold{L}^p(\Omega)}^p-( f,u) +\lambda\|u-g\|_{L^p(\partial\Omega)},\\
L_{{\rm PINN-M}}(u,\bold{\sigma})&=\|\bold{\sigma}+\nabla \cdot \bigl( |\nabla u|^{p-2} \nabla u\bigl)\|_{L^q(\Omega)}+\|\nabla\cdot\bold{\sigma}-f\|_{L^q(\Omega)}+\lambda\|u-g\|_{L^p(\partial\Omega)}.
\end{align*}
The DVNN achieves better accuracy for the solution $u^\ast$ than PINN, DRM and PINN-M.
The subpar performance of PINN, DRM and PINN-M is attributed to their highly complex loss landscapes induced by the nonlinear term $|\nabla u|^{p-2}$.
It leads to pathological convergence or premature stagnation of gradient-based optimization algorithms during the training.
The parameters and training results are summarized in Table~\ref{table:exam1.1}. Note that the NN architecture for the DRM differs from the others, since the choice produces better resolution, though the resolution is still not good.

\begin{figure}[hbt!]
\centering\setlength{\tabcolsep}{2pt}
\begin{tabular}{cccc}
\includegraphics[width=.32\textwidth]{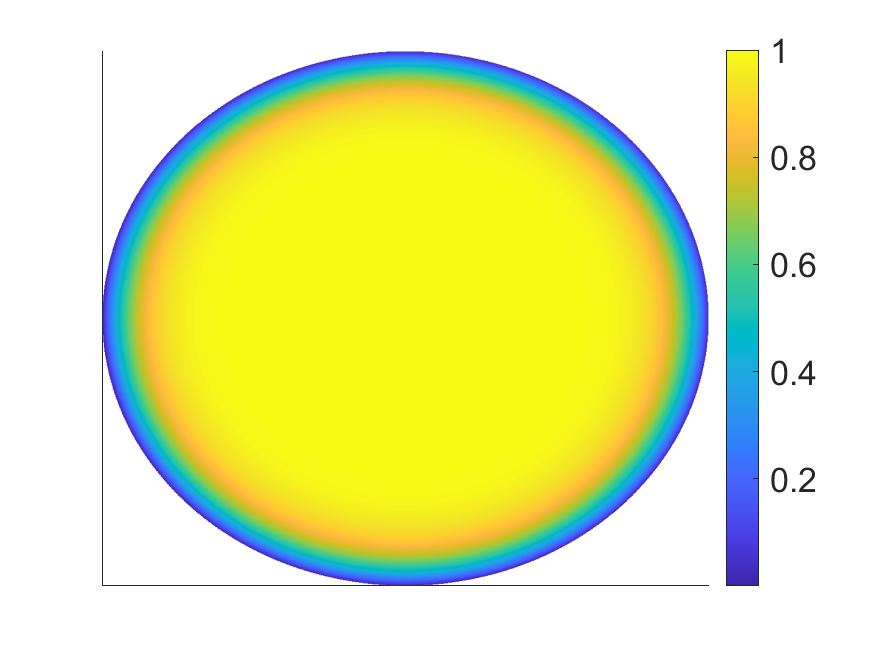} & \includegraphics[width=.32\textwidth]{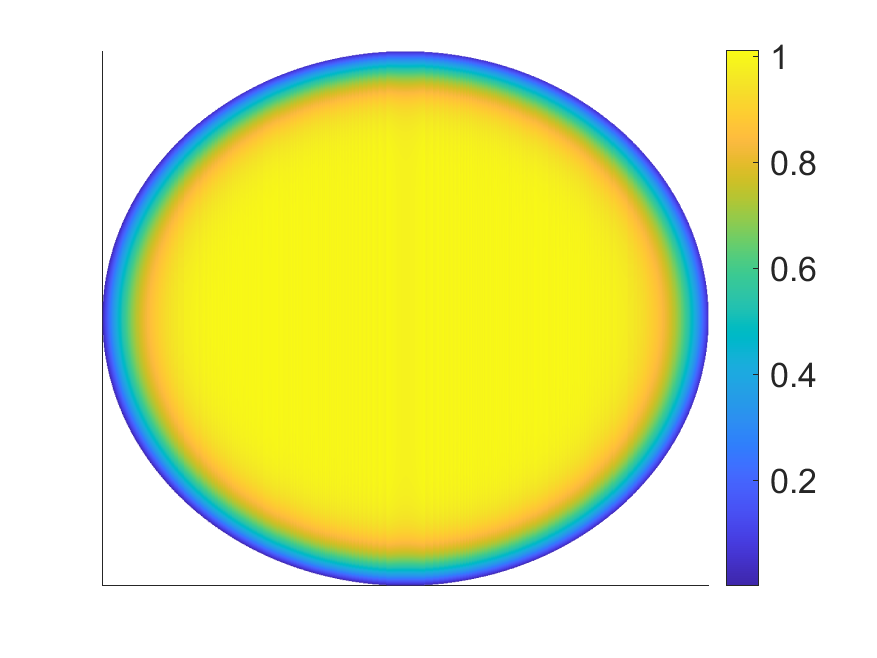} & \includegraphics[width=.32\textwidth]{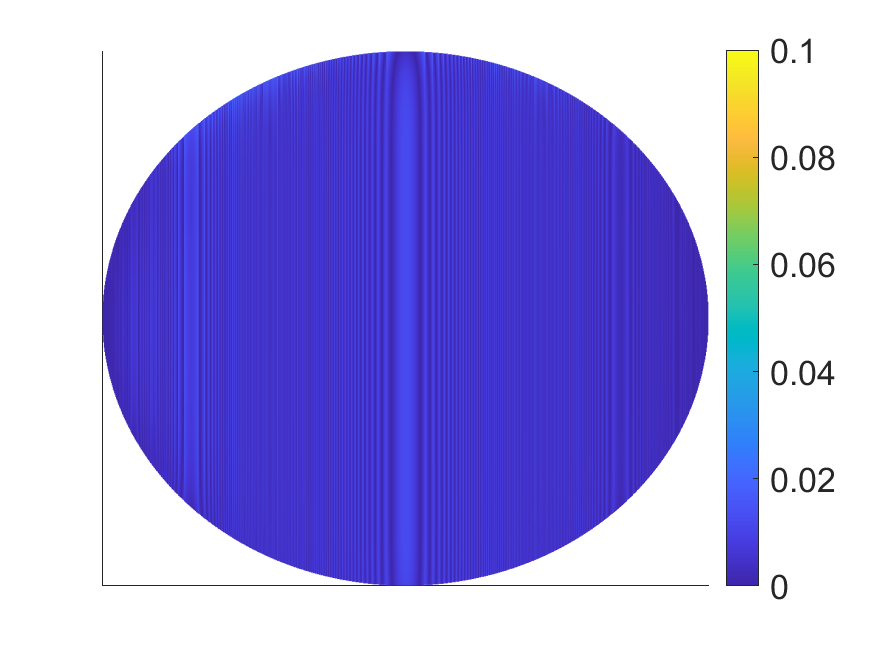}\\
\includegraphics[width=.32\textwidth]{true_solution-11} & \includegraphics[width=.32\textwidth]{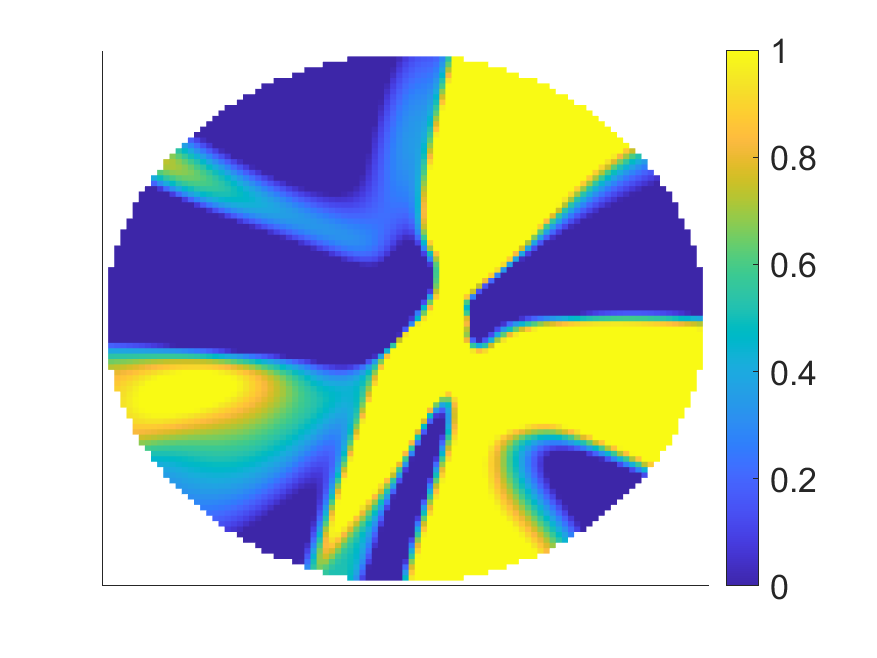} & \includegraphics[width=.32\textwidth]{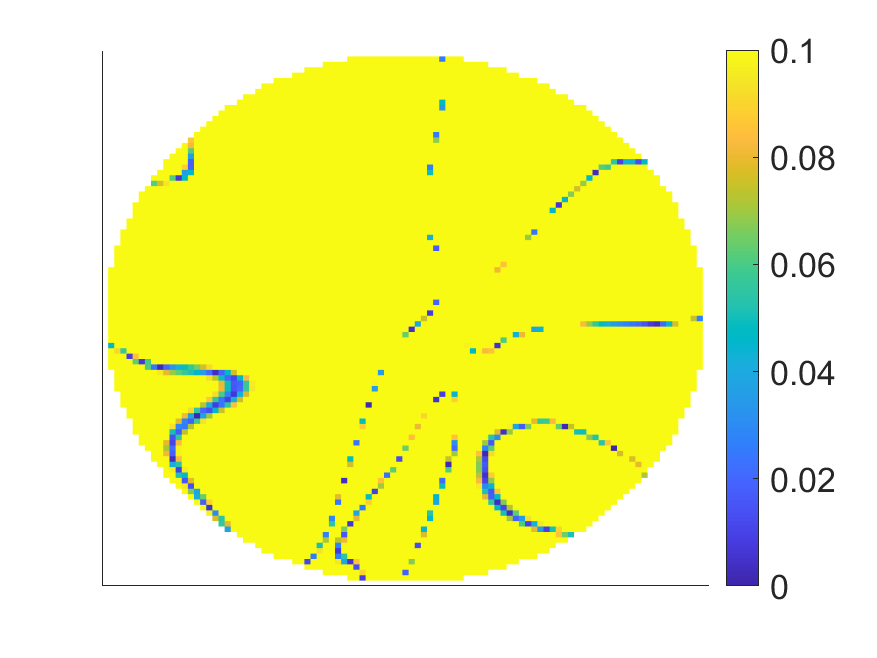}\\
 \includegraphics[width=.32\textwidth]{true_solution-11} & \includegraphics[width=.32\textwidth]{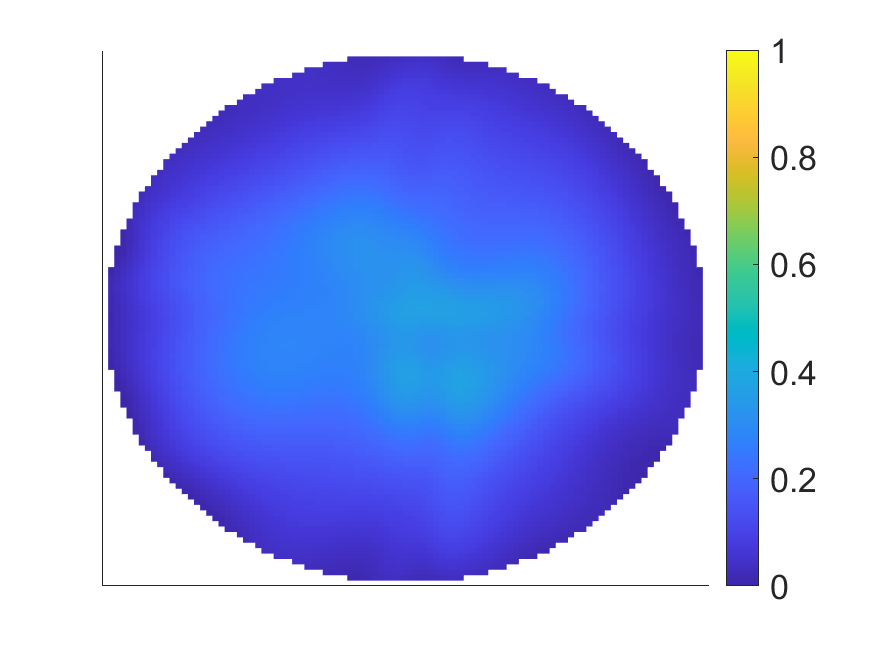} & \includegraphics[width=.32\textwidth]{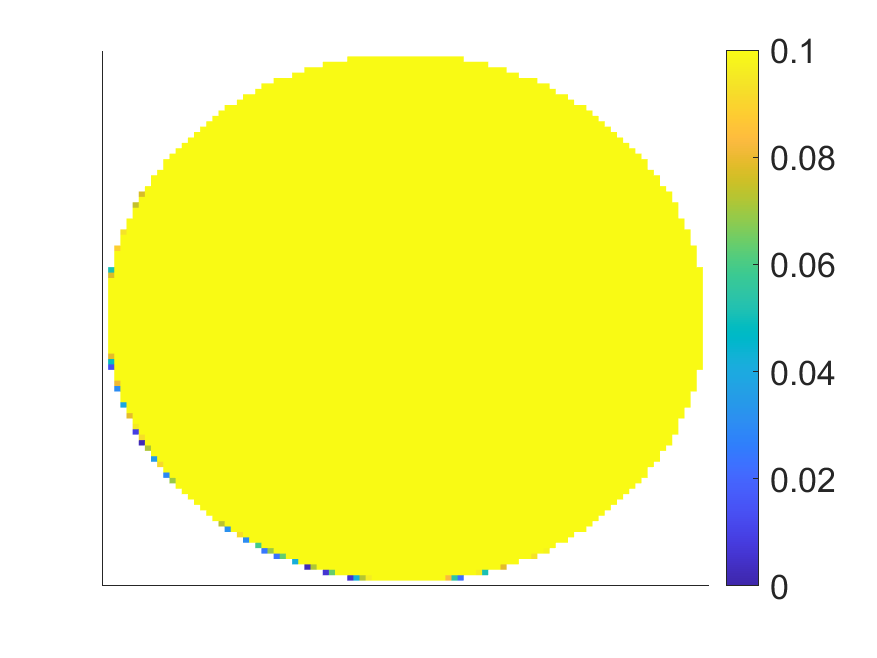}\\
\includegraphics[width=.32\textwidth]{true_solution-11} & \includegraphics[width=.32\textwidth]{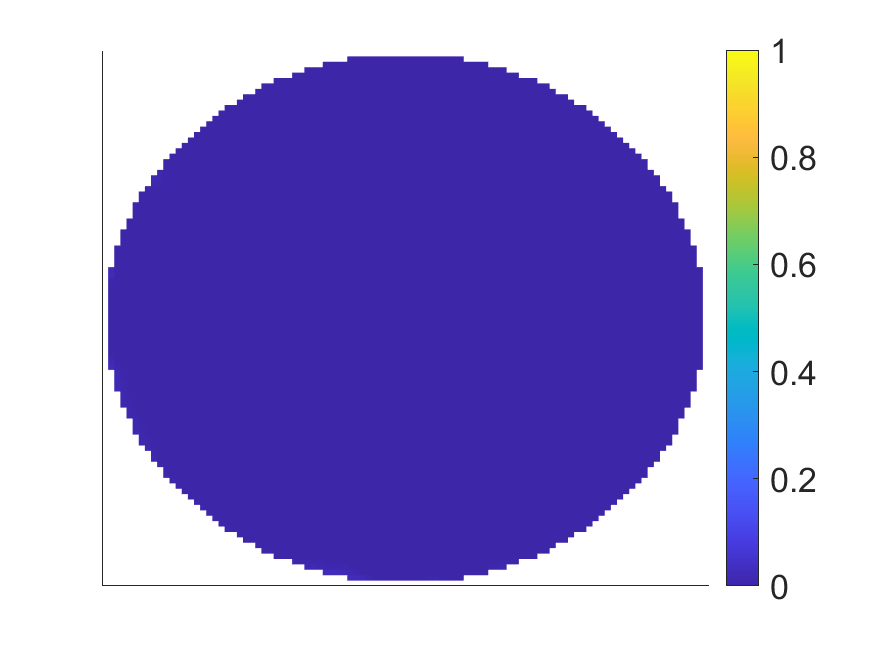} & \includegraphics[width=.32\textwidth]{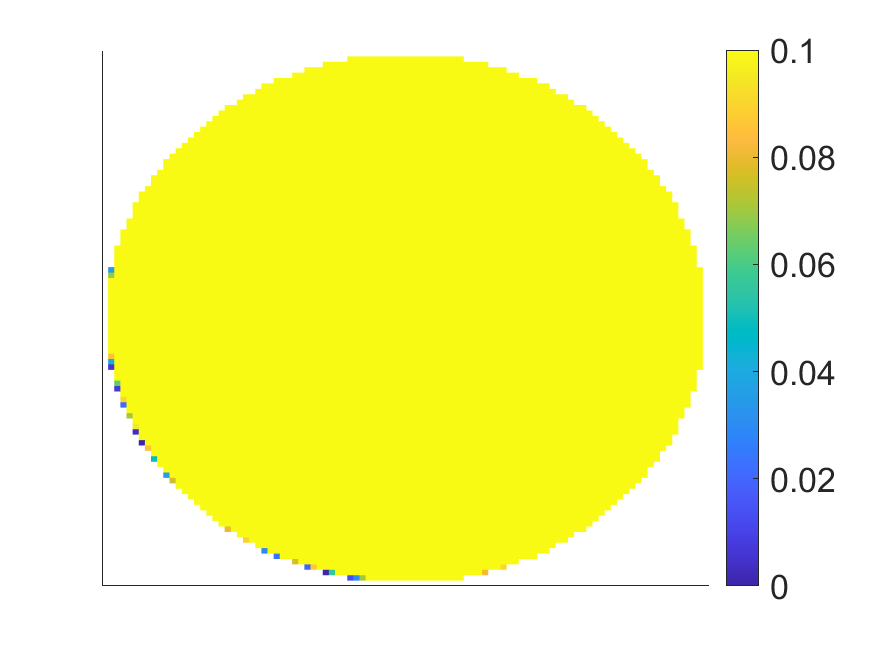}\\
(a) $u^*$ & (b) $\widehat{u}$  & (c) $|u^* - \widehat{u}|$
\end{tabular}
\caption{\label{fig:exam1.1com} The numerical solutions for case (i) in Example~\ref{exam1} (slices at $x_3=0$) by the DVNN, PINN, DRM and PINN-M (from top to bottom).}
\end{figure}

The high accuracy of the DVNN ($e_{\bold\sigma}=2.40\times 10^{-3}$ and $e_u=6.22\times 10^{-3}$) contrasts sharply with the failure of the PINN and PINN-M.  The PINN treats the nonlinear term $|\nabla u|^{p-2}$ explicitly, leading to an extremely complex optimization landscape for $p=1.1$. The DRM, which directly approximates $u$, only reduces the error to $\mathcal{O}(10^{-1})$. In sharp contrast, by the two-step procedure with the gradient-curl representation, the DVNN avoids computing the singular nonlinear term $|\nabla u|^{p-2}$ in the loss. These observations highlight the importance of exactly preserving the divergence-free constraint and properly handling the strongly nonlinear term.

\begin{table}[hbt!]
\centering
\begin{threeparttable}
\caption{\label{table:exam1.1}The parameters and results for Example~\ref{exam1} (i). 
}
\centering
\begin{tabular}[5pt]{c|c|c|c|c|c}
\toprule
method &  architecture & learning rate & epoch & $e_u$ & $e_{\boldsymbol{\sigma}}$\\
\midrule
DVNN & \makecell{$\phi$: 3-20-20-20-1\\
$\boldsymbol{\psi}$: 3-20-20-20-3} & \makecell{$5 \times 10^{-3}$\\
$5 \times 10^{-3}$} & \makecell{ 21k\\
 10k} & 6.22e-3  & 2.40e-3 \\ 
\hline
PINN & 3-20-20-20-1 & $5 \times 10^{-3}$ & 10k & 2.22e0  & 1.59e0\\
\hline
DRM & 3-20-20-20-20-20-1 & $5 \times 10^{-3}$ & 10k & 8.47e-1  & 5.24e-1\\
\hline
PINN-M & \makecell{$u$: 3-20-20-20-1\\
$\boldsymbol{p}$: 3-20-20-20-3} & $5 \times 10^{-3}$ & 10k & 1.71e0  & 2.01e0\\
\bottomrule
\end{tabular}
\end{threeparttable}
\end{table}

The variations of the empirical losses $\widehat{L}_{\rm p}$ (for the irrotational component $\phi$) and $\widehat{L}_{\rm s}$ (for the solenoidal component $\bold\psi$) during the training are shown in Fig.~\ref{fig:exam1.1train}(a) and (b), respectively. The loss $\widehat{L}_{\rm p}$ is trained by ADAM for 1,000 epochs, and then SSBFGS for 20,000 epochs, both with learning rate $5\times10^{-3}$.
The training of $\widehat{L}_{\rm s}$ only uses ADAM for 10,000 steps with learning rate $5 \times 10^{-3}$. The loss $\widehat{L}_{\rm p}$ is stable under ADAM, decreases rapidly in the first 1,000 steps by SSBFGS and reaches high precision after SSBFGS refinement, while the loss $\widehat{L}_{\rm s}$ exhibits mild oscillations due to the large $q$. Algorithm \ref{alg:dualform} is clearly very effective in minimizing the loss $\widehat{L}_{\rm s}$ and the error $e_{\boldsymbol{\sigma}}$ decays monotonically, cf. Fig. \ref{fig:exam1.1train} (c).
The DVNN remains stable even in the singular limit $p\to 1^+$.

\begin{figure}[hbt!]
\centering\setlength{\tabcolsep}{2pt}
\begin{tabular}{ccc}
\includegraphics[width=.32\textwidth]{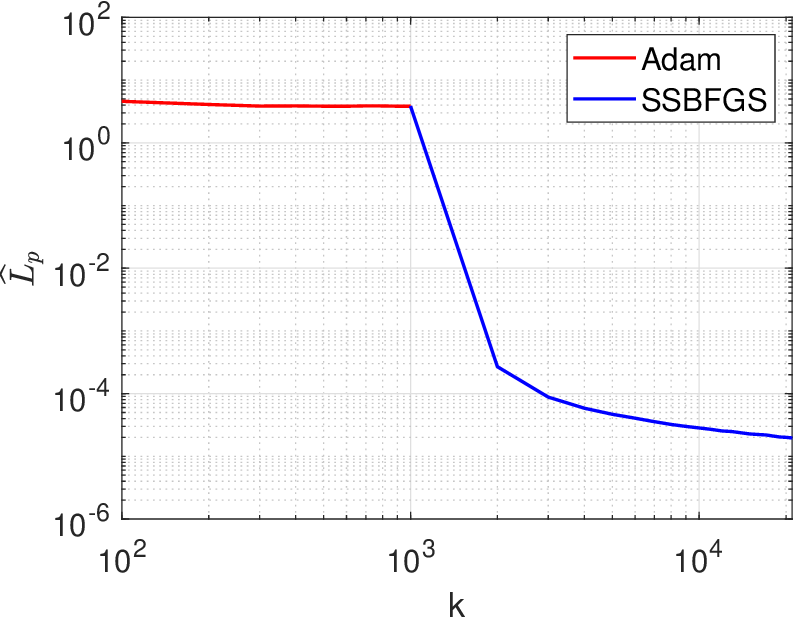} & 
\includegraphics[width=.32\textwidth]{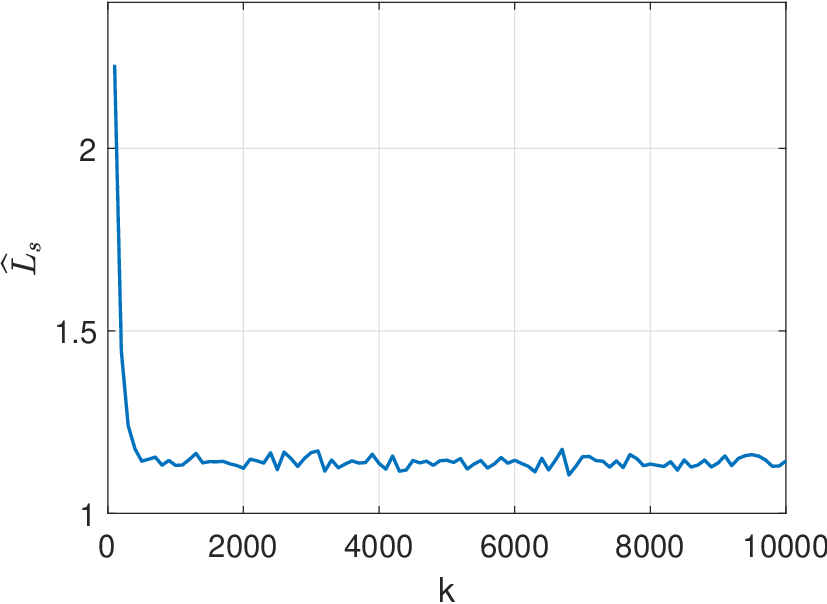} & 
\includegraphics[width=.32\textwidth]{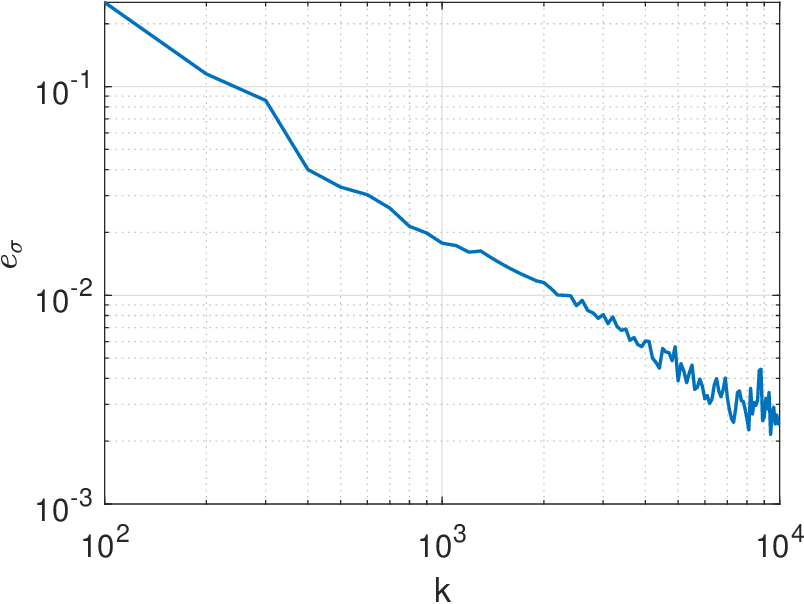}\\
(a) $\widehat {L}_{\rm p}$ & (b) $\widehat{L}_{\rm s}$  & (c) $e_{\boldsymbol{\sigma}}$ 
\end{tabular}
\caption{\label{fig:exam1.1train} The evolution of $\widehat{L}_{\rm p}$, $\widehat{L}_{\rm s}$ and $e_{\boldsymbol{\sigma}}$ versus the iteration $k$ for case (i) in Example~\ref{exam1}.}
\end{figure}

In case (ii), the problem lies in the degenerate limit $p=500$ and is also very challenging in the primal formulation. However, the conjugate exponent $q$ is moderate  ($q\approx 1$) in the dual formulation. Thus, Algorithm \ref{alg:dualform} works even for large values of $p$. When $p=500$, the PINN, DRM, and PINN-M all completely fail, since the explicit presence of $p$ in the losses leads to extremely large terms, making the training infeasible.
We use a 3-20-20-20-1 network to approximate $\phi$ and a 3-20-20-20-20-20-3 network to approximate $\boldsymbol{\psi}$.
The numerical approximation for $\boldsymbol{\sigma}$ is shown in Fig.~\ref{fig:exam1.2}, with a relative error $e_{\boldsymbol{\sigma}} = 8.11 \times 10^{-5}$. The numerical results for the two cases confirm that the DVNN is able to handle both singular and degenerate cases. 

\begin{figure}[hbt!]
\centering\setlength{\tabcolsep}{2pt}
\begin{tabular}{cccc}
\includegraphics[width=.32\textwidth]{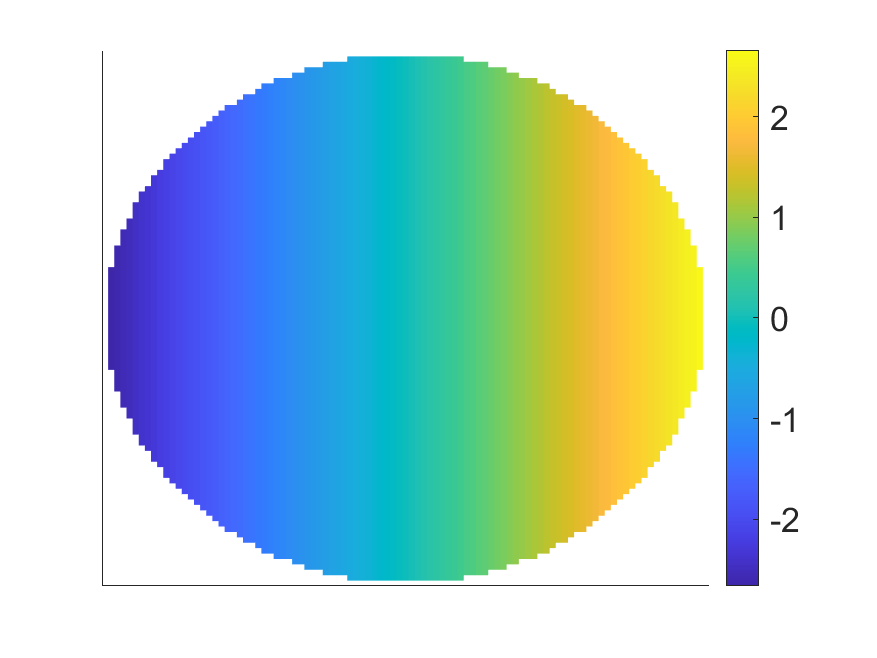} & \includegraphics[width=.32\textwidth]{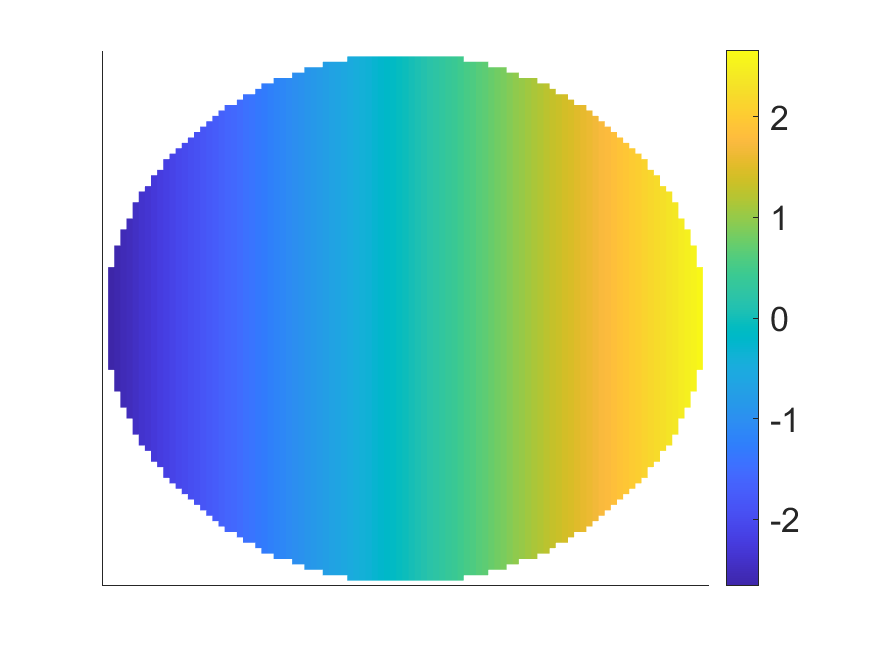} & \includegraphics[width=.32\textwidth]{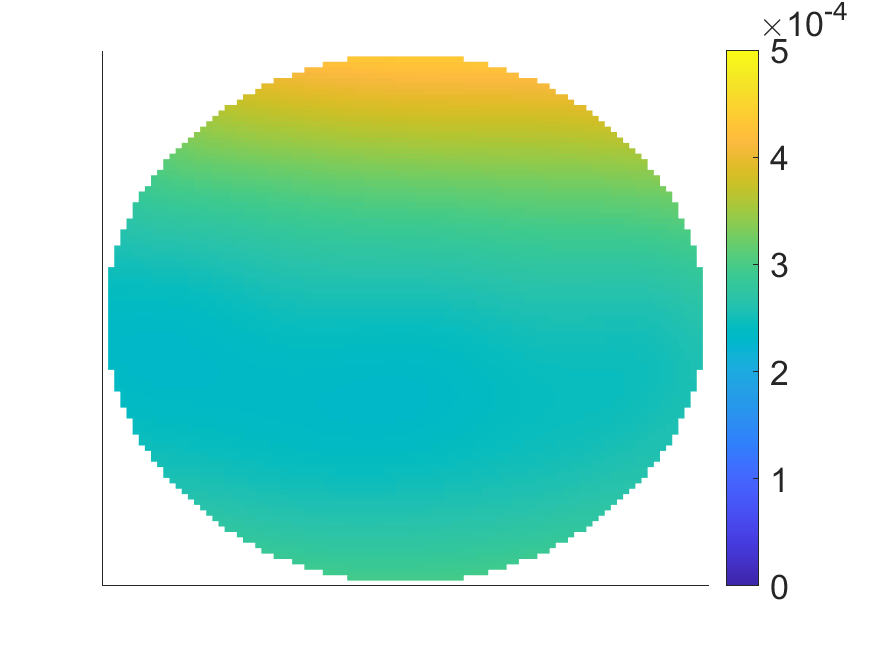}\\
\includegraphics[width=.32\textwidth]{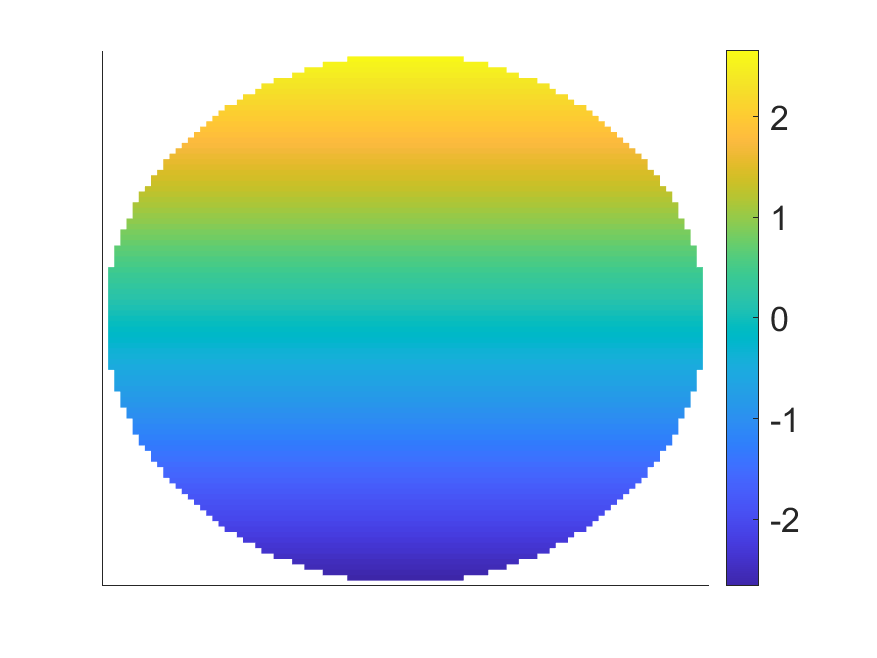} & \includegraphics[width=.32\textwidth]{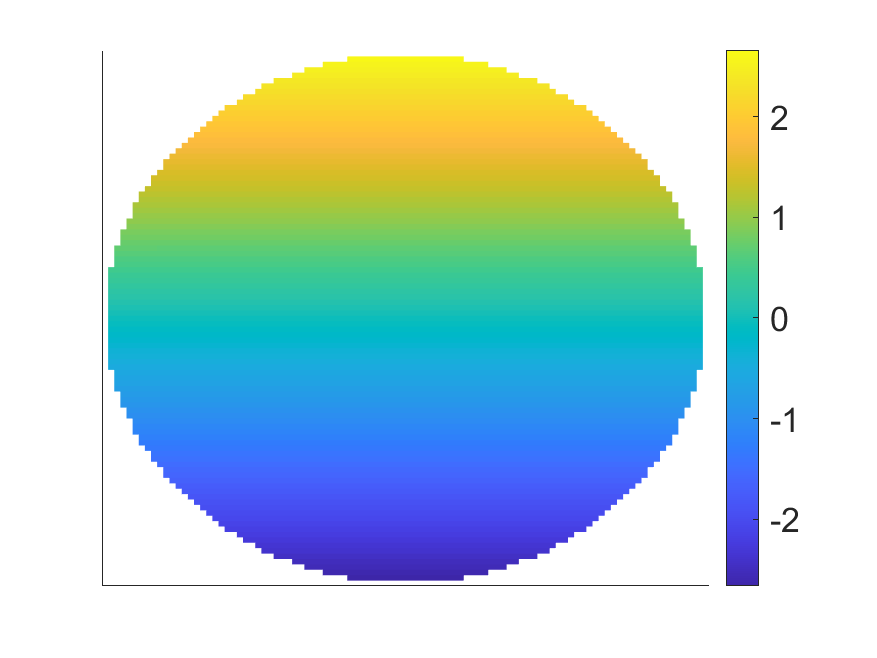} & \includegraphics[width=.32\textwidth]{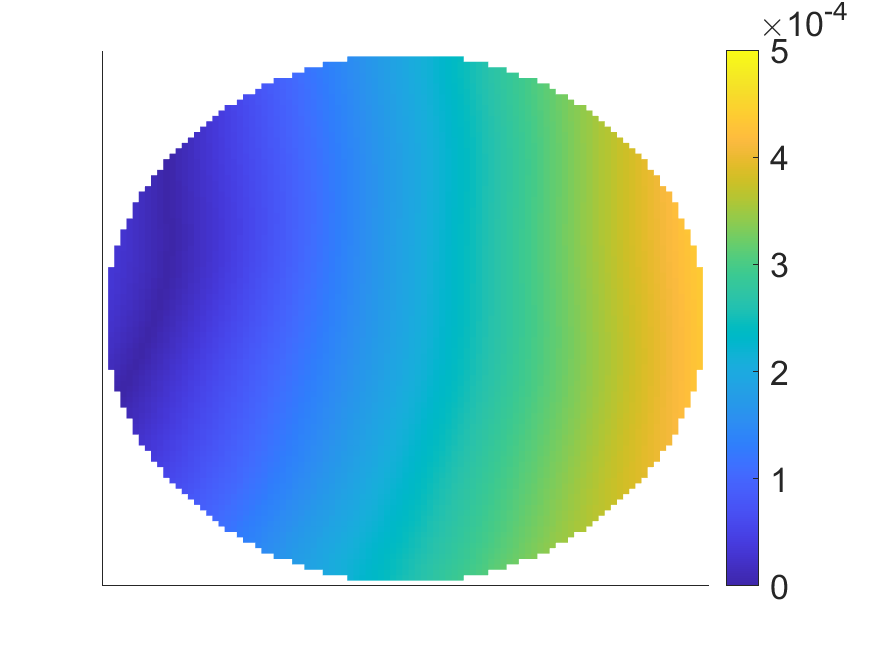}\\
\raisebox{2.5ex}{\hspace{0.8em}\includegraphics[width=.26\textwidth]{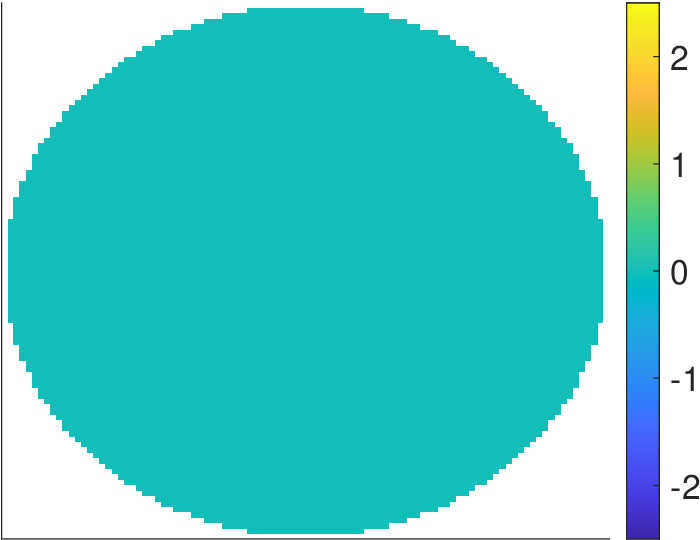}}&
\includegraphics[width=.32\textwidth]{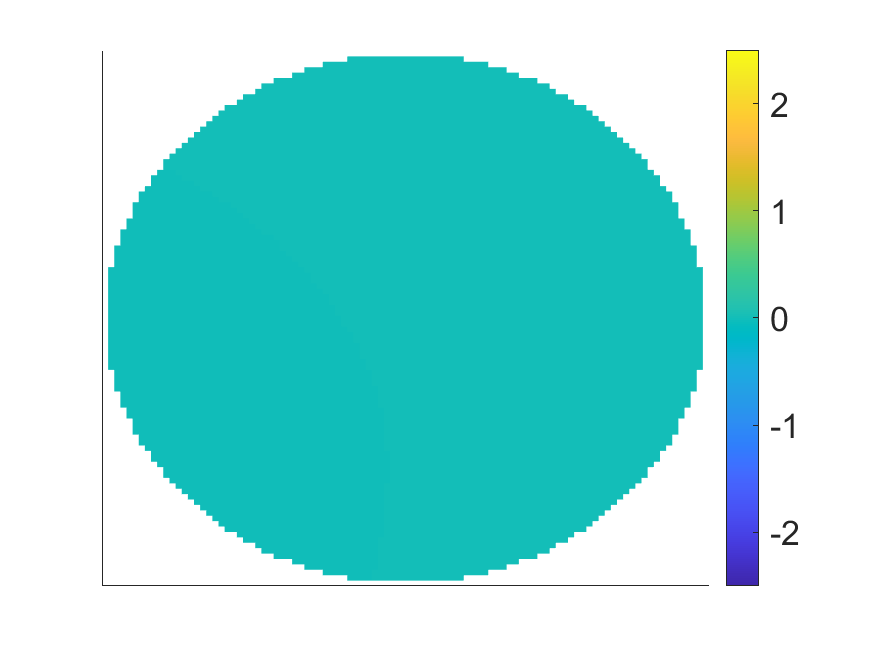} & \includegraphics[width=.32\textwidth]{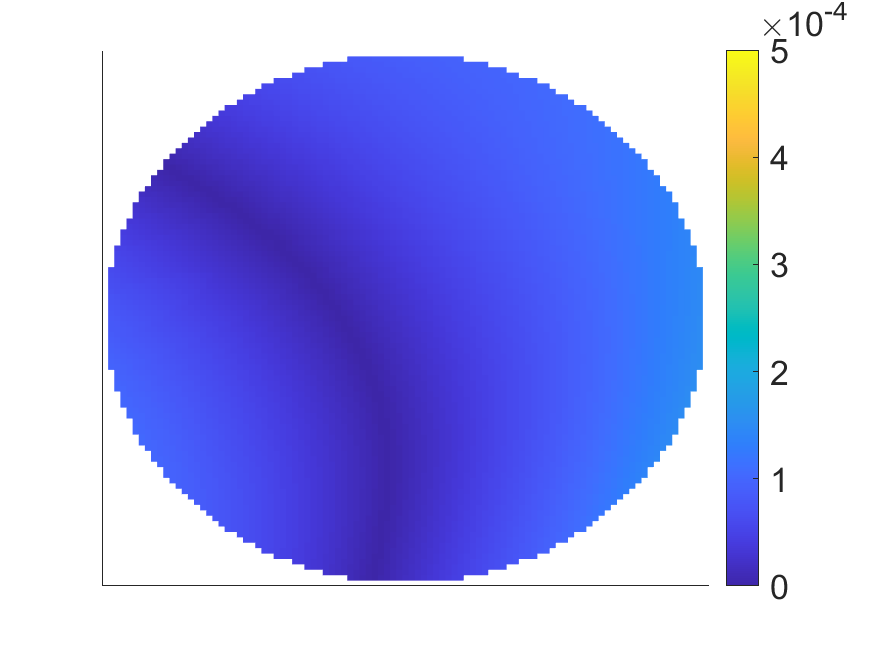}\\
 (a) $\bold{\sigma}^*$ & (b) $\bold{\sigma}_{\widehat{\theta},\widehat{\eta}}$  & (c) $|\bold{\sigma}^*-\bold{\sigma}_{\widehat{\theta},\widehat{\eta}}|$
\end{tabular}
\caption{\label{fig:exam1.2} The DNN approximations of $\boldsymbol{\sigma}^\ast=(\sigma_1^\ast,\sigma_2^\ast,\sigma_3^\ast)$ (from top to bottom) for case (ii) in Example~\ref{exam1} (slices at $x_3=0$).}
\end{figure}

The second example is about the torsion creep problem without an analytic solution.
\begin{example}\label{exam2}
$\Omega=(-1,1)^3$, $f(x)=1$ and  $g=0$.
\end{example}

The problem has no analytic solution, but
$\lim_{p \to \infty} u^\ast_p(x) = d(x, \partial\Omega)$ uniformly in  $\Omega$~\cite[Theorem 1]{Kawohl1990}.
We numerically study the asymptotic behavior of the solution $u$ as $p \to \infty$, with a 3-20-20-20-1 network to approximate $\phi$ and a 3-20-20-20-3 network to approximate $\boldsymbol{\psi}$. The numerical solution $\widehat{u}$ is obtained using the representation $
\widehat{u}(x_1,x_2,0) = \widehat{u}(x_1,-1,0)+\int_{-1}^y\partial_{x_2}\widehat{u}(x_1,t,0){\rm d}t.$
Fig.~\ref{fig:exam2} shows the results for $p=2$, $p=10$, and $p=200$.
When $p = 2$, the problem reduces to the standard Poisson equation, and since $f$ and $g$ are smooth, the solution $u$ is smooth.
At $p=10$, the solution $u$ seems to approach the limit and some non-smooth characteristics emerge across two diagonals.
When $p = 200$, although the non-smoothness on the ridge set is not so sharp, the result in Fig.~\ref{fig:exam2}(d) is very similar to that of the asymptotic solution. Since
the distance function $d(x,\partial\Omega)$ has low Sobolev regularity due to its non-differentiability along two diagonals of $\Omega$, the accuracy is affected by the approximation capability of the neural network (cf. Lemma~\ref{lem:tanh-approx}). The plots for $\nabla u$ at $p=200$ in Fig.~\ref{fig:exam2.3} clearly show the singularity along two diagonals. 

\begin{figure}[hbt!]
\centering\setlength{\tabcolsep}{1pt}
\begin{tabular}{cccc}
\includegraphics[height=2.6cm]{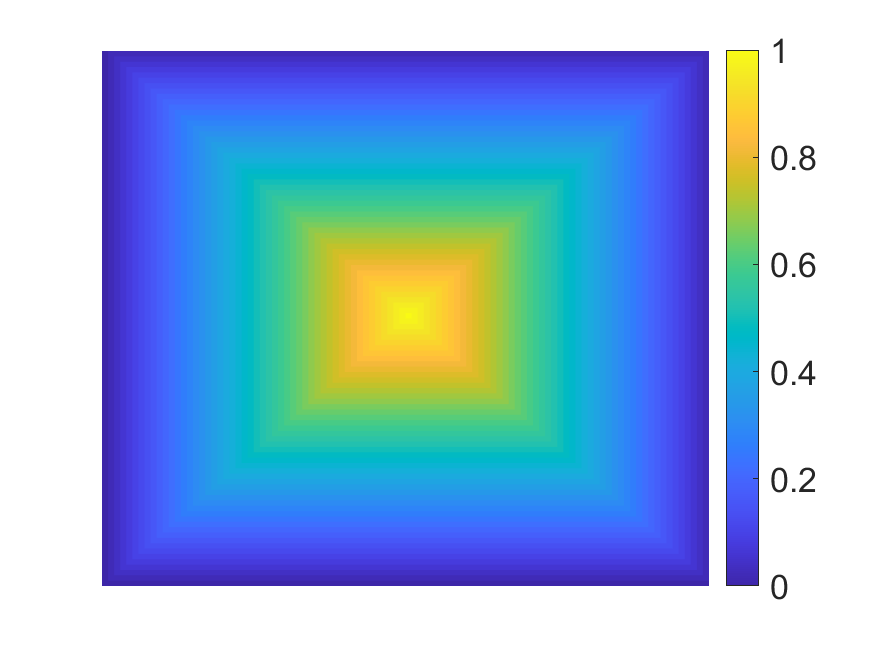} & \includegraphics[height=2.6cm]{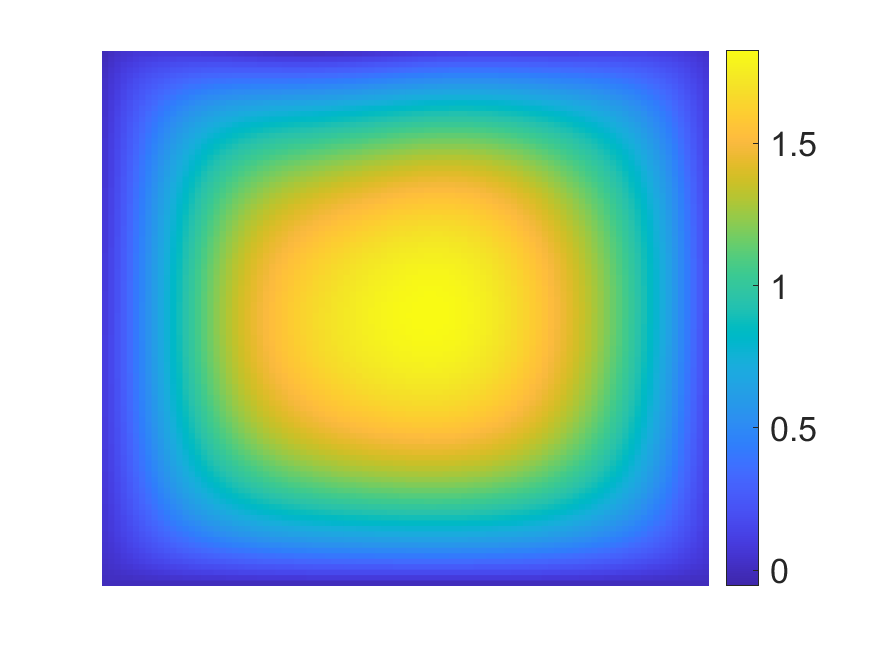} & \includegraphics[height=2.6cm]{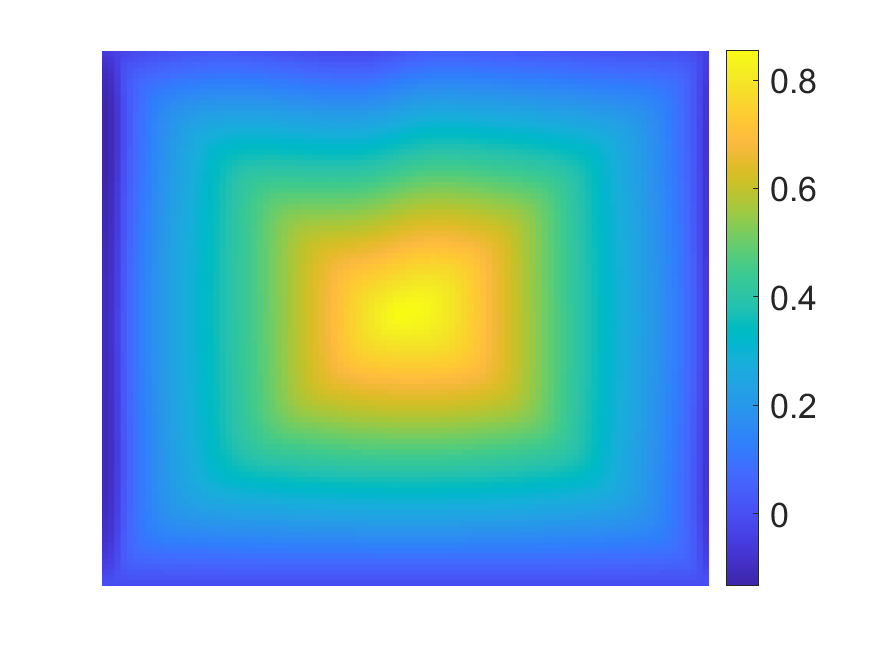}& \includegraphics[height=2.6cm]{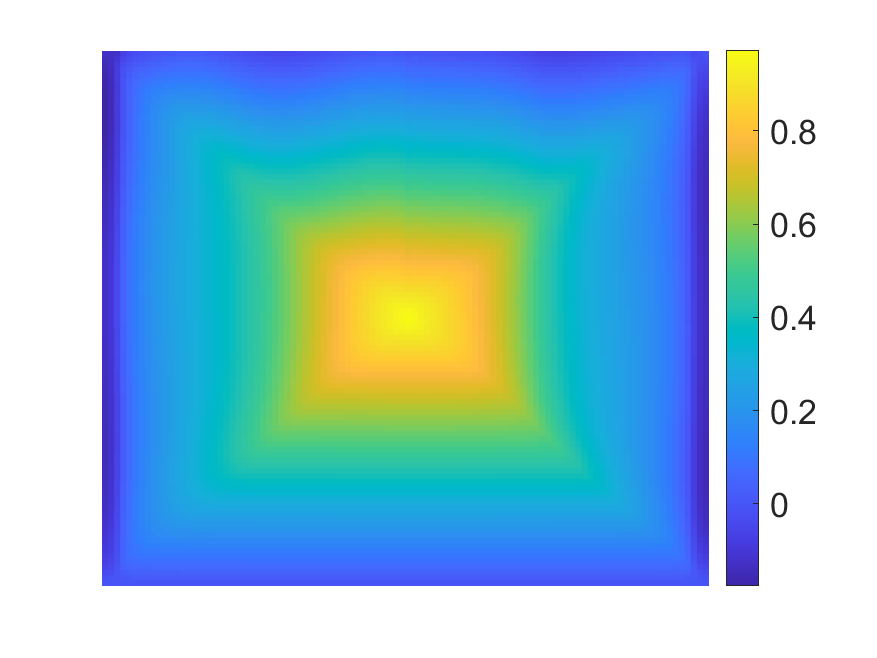}\\
(a) asymptotics & (b) $p=2$  & (c) $p=10$ & (d) $p=200$
\end{tabular}
\caption{\label{fig:exam2} The numerical solutions of $u^\ast$ for Example~\ref{exam2}  (slices at $x_3=0$).}
\end{figure}

\begin{figure}[hbt!]
\centering\setlength{\tabcolsep}{2pt}
\begin{tabular}{ccc}
\includegraphics[width=.32\textwidth]{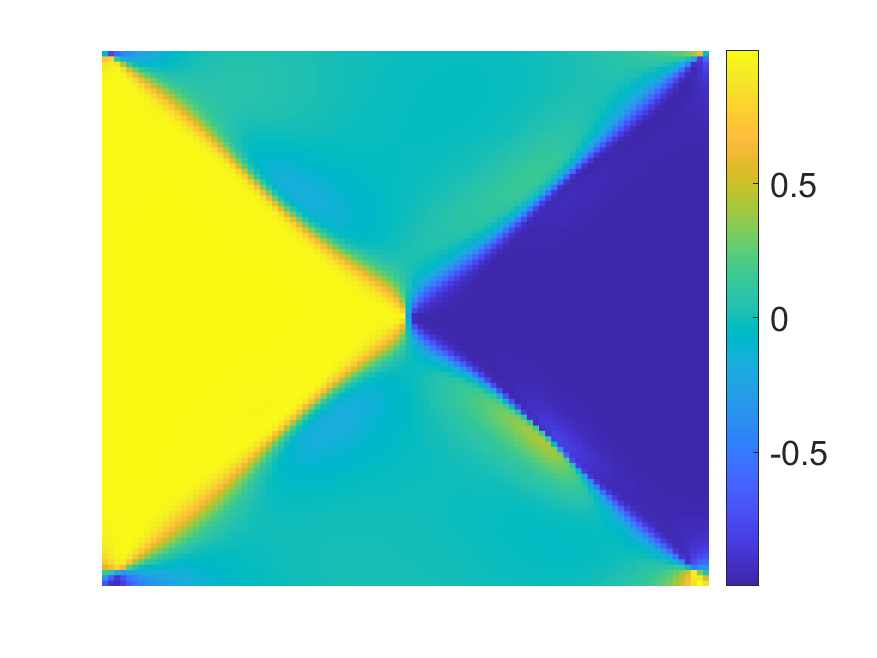} & \includegraphics[width=.32\textwidth]{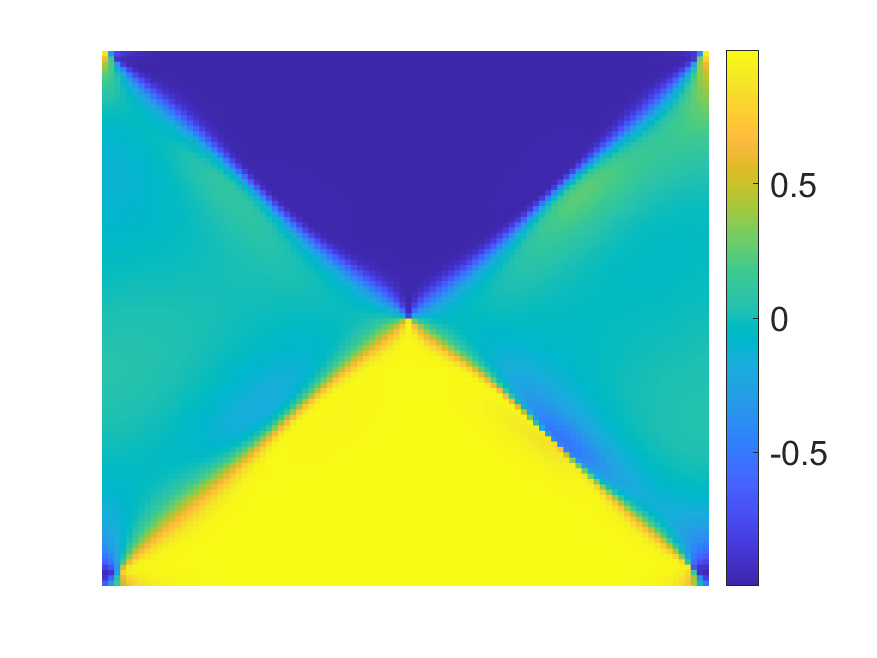} & \includegraphics[width=.32\textwidth]{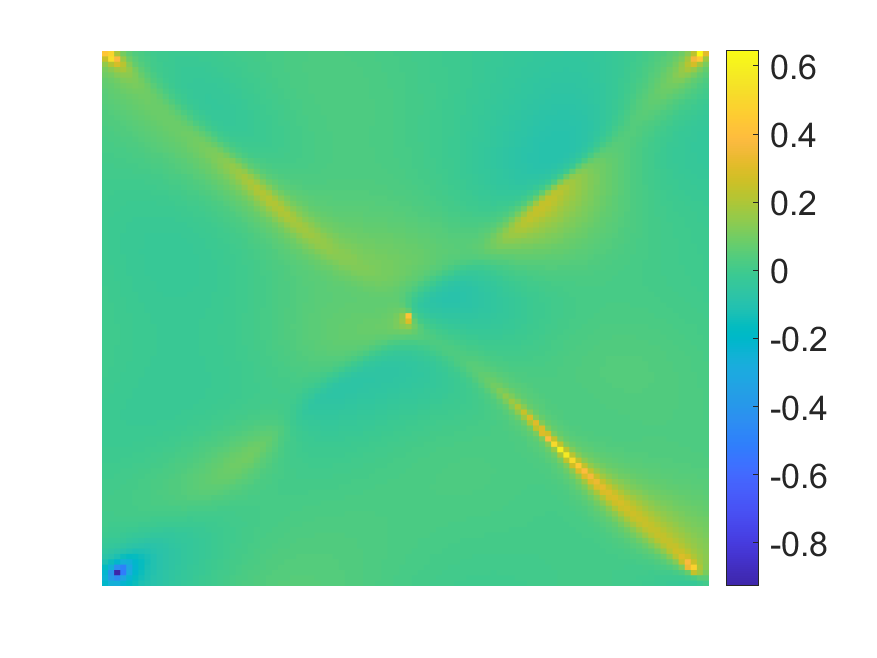}\\
(a) $u_{x_1}$ & (b) $u_{x_2}$ & (c) $u_{x_3}$
\end{tabular}
\caption{\label{fig:exam2.3} The numerical solutions of $\nabla u$ for Example~\ref{exam2} with $p=200$ (slices at $x_3=0$).}
\end{figure}

Next we extend the DVNN to the $p(x)$-Laplace equation. Since the $L^{p(\cdot)}(\Omega)$ norm is inconvenient to evaluate, we compute the relative error $e$ defined by $e:=\|\boldsymbol{\sigma}-\widehat {\boldsymbol{\sigma}}\|_{L^{q(\cdot)}(\Omega)}^{q(\cdot)}/\|\boldsymbol{\sigma}\|_{L^{q(\cdot)}(\Omega)}^{q(\cdot)}$, the $L^2(\Omega)$ (denoted by $e_2$) and $L^1(\Omega)$ (denoted by $e_1$) relative errors instead.
\begin{example}\label{exam3}
$\Omega=(-1,1)^3$, the solution $u(x)=\tfrac{x_1+x_2+x_3}{\sqrt{3}}$ and $f(x)=0$, with 
\begin{equation*}
p(x)=\left\{
\begin{aligned}
1.2,\quad x_1\leq0,\\
100,\quad x_1>0.
\end{aligned}\right.
\end{equation*}
\end{example}

The variable exponent $p(x)$ is piecewise constant, with extreme values in two subregions.
Since $f(x)=0$, the irrotational component $\phi$ in the first step is $0$.
So it suffices to train only the  loss
\begin{equation*}
L_{\rm s}(\boldsymbol{\psi})=\tfrac{1}{q}\|\nabla\times\boldsymbol{\psi}\|_{\bold{L}^q(\Omega)}^q+\langle \nabla\times\boldsymbol{\psi}\cdot \bold{n}, g\rangle.
\end{equation*}
We employ a 3-20-20-20-3 network to approximate $\boldsymbol{\psi}$.
The piecewise-constant $p(x)$ induces a sharp interface at the plane $x_1=0$.
The flux approximation $\bold{\sigma}_{\widehat\theta,\widehat\eta}$ by the DVNN is accurate, with relative errors $e=5.13\times10^{-3}$, $e_2=1.10\times10^{-2}$ and $e_1=8.36\times10^{-3}$; see Fig.~\ref{fig:exam3} for an illustration.
The high accuracy is attributed to the convexity of the loss $L_{\rm s}$ and the gradient-curl representation, which enforces the divergence-free condition
$\nabla\cdot\bold{\tau}=0$ exactly in $\Omega$, including across the interface.

\begin{figure}[hbt!]
\centering\setlength{\tabcolsep}{2pt}
\begin{tabular}{ccc}
\includegraphics[width=.32\textwidth]{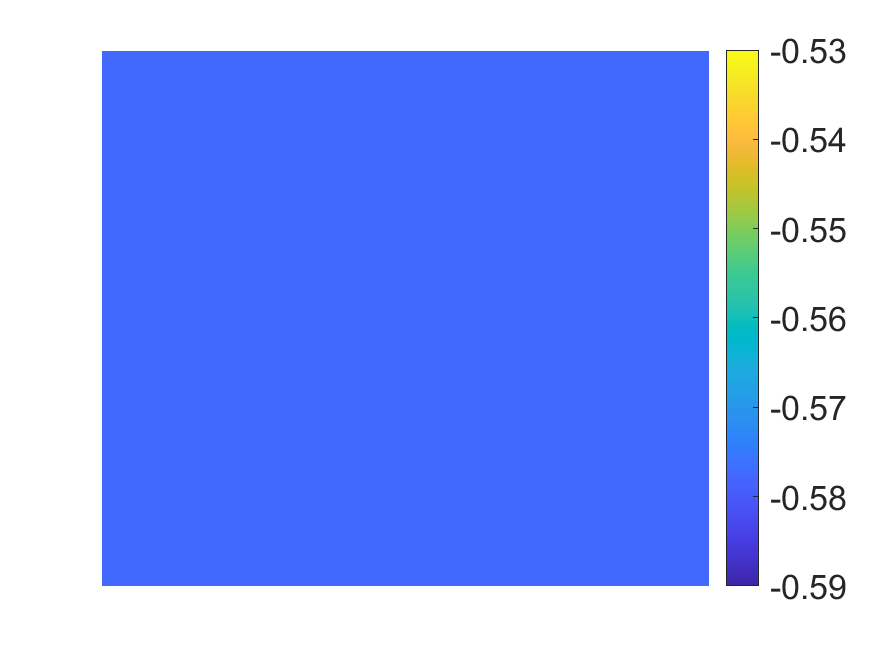} & \includegraphics[width=.32\textwidth]{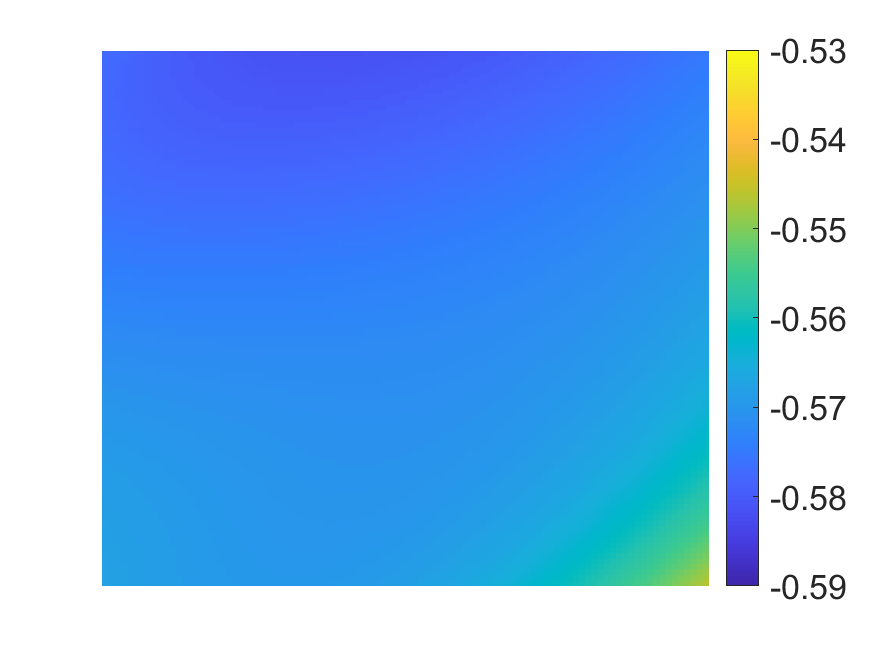} & \includegraphics[width=.32\textwidth]{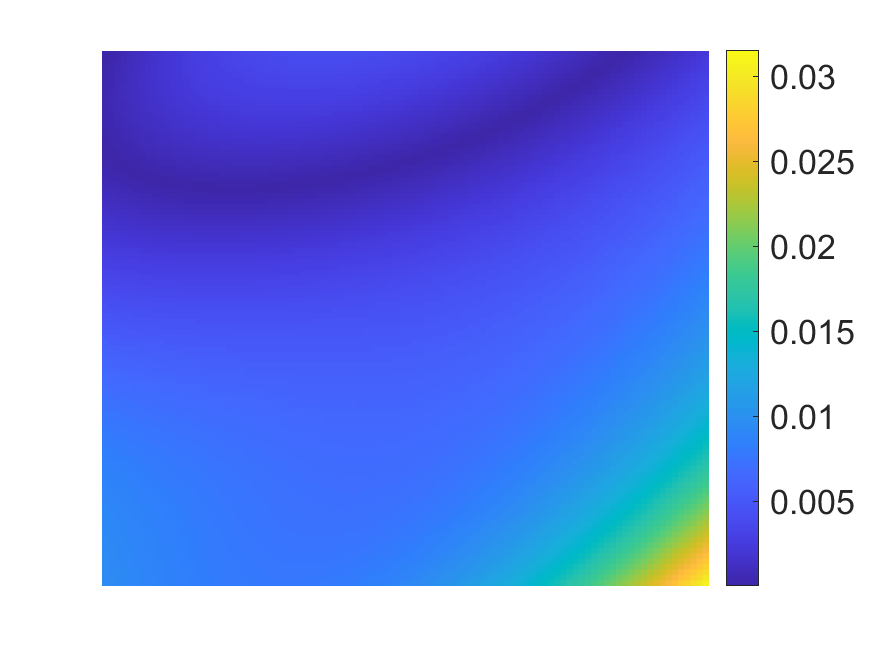}\\
(a) $\sigma_1^*$ & (b) $\sigma_{\widehat{\theta},\widehat{\eta},1}$  & (c) $|\sigma_1^*-\sigma_{\widehat{\theta},\widehat{\eta},1}|$
\end{tabular}
\caption{\label{fig:exam3} The DNN approximations of $\sigma_1$ for Example~\ref{exam3},  slices at $x_3=0$.}
\end{figure}

The last example is adapted from~\cite[Example 5.3]{aragon2023effective} and extended from 2d to 3d.
\begin{example}\label{exam4}
$\Omega=(-1,1)^3$, $p(x)=1+(\frac{x_1+x_2+x_3}{3}+3)^{-1}$, $u(x)=\sqrt{3}e^3(e^{\frac{x_1+x_2+x_3}{3}+3}-1)$ and $f(x)=0$.
\end{example}

Like in Example~\ref{exam3}, it suffices to solve the optimization problem at the second step.
We use a 3-20-20-20-3 network to approximate $\boldsymbol{\psi}$.
The approximation $\sigma_{\widehat\theta,\widehat\eta,1}$ of  $\sigma_1^*$ is shown in Fig.~\ref{fig:exam4} with a relative error $e=3.13\times10^{-8}$, $e_2=1.14\times10^{-2}$ and $e_1=9.43\times10^{-3}$. This indicates that the DVNN can deal with a smooth yet rapidly varying exponent $p(x)$ successfully.

\begin{figure}[hbt!]
\centering\setlength{\tabcolsep}{2pt}
\begin{tabular}{ccc}
\includegraphics[width=.32\textwidth]{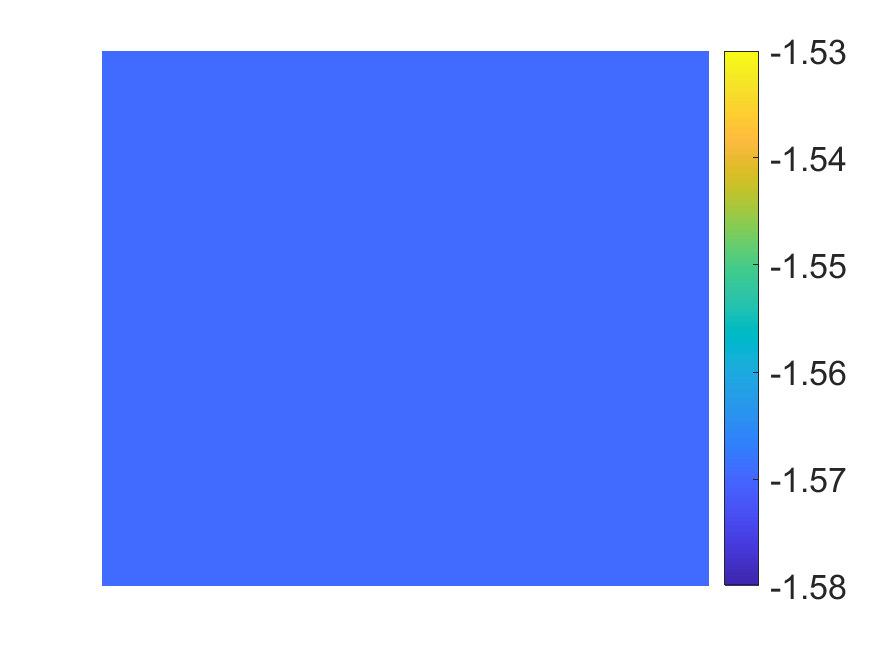} & \includegraphics[width=.32\textwidth]{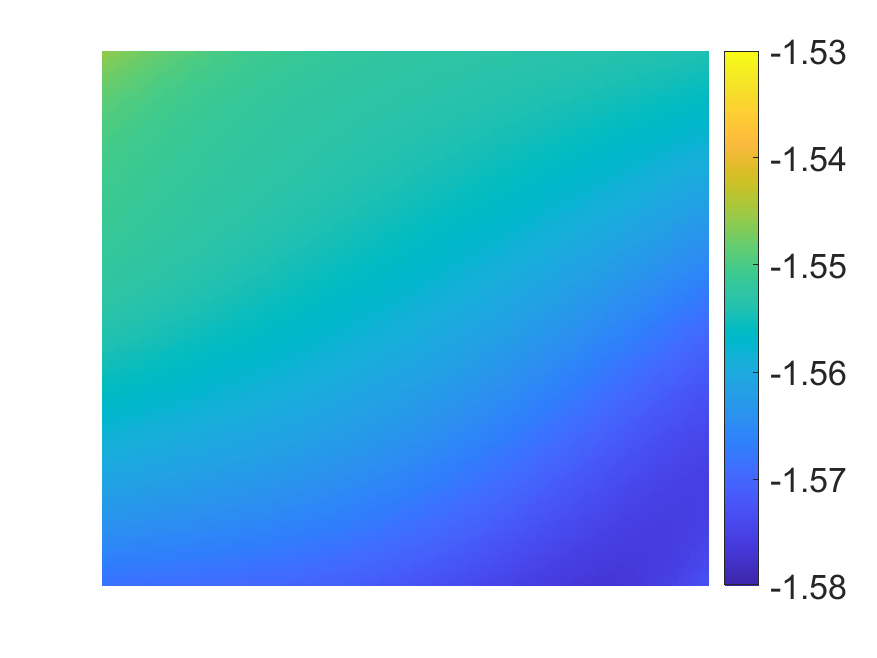} & \includegraphics[width=.32\textwidth]{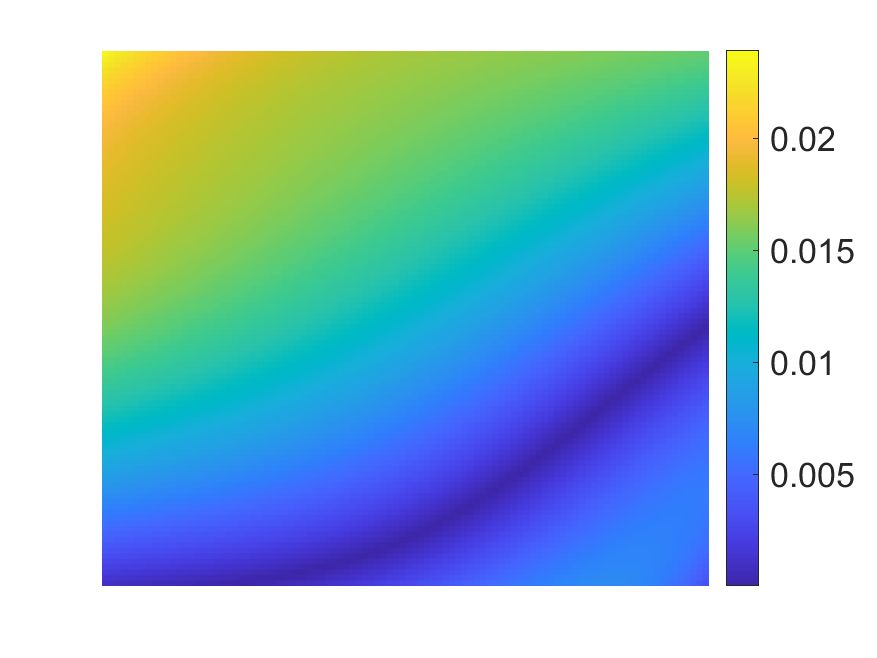}\\
(a) $\sigma_1^*$ & (b) $\sigma_{\widehat{\theta},\widehat{\eta},1}$  & (c) $|\sigma_1^*-\sigma_{\widehat{\theta},\widehat{\eta},1}|$
\end{tabular}
\caption{\label{fig:exam4} The DNN approximations of $\sigma_1$ for Example~\ref{exam4},  slices at $x_3=0$.}
\end{figure}

In summary, the numerical experiments confirm that the DVNN can achieve high accuracy in the range $1.1\le p\le 500$ and for both constant and spatially varying exponents $p(x)$, whereas
the PINN, DRM, and  PINN-M either stagnate or diverge. The DVNN maintains robust convergence, due to the divergence-free constraint preservation, and the decoupling into a linear Poisson solver and a convex unconstrained minimization.
These features make the DVNN the first neural method that can reliably approximate $p$-Laplace (at extreme values of $p$) and variable-exponent problems.

\section{Conclusions}\label{sec:conclusion}
In this work, we have developed a novel neural solver, i.e., dual variational neural network, for solving $p$-Laplace problems with extreme $p$ values. It is based on a dual formulation for the flux variable and the associated Helmholtz decomposition, which enable the construction of robust and accurate neural network approximations in standard Sobolev spaces.
The numerical experiments validate its robustness and accuracy in a wide range of $p$.
Moreover, the extension to variable-exponent $p(x)$-Laplace problems is direct and effective. 

\appendix

\section{Proof of Theorem \ref{thm:finalthm}}\label{append}
By Theorem \ref{thm:errordecom}, it suffices to bound $\mathcal{E}_{\rm app}$ and $\mathcal{E}_{\rm stat}$. The notation $|\cdot|_{\ell^0}$ and $|\cdot|_{\ell^\infty}$ denote the number of nonzero entries and the maximum norm of a vector, respectively. We recall an approximation property \cite[Proposition 4.8]{GuhringRaslan:2021}.

\begin{lemma}\label{lem:tanh-approx}
Let $s\in\mathbb{N}\cup\{0\}$ and $p\in[1,\infty]$ be fixed, and $v\in W^{k,p}(\Omega)$ with $k\geq  s+1$. Then for any  $\epsilon>0$, there exists at least one $v_\theta$ of depth $\mathcal{O}\big(\log(d+k)\big)$, with $|\theta|_{\ell^0}$ bounded by $\mathcal{O}\big(\epsilon^{-\frac{d}{k-s-\mu s}}\big)$ and  $|\theta|_{\ell^\infty}$ by $\mathcal{O}(\epsilon^{-2-\frac{2(d/p+d+s+\mu s)+d/p+d}{k-s-\mu s}})$, where $\mu>0$ is arbitrarily small, such that  $\|v-v_\theta\|_{W^{s,p}(\Omega)} \leq \epsilon.$
\end{lemma}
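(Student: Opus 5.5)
Since this lemma is \cite[Proposition 4.8]{GuhringRaslan:2021}, I would reproduce the standard localize–approximate–emulate argument rather than prove something new. First extend $v$ from $\Omega\subset(-1,1)^3$ to a function in $W^{k,p}$ on the cube, which is possible for the $C^{1,1}$ domain by Stein's extension operator. Then fix a uniform grid of mesh size $1/N$ and a smooth partition of unity $\{h_m\}$ subordinate to the cells, with $|h_m|_{W^{j,\infty}}\lesssim N^{j}$. On each patch, replace $v$ by its averaged Taylor polynomial $P_m$ of degree $k-1$. The Bramble--Hilbert lemma gives $\|v-P_m\|_{W^{j,p}(\text{patch})}\lesssim N^{-(k-j)}|v|_{W^{k,p}(\text{patch})}$. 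Using the Leibniz rule and summing over patches, $\|v-\sum_m h_mP_m\|_{W^{s,p}}\lesssim N^{-(k-s)}\|v\|_{W^{k,p}}$, where a small additional loss $N^{\mu s}$ will enter later from the smoothed bumps.

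Next I would emulate every ingredient by tanh networks of fixed depth. I would approximate each $h_m$ by a product of one-dimensional bumps. Each bump is built from differences of shifted and scaled tanh functions, $\rho(\kappa(x-a))-\rho(\kappa(x-b))$. The bumps are glued together by an approximate multiplication $xy\approx[\rho(\delta(x+y))^{(2)}\text{-type finite differences}]$, realized as second finite differences of $\rho$ at a point where $\rho''\neq0$ with step $\delta$. The monomials in $P_m$ are produced by the same finite-difference tricks. Multiplying $d+k$ factors through a binary tree of such multiplications gives depth $\mathcal{O}(\log(d+k))$. The number of patches is $N^d$ and each patch uses $\mathcal{O}(1)$ weights, so $|\theta|_{\ell^0}\sim N^d$. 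Choosing $N^{-(k-s-\mu s)}\sim\epsilon$ then gives the claimed $\epsilon^{-d/(k-s-\mu s)}$.

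The main obstacle is controlling the $W^{s,p}$ error of the emulation while tracking the weight magnitudes. The finite-difference multiplication has error $\mathcal{O}(\delta)$ in $W^{s,\infty}$ only after dividing by $\delta^2$-scaled outputs, so its weights grow like $\delta^{-2}$. The tanh bumps need slope $\kappa$ large enough that the transition regions, whose $W^{s}$ seminorm scales like $\kappa^s$, have measure small enough in $L^p$. I would choose $\kappa= N^{1+\mu}$ and $\delta$ polynomial in $\epsilon$, and balance the errors so that each stays below $\epsilon$ after multiplication by the $N^{d/p}$ and $N^{s}$ amplification factors. The exponent bookkeeping should give $|\theta|_{\ell^\infty}=\mathcal{O}(\epsilon^{-2-(2(d/p+d+s+\mu s)+d/p+d)/(k-s-\mu s)})$. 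This balancing of $\kappa$, $\delta$ and $N$ is the delicate step, and I would check it first in one dimension with $s=1$.
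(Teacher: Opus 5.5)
Your sketch is correct in outline and follows the same route as the paper. The paper gives no independent argument: it simply invokes \cite[Proposition 4.8]{GuhringRaslan:2021}, whose proof is exactly this scheme of an approximate partition of unity, averaged Taylor polynomials with Bramble--Hilbert, finite-difference multiplication and a binary product tree (your Stein-extension step from $\Omega$ to the cube is a detail the paper leaves implicit).

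One point needs stating more carefully. The tanh product bumps with slope $\kappa=N^{1+\mu}$ should be used directly as the approximate partition of unity in the Leibniz/Bramble--Hilbert estimate, not as $W^{s,p}$-approximations of a compactly supported $h_m$, which they are not. The extra factor $N^{\mu}$ is there to make their non-compact tails super-polynomially small, not to shrink transition regions. The dominant term $\kappa^{s}N^{-k}=N^{-(k-s-\mu s)}$ then gives the stated rate.
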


Under Assumption \ref{assump:regularity}(i), for $\epsilon>0$, with $d=3$, $k=3$, $s=2$ and $p=q$ in Lemma~\ref{lem:tanh-approx},  there exists 
$
\phi_\theta \in \mathcal{N}_\phi(c, c\epsilon^{-\frac{3}{1-\mu}},c\epsilon^{-\frac{15+9/q}{1-\mu}})$,
such that $ \|\phi^*-\phi_\theta  \|_{W^{2,q}(\Omega)} \leq \epsilon$.
Applying Lemma \ref{lem:tanh-approx} componentwise admits the existence of
$\boldsymbol{\psi}_\eta \in \mathcal{N}_{\boldsymbol{\psi}}(c, c\epsilon^{-\frac{3}{1-\mu}},c\epsilon^{-\frac{13+9/q}{1-\mu}})$
such that $ \|\boldsymbol{\psi}^*-\boldsymbol{\psi}_\eta  \|_{\bold{W}^{1,q}(\Omega)} \leq \epsilon$. 

The analysis of $\mathcal{E}_{\text{stat}}$ requires two concepts: Rademacher complexity \cite{AnthonyBartlett:1999,Bartlett2003RademacherAG} and the covering number.
Now we decompose the statistical error $\mathcal{E}_{\rm stat}$ into four terms:
{\small\begin{align*}
\mathcal{E}_{\rm stat}& \leq|\Omega|  \sup _{\phi_{\theta} \in \mathcal{N}_\phi}\left|N_d^{-1}\sum_{j=1}^{N_d}h_{d,\rm p}(X_j)-\mathbb{E}[h_{d,\rm p}(X)]\right|^{s_d}+|\partial\Omega|\sup _{\phi_{\theta} \in \mathcal{N}_\phi}\left|N_b^{-1}\sum_{j=1}^{N_b} h_{b,\rm p}(Y_j)-\mathbb{E}[h_{b,\rm p}(Y)]\right|^{s_d}\\
&+|\Omega|\sup_{\psi_{\eta} \in \mathcal{N}_{\boldsymbol{\psi}}}\left|N_d^{-1}\sum_{j=1}^{N_d} h_{d,\rm s}(X_j)-\mathbb{E}[h_{d,\rm s}(X)]\right|^{s_b}+|\partial\Omega|\sup_{\psi_{\eta} \in \mathcal{N}_{\boldsymbol{\psi}}}\left|N_b^{-1}\sum_{j=1}^{N_b}h_{b,\rm s}(Y_j)-\mathbb{E}[h_{b,\rm s}(Y)]\right|^{s_b}.
\end{align*}}
with the exponents $s_d=\min\{\frac{q}{4},\frac{2}{q^2}\}$, $s_b=\min\{\frac{1}{2},\frac{1}{q}\}$ and 
\begin{align*}
h_{d,\rm p}(X;\phi_\theta)&=|\Delta \phi_\theta(X)-f(X)|^q,&&
h_{b,\rm p}(Y;\phi_\theta)=|\phi_\theta(Y)|^q+|\nabla\phi_\theta(Y)|^q,\\
h_{d,\rm s}(X;\boldsymbol{\psi}_\eta)&=\tfrac{1}{q}|\nabla \phi_{\widehat{\theta^*}}(X)+\nabla\times\boldsymbol{\psi}_\eta(X)|^q,&&
h_{b,\rm s}(Y;\boldsymbol{\psi}_\eta)=g\nabla\times\boldsymbol{\psi}_{\eta}(Y)\cdot \bold{n}.
\end{align*}
This decomposition motivates four neural network (NN) function classes
$\mathcal{H}_{d,\rm p} = \{h_{d,\rm p}(X;\phi_\theta): \phi_\theta \in  \mathcal{N}_\phi\}$, $\mathcal{H}_{b,\rm p} = \{h_{b,\rm p}(Y;\phi_\theta): \phi_\theta \in  \mathcal{N}_\phi\}$, $\mathcal{H}_{d,\rm s} = \{h_{d,\rm s}(X;\boldsymbol{\psi}_\eta): \boldsymbol{\psi}_\eta \in  \mathcal{N}_{\boldsymbol{\psi}}\}$ and $\mathcal{H}_{b,\rm s} = \{h_{b,\rm s}(Y;\boldsymbol{\psi}_\eta): \boldsymbol{\psi}_\eta \in  \mathcal{N}_{\boldsymbol{\psi}}\}$.
Then we bound their Rademacher complexities using Dudley's formula (\cite[Theorem 9]{lu2021priori},~\cite[Theorem 1.19]{wolf2018mathematical}). It remains to estimate the covering number of each associated NN function class. This relies on quantitative Lipschitz constants of functions in the NN function class in terms of $\theta$; see~\cite[Lemma 3.4 and Remark 3.3]{JinLiLu:2022} and~\cite[Lemma 5.3]{Jin:DNN-Control}. 

\begin{lemma}\label{lem:NN-Lip}
Let $L$, $W$ and $B$ be the depth, width and maximum weight bound of an NN function class $\mathcal{N}_\phi$, with $N_\theta$ nonzero weights. Then for any $\phi_\theta\in\mathcal{N}_\phi$, the following estimates hold:
\begin{enumerate}
\item[{\rm(i)}] $\|\phi_\theta\|_{L^\infty(\Omega)}\leq WB$,   $\|\phi_\theta-\phi_{\tilde{\theta}}\|_{L^\infty(\Omega)}\leq 2LW^LB^{L-1}|\theta-\tilde\theta|_{\ell^\infty}$;
\item[{\rm(ii)}] $\|\nabla \phi_\theta\|_{\bold{L}^\infty(\Omega)}\leq \sqrt{d}W^{L-1}B^L$,
$\|\nabla (\phi_\theta-\phi_{\tilde{\theta}})\|_{\bold{L}^\infty(\Omega)}\leq  \sqrt{d}L^2W^{2L-2}B^{2L-2}|\theta-\tilde\theta|_{\ell^\infty}$;
\item[{\rm(iii)}] $\|\Delta \phi_\theta\|_{L^\infty(\Omega)}\leq dLW^{2L-2} B^{2L} $, 
$\|\Delta (\phi_\theta-\phi_{\tilde{\theta}})\|_{L^\infty(\Omega)}\leq  4dN_\theta L^2W^{3L-3}B^{3L-3}|\theta-\tilde\theta|_{\ell^\infty}$.
\end{enumerate}
\end{lemma}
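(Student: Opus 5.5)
The plan is a layer-by-layer induction on the fully connected tanh network. I write $\phi_\theta = A_L\circ\rho\circ A_{L-1}\circ\cdots\circ\rho\circ A_1$ with affine maps $A_\ell(y)=W_\ell y+b_\ell$, and set $y^{(0)}=x$, $y^{(\ell)}=\rho(A_\ell y^{(\ell-1)})$ for $\ell<L$. Three facts drive everything: $|\rho|\le 1$, $|\rho'|\le 1$ and $|\rho''|\le c$; every entry of $W_\ell$ and $b_\ell$ is bounded by $B$; and each row has at most $W$ entries. Since $x\in\Omega\subset(-1,1)^3$, the hidden units take values in $[-1,1]$.

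For (i), $|\phi_\theta|\le WB\cdot 1+B$, which gives $WB$ up to the convention absorbing the bias into the count. For the Lipschitz bound in $\theta$, let $\delta_\ell=|y^{(\ell)}_\theta-y^{(\ell)}_{\tilde\theta}|_{\ell^\infty}$. Since $\rho$ is $1$-Lipschitz,
\begin{equation*}
\delta_\ell\le |W_\ell y_\theta^{(\ell-1)}-\widetilde W_\ell y_{\tilde\theta}^{(\ell-1)}|+|b_\ell-\tilde b_\ell|\le WB\,\delta_{\ell-1}+(W+1)|\theta-\tilde\theta|_{\ell^\infty}.
\end{equation*}
Unrolling this recursion gives $\delta_L\le \sum_{\ell}(WB)^{L-\ell}(W+1)|\theta-\tilde\theta|_{\ell^\infty}\le 2LW^LB^{L-1}|\theta-\tilde\theta|_{\ell^\infty}$, assuming $B\ge1$ and $W\ge1$.

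For (ii), the chain rule gives $\nabla\phi_\theta = W_L D_{L-1}W_{L-1}\cdots D_1W_1$ with diagonal $D_\ell=\mathrm{diag}(\rho'(\cdot))$ of norm at most $1$. Each partial derivative is therefore bounded by a product of $L$ weight matrices in the $\ell^\infty$-operator norm, which is at most $W^{L-1}B^L$ (the first factor contributes $B$ only, per input coordinate). Taking the Euclidean norm over $d$ coordinates gives the factor $\sqrt d$. For the difference, I telescope the product: each term has one factor replaced by its difference. A weight difference contributes $W|\theta-\tilde\theta|$. A difference of $D_\ell$ contributes $\|\rho''\|_\infty\,\delta_{\ell-1}$-type terms, which are bounded by (i) applied to the truncated network. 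Summing the $O(L)$ terms, each of size at most $L W^{2L-2}B^{2L-2}|\theta-\tilde\theta|$, gives the stated bound.

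For (iii), differentiating the Jacobian product once more gives $\partial_{ii}\phi_\theta$ as a sum over $\ell$ of terms containing one second-derivative factor $\mathrm{diag}(\rho''(\cdot))\,(W_\ell\cdots W_1 e_i)^{2}$, sandwiched between products of first-derivative factors. Each term is bounded by $W^{2L-2}B^{2L}$. Summing over $L$ layers and $d$ coordinates yields $dLW^{2L-2}B^{2L}$. The difference estimate is handled by telescoping again, now over three types of factors: $\rho'$, $\rho''$ (using that $\rho'''$ is bounded) and the weights. Counting the perturbed positions over all nonzero weights produces the factor $N_\theta$, and the resulting exponent is $3L-3$.

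The main obstacle is the bookkeeping in (iii): controlling the perturbation of the inner quantities $W_\ell y^{(\ell-1)}$ inside $\rho''$ while tracking the exponents exactly. I will not rederive this in full; instead I will follow \cite[Lemma 3.4]{JinLiLu:2022} and \cite[Lemma 5.3]{Jin:DNN-Control}, checking only that the tanh bounds on $\rho^{(k)}$ for $k\le 3$ suffice.
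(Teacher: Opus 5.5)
Your proposal is correct and follows essentially the same route as the paper, which gives no independent proof but cites \cite[Lemma 3.4 and Remark 3.3]{JinLiLu:2022} and \cite[Lemma 5.3]{Jin:DNN-Control}; those references obtain the bounds by exactly this layer-by-layer recursion and telescoping of the Jacobian and Hessian products, and you defer the bookkeeping for (iii) to the same sources. The only caveats concern constants inherited from the cited conventions: the direct computation in (i) gives $(W+1)B$ rather than $WB$ unless the bias is absorbed into the width, as you noted, and obtaining exactly $L^2$ in (ii) uses the tanh-specific bound $\|\rho''\|_\infty=4/(3\sqrt{3})<1$.
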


Note that similar estimates to those in Lemma \ref{lem:NN-Lip} also hold when $L^\infty(\Omega)$ is replaced with $L^\infty(\partial\Omega)$, and also for each component of any function in $\mathcal{N}_{\boldsymbol{\psi}}$.
Now we can bound $\mathcal{E}_{\rm stat}$.
\begin{theorem}\label{thm:err-stat}
Under Assumption~\ref{assump:regularity}(ii), for small $\tau $, with probability at least $1- 4 \tau$,
$$\mathcal{E}_{\rm stat}\leq c(e_d + e_b),$$
with  $c=c(q,\|f\|_{L^\infty(\Omega)},\|g\|_{L^\infty(\partial\Omega)})$, and the errors $e_d$ and $e_b$  defined by
\begin{align*}
e_d & \le c\left\{\begin{aligned}
    &{ B^{\frac{q^2}{2}L} N^{\frac{q^2}{2}(L-1)} \big(P(B,N,N_d,\tau)\big)^{\frac{1}{2}}}{N_d^{-\frac{q}{8}}},\quad&&1<q<2,\\
    &{ B^{\max\{\frac{4}{q},1\}L} N^{\max\{\frac{4}{q},1\}(L-1)} \big(P(B,N,N_d,\tau)\big)^{\frac{1}{q}}}{N_d^{-\frac{1}{q^2}}},\quad &&q\geq2,
\end{aligned}\right. \\
e_b & \le c\left\{\begin{aligned}
    &{ B^{\max\{\frac{q^2}{4},\frac{1}{2}\}L} N^{\max\{\frac{q^2}{4},\frac{1}{2}\}(L-1)} \big(P(B,N,N_b,\tau)\big)^{\frac{1}{2}}}{N_b^{-\frac{q}{8}}},\quad&&1<q<2,\\
    &{ B^{\frac{2}{q}L} N^{\frac{2}{q}(L-1)} (P(B,N,N_b,\tau))^{\frac{1}{q}}}{N_b^{-\frac{1}{q^2}}},\quad &&q\geq2.
\end{aligned}\right.
\end{align*}
with $P(B,N,N_d,\tau)=N_\theta^\frac12\big( \log^\frac12 B + \log^\frac12 N + \log^\frac12 N_d)  + \log^\frac12 \frac{1}{\tau}$ and $N=\max\{N_\theta,N_\eta\}$.
\end{theorem}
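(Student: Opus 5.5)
The plan is the standard statistical-learning route: symmetrization, then Dudley's entropy integral, then covering numbers obtained from the parameter Lipschitz bounds in Lemma~\ref{lem:NN-Lip}, and finally McDiarmid's inequality for the high-probability statement. The four suprema in the decomposition of $\mathcal{E}_{\rm stat}$ are treated separately, once for each class $\mathcal{H}_{d,\rm p},\mathcal{H}_{b,\rm p},\mathcal{H}_{d,\rm s},\mathcal{H}_{b,\rm s}$. For a generic class $\mathcal{H}$ with $\sup_{h\in\mathcal{H}}\|h\|_{L^\infty}\le M_{\mathcal{H}}$, I would first show that, with probability at least $1-\tau$,
\begin{equation*}
\sup_{h\in\mathcal{H}}\Big|\frac1{N}\sum_{j=1}^N h(Z_j)-\mathbb{E}h(Z)\Big|\le 2\mathfrak{R}_N(\mathcal{H})+M_{\mathcal{H}}\sqrt{\frac{2\log(1/\tau)}{N}}.
\end{equation*}
This follows from symmetrization combined with McDiarmid's bounded-difference inequality. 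A union bound over the four classes then gives the probability $1-4\tau$.

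The first concrete step is to compute the uniform bounds $M_{\mathcal{H}}$ and the Lipschitz constants in $\theta$ (or $\eta$).
\begin{itemize}
\item For $h_{d,\rm p}$, Lemma~\ref{lem:NN-Lip}(iii) and $f\in L^\infty$ give $|\Delta\phi_\theta-f|\le c\,W^{2L-2}B^{2L}$. The mean value inequality $\big||a|^q-|b|^q\big|\le q\max(|a|,|b|)^{q-1}|a-b|$ then gives Lipschitz constant $c\,(W^{2L}B^{2L})^{q-1}N_\theta L^2W^{3L}B^{3L}$.
\item For $h_{b,\rm p}$, the same argument uses parts (i)--(ii) of Lemma~\ref{lem:NN-Lip} restricted to $\partial\Omega$.
\item For $h_{d,\rm s}$, the curl is a linear combination of first derivatives of the components, so part (ii) applies componentwise. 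The term $\nabla\phi_{\widehat\theta}$ is a fixed bounded function, controlled by part (ii) as well.
\item For $h_{b,\rm s}$, the class is linear in $\nabla\times\boldsymbol{\psi}_\eta$ and is multiplied by $g\in L^\infty(\partial\Omega)$.
\end{itemize}
With these constants, a parameter $\epsilon$-net in $[-B,B]^{N_\theta}$ under $|\cdot|_{\ell^\infty}$ gives the covering estimate $\log\mathcal{C}(\mathcal{H},\delta,L^\infty)\le N_\theta\log(2B\Lambda_{\mathcal{H}}/\delta)$, where $\Lambda_{\mathcal{H}}$ denotes the Lipschitz constant. Inserting this into Dudley's integral, with the lower cutoff at $\delta\sim N^{-1/2}$, yields
\begin{equation*}
\mathfrak{R}_N(\mathcal{H})\le c\,M_{\mathcal{H}}N^{-1/2}N_\theta^{1/2}\big(\log^{1/2}B+\log^{1/2}N_\theta+\log^{1/2}N\big).
\end{equation*}
Here the $\log L,\log W$ factors are absorbed into the $N_\theta$ term because $W\le N_\theta$. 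Together with the McDiarmid term, this produces exactly the combination $M_{\mathcal{H}}N^{-1/2}P(B,N,N,\tau)$.

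The last step raises each bound to the relevant exponent. For the PDE part, note that $L_{\rm p}$ is a sum of $q$-th roots. I would use $|a^{1/q}-b^{1/q}|\le|a-b|^{1/q}$ for $q\ge1$ to pass from the $h$-level deviation to the loss deviation. Then the exponents $s_d$ and $s_b$ from Theorem~\ref{thm:errordecom} are applied, and $\max(M,\cdot)^{\text{power}}$ is collected into the $B^{\cdot L}N^{\cdot(L-1)}$ prefactors, using $W\le N$. This bookkeeping, especially matching the stated exponents $\frac{q^2}{2}$, $\max\{\frac4q,1\}$ and $N_d^{-q/8}$, $N_d^{-1/q^2}$, is where I expect the main difficulty. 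The powers compound from three sources: the $q$-th power in $h$, the $1/q$-root in $L_{\rm p}$, and the outer exponent $s_d$. They must be tracked separately for $1<q<2$ and $q\ge2$, and some crude bounds such as $M^{q}\le (WB)^{2Lq}$ may be needed to reach the stated forms. If an exact match fails, I would settle for an upper bound of the same structure, with exponents dominating the stated ones, since the constant $c$ is allowed to depend on $q$.
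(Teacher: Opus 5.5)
Your overall route is the paper's: a covering of the parameter box (Cucker--Smale), parameter-Lipschitz constants from Lemma~\ref{lem:NN-Lip}, Dudley's integral, the PAC bound (which is exactly your symmetrization plus McDiarmid), and a union bound over the four classes. The gap is at the step you flagged, and it is not a matter of bookkeeping.

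You correctly note that $L_{\rm p}$ is built from $q$-th roots. The honest estimate is therefore $\sup_{\phi_\theta}|L_{\rm p}(\phi_\theta)-\widehat L_{\rm p}(\phi_\theta)|\le c\,\delta_d^{1/q}+c\,\delta_b^{1/q}$, where $\delta_d\lesssim B^{2qL}N^{2q(L-1)}P\,N_d^{-1/2}$ and $\delta_b\lesssim B^{qL}N^{q(L-1)}P\,N_b^{-1/2}$ are the deviations of the sample averages of $h_{d,\rm p}$ and $h_{b,\rm p}$. After raising to $s_d$, the total power is $s_d/q$. This yields $N_d^{-1/8}$, $N_b^{-1/8}$ for $1<q<2$ and $N_d^{-1/q^3}$, $N_b^{-1/q^3}$ for $q\ge 2$. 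The statement claims $N_d^{-q/8}$, $N_b^{-q/8}$ and $N_d^{-1/q^2}$, $N_b^{-1/q^2}$.

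Crude bounds such as $M^q\le (WB)^{2Lq}$ only move the $B$- and $N$-prefactors, so they cannot repair this. A bound with slower decay in $N_d,N_b$ does not imply the stated one. Your fallback therefore proves a weaker theorem, not this statement.

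The paper obtains the stated exponents only because its decomposition of $\mathcal{E}_{\rm stat}$ raises the $h$-level deviations directly to the power $s_d$. For example, $(B^{2qL}N^{2q(L-1)}P\,N_d^{-1/2})^{q/4}$ reproduces $B^{\frac{q^2}{2}L}N^{\frac{q^2}{2}(L-1)}N_d^{-q/8}$, and the power $2/q^2$ gives $B^{\frac4qL}N^{\frac4q(L-1)}N_d^{-1/q^2}$. In effect the paper tacitly treats $t\mapsto t^{1/q}$ as Lipschitz near $0$. That is unjustified, because the residual terms in $L_{\rm p}$ can be arbitrarily small over $\mathcal{N}_\phi$.

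Reaching the statement honestly needs a genuinely new ingredient. One option is a Bernstein-type relative-deviation bound exploiting $\mathrm{Var}\,h\le M_{\mathcal H}\,\mathbb{E}h$. Up to logarithms, this restores the $N^{-1/2}$ rate for the roots when $q\le 2$. When $q>2$ it only gives $N^{-1/q}$, which still falls short of $N_d^{-1/q^2}$ after the power $2/q^2$.

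There is a second, smaller gap: $\nabla\phi_{\widehat\theta}$ is not a fixed function. It is computed from the same samples $\mathbb{X},\mathbb{Y}$ that enter $\widehat L_{\rm s}$, so treating it as fixed breaks the i.i.d.\ structure that symmetrization and McDiarmid require. The paper has the same blind spot when it writes $\phi_{\widehat{\theta^*}}$ in $h_{d,\rm s}$.

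The clean fix is to take the supremum jointly over $(\theta,\eta)$, i.e.\ to use the classes $\{\tfrac1q|\nabla\phi_\theta+\nabla\times\boldsymbol\psi_\eta|^q\}$ and $\{g(\nabla\phi_\theta+\nabla\times\boldsymbol\psi_\eta)\cdot\boldsymbol n\}$. The second class also accounts for the fluctuation of $g\nabla\phi_{\widehat\theta}\cdot\boldsymbol n$, which you drop, as does the paper's $h_{b,\rm s}$. Their log-covering numbers are $\lesssim (N_\theta+N_\eta)\log(\cdots)$, so this only replaces $N_\theta^{1/2}$ by $(2N)^{1/2}$ in $P$ and leaves the rates unchanged. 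Alternatively, use fresh samples in the second stage.
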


\begin{proof}
Fix $m \in \mathbb{N }$, $B \in [1, \infty)$, $\epsilon \in  (0,1)$,
and $\mathbb{B}_{B} := \{x\in\mathbb{R}^m:\ |x|_{\ell^\infty}\leq B\}$. By~\cite[Proposition 5]{CuckerSmale:2002}, there holds
$ \log \mathcal{C}(\mathbb{B}_{B},|\cdot|_{\ell^\infty},\epsilon)\leq m\log (4B\epsilon^{-1})$. For  $\mathcal{H}_{d,\rm p}$, by Lemma \ref{lem:NN-Lip}, we have
$$\begin{aligned}
\||\Delta \phi_\theta-f|^q-|\Delta \phi_{\tilde{\theta}}-f|^q\|_{L^\infty(\Omega)}\leq& q\|\Delta \phi_\theta-\Delta\phi_{\tilde{\theta}}\|_{L^\infty(\Omega)}(\|\Delta \phi_\theta-f\|_{L^\infty(\Omega)}^{q-1}+\|\Delta \phi_{\tilde{\theta}}-f\|_{L^\infty(\Omega)}^{q-1})\\
\leq&24qN_\theta L^2W^{3L-3}B^{3L-3}(3LW^{2L-2} B^{2L}+\|f\|_{L^\infty(\Omega)})^{q-1}|\theta-\tilde\theta|_{\ell^\infty}\\
\leq& c(\|f\|_{L^\infty(\Omega)})qN_\theta L^{q+1}W^{(L-1)(3q+1)}B^{2Lq+L-3}|\theta-\tilde\theta|_{\ell^\infty},
\end{aligned}$$
and consequently, the covering number $\mathcal{C}(\mathcal{H}_{d,\rm p},\|\cdot\|_{L^{\infty}(\Omega)},\epsilon)$ satisfies 
\begin{align*}
\log \mathcal{C}(\mathcal{H}_{d,\rm p},\|\cdot\|_{L^{\infty}(\Omega)},\epsilon)&\leq
\log \mathcal{C}(\Theta ,|\cdot|_{\ell^\infty},\Lambda_{d,\rm p}^{-1}\epsilon)  \leq cN_\theta \log(4B \Lambda_{d,\rm p}\epsilon^{-1}).
\end{align*}
with $\Lambda_{d,\rm p}=c(\|f\|_{L^\infty(\Omega)},q)N_\theta L^{q+1}W^{(L-1)(3q+1)}B^{2Lq+L-3}$. By Lemma~\ref{lem:NN-Lip}, we have $M_{\mathcal{H}_{d,\rm p}} = c(\|f\|_{L^\infty(\Omega)})L^qW^{2q(L-1)}B^{2Lq}.$
Then by Dudley's lemma (\cite[Theorem 9]{lu2021priori},~\cite[Theorem 1.19]{wolf2018mathematical}), since $1\leq B$, $2\leq L$ and $1\leq W \leq N_\theta$, $1\leq L\leq c\log5$, cf. Lemma~\ref{lem:tanh-approx}, we can bound the Rademacher complexity $\mathfrak{R}_{N_d}(\mathcal{H}_{d,\rm p}) $ by
\begin{align*}
\mathfrak{R}_{N_d}(\mathcal{H}_{d,\rm p})
\leq&4N_d^{-\frac12} +12N_d^{-\frac12}\int^{M_{\mathcal{H}_{d,\rm p}}}_{N_d^{-\frac12}}{\big(cN_\theta \log(4B\Lambda_{d,\rm p}\epsilon^{-1})\big)}^{\frac12}\ {\rm d}\epsilon\\
\leq& 4N_d^{-\frac12}+12N_d^{-\frac12}M_{\mathcal{H}_{d,\rm p}}\big(cN_\theta \log(4B\Lambda_{d,\rm p} N_d^{\frac12})\big)^\frac12 \\
\leq& c(q,\|f\|_{_{L^\infty(\Omega)}}) N_d^{-\frac12} B^{2qL} N_\theta^{2qL-2q+\frac{1}{2}} \big( \log^\frac12 B + \log^\frac12 N_\theta + \log^\frac12 N_d \big).
\end{align*}
Repeating the preceding argument yields
\begin{align*}
\mathfrak{R}_{N_b}(\mathcal{H}_{b,\rm p})&\leq c(q,\|g\|_{_{L^\infty(\partial\Omega)}})N_b^{-\frac{1}{2}}B^{qL} N_\theta^{qL-q+\frac{1}{2}}(\log^{\frac{1}{2}} B+\log^{\frac{1}{2}} N_\theta+\log^{\frac{1}{2}} N_b),\\
\mathfrak{R}_{N_d}(\mathcal{H}_{d,\rm s})&\leq c(q,\|f\|_{_{L^\infty(\Omega)}})N_d^{-\frac{1}{2}}B^{qL} N_\eta^{qL-q+\frac{1}{2}}(\log^{\frac{1}{2}} B+\log^{\frac{1}{2}} N_\eta+\log^{\frac{1}{2}} N_d),\\
\mathfrak{R}_{N_b}(\mathcal{H}_{b,\rm s})&\leq c(q,\|g\|_{_{L^\infty(\partial\Omega)}})N_b^{-\frac{1}{2}}B^{L} N_\eta^{L-\frac{1}{2}}(\log^{\frac{1}{2}} B+\log^{\frac{1}{2}} N_\eta+\log^{\frac{1}{2}} N_b).
\end{align*}
Finally, the desired result follows from the PAC bound \cite[Theorem 3.1]{Mohri:2018}.
\end{proof}

\begin{proof}[Proof of Theorem \ref{thm:finalthm}]
By Lemma~\ref{lem:tanh-approx}, there exist two DNNs $
\phi_\theta \in \mathcal{N}_{\phi}(c, c\epsilon^{-\frac{3}{1-\mu}},c\epsilon^{-\frac{15+9/q}{1-\mu}})$ and $\boldsymbol{\psi}_\eta \in \mathcal{N}_{\boldsymbol{\psi}}(c, c\epsilon^{-\frac{3}{1-\mu}},c\epsilon^{-\frac{13+9/q}{1-\mu}})$, such that 
$\mathcal{E}_{\rm app}\leq c\epsilon^{\frac{q}{4}}$ for $1<q<2$ and $\mathcal{E}_{\rm app}\leq c\epsilon^{\frac{2}{q^2}}$ for $ q\geq2$. 
Then in Theorem \ref{thm:err-stat}, we select the numbers $N_d$ and $N_b$ of sampling points as given by \eqref{eqn:ndnb}. The proof is completed by combining the two explicit bounds on $\mathcal{E}_{\rm app}$ and $\mathcal{E}_{\rm stat}$ with Theorem \ref{thm:errordecom}.
\end{proof}

\bibliographystyle{siam}
\bibliography{refer}

\end{document}